\theoremstyle{plain}
\newtheorem{teo}{Theorem}[section]
\newtheorem{prop}[teo]{Proposition}
\newtheorem{lemma}[teo]{Lemma}
\newtheorem{cor}[teo]{Corollary}
\theoremstyle{definition}
\newtheorem{condition}[teo]{Condition}
\newtheorem{question}[teo]{Question}
\theoremstyle{remark}
\newtheorem{remark}[teo]{Remark}
\newtheorem{claim}[teo]{Claim}
\numberwithin{equation}{section}
\newcommand{\A}{\mathcal{A}}
\newcommand{\B}{\mathcal{B}}
\newcommand{\C}{\mathbb{C}}
\newcommand{\Cheart}{\mathcal{C}}
\newcommand{\D}{\mathcal{D}}
\newcommand{\DC}{\mathrm{D}}
\newcommand{\Ftors}{\mathcal{F}}
\newcommand{\HC}{\mathcal{H}}
\renewcommand{\O}{\mathscr{O}}
\renewcommand{\P}{\mathbb{P}}
\newcommand{\Pslicing}{\mathcal{P}}
\newcommand{\Q}{\mathbb{Q}}
\newcommand{\R}{\mathbb{R}}
\newcommand{\Ttors}{\mathcal{T}}
\newcommand{\Z}{\mathbb{Z}}
\let\Re\relax
\let\Im\relax
\DeclareMathOperator{\Amp}{Amp}
\DeclareMathOperator{\ch}{ch}
\DeclareMathOperator{\Coh}{Coh}
\DeclareMathOperator{\Exc}{Exc}
\DeclareMathOperator{\Ext}{Ext}
\DeclareMathOperator{\Hom}{Hom}
\DeclareMathOperator{\Im}{Im}
\DeclareMathOperator{\Nef}{Nef}
\DeclareMathOperator{\NS}{NS}
\DeclareMathOperator{\Re}{Re}
\DeclareMathOperator{\Stab}{Stab}
\DeclareMathOperator{\Supp}{Supp}
\newcommand{\EXT}{\mathcal{E}\kern -.5pt xt}
\newcommand{\HOM}{\mathcal{H}\kern -.5pt om}
\newcommand{\abs}[1]{\left\lvert#1\right\rvert}
\newcommand{\norm}[1]{\left\lVert#1\right\rVert}
\begin{document}

\title{Stability conditions on surfaces and contractions of curves}
\author{Nicolás Vilches}
\address{Department of Mathematics, Columbia University, 2990 Broadway, New York, NY 10027, USA}
\email{nivilches@math.columbia.edu}
\begin{abstract}
We study Bridgeland stability conditions on smooth surfaces arising from birational morphisms $S \to T$ to a singular surface. Assuming that $T$ has only ADE singularities or certain cyclic quotient singularities, we produce pre-stability conditions on $S$ whose central charges depend on the pullback of an ample divisor on $T$. Moreover, we prove the support property for the cyclic quotient case (including the $A_n$ singularities). These stability conditions arise as limits of the Arcara--Bertram stability conditions on $S$.

In a complementary direction, we study birational maps $S \to T$ for which a stability condition $S$ can be obtained using the pullback of an ample class on $T$. We prove that the morphism cannot contract smooth curves of positive genus. 
\end{abstract}

\maketitle

\tableofcontents

\section{Introduction}

Let $S$ be a smooth, projective surface over the complex numbers. The existence of stability conditions on $\DC^b(S)$ was settled by \cite{Bri08} for K3 surfaces, and \cite{AB13} in full generality. More precisely, we have the following theorem.

\begin{teo}[Arcara--Bertram, \cite{AB13}] \label{teo:intro_AB}
Let $S$ be a smooth, projective surface, and let $\beta \in \NS(S)_\R$, $\omega \in \Amp(S)_\R$ be two numerical classes. Then there exists a Bridgeland stability condition $\sigma_{\beta, \omega} = (\A_{\beta, \omega}, Z_{\beta, \omega})$ on $\DC^b(S)$ whose central charge $Z_{\beta, \omega}$ is given by the expression
\begin{equation} \label{eq:intro_ABcentral}
Z_{\beta, \omega}(E) = -\ch_2^\beta(E) +\frac{\omega^2}{2}\ch_0(E) + i\omega.\ch_1^\beta(E).
\end{equation}
\end{teo}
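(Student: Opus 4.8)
The plan is to realize $\sigma_{\beta,\omega}$ as a tilt of the standard heart $\Coh(S)$ and to check Bridgeland's axioms in turn. First I would introduce the twisted slope $\mu_{\beta,\omega}(E) = \omega.\ch_1^\beta(E)/\ch_0(E)$, with the convention that $\mu_{\beta,\omega} = +\infty$ on torsion sheaves; this is simply $\omega$-twisted Mumford slope, so classical Mumford--Takemoto theory provides Harder--Narasimhan filtrations for $\mu_{\beta,\omega}$ on $\Coh(S)$. I can then form the torsion pair $(\Ttors, \Ftors)$, where $\Ttors$ is generated by the torsion sheaves together with the $\mu_{\beta,\omega}$-semistable sheaves of positive slope, and $\Ftors$ consists of the $\mu_{\beta,\omega}$-semistable sheaves of slope $\le 0$. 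The candidate heart is the tilt $\A_{\beta,\omega} = \langle \Ftors[1], \Ttors\rangle$, equivalently the full subcategory of $\DC^b(S)$ of complexes $E$ with $H^{-1}(E) \in \Ftors$, $H^0(E) \in \Ttors$, and all other cohomology vanishing; this is an abelian category by the general theory of tilting at a torsion pair.

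The first substantive step is the stability-function (positivity) property, namely that every nonzero $E \in \A_{\beta,\omega}$ has either $\Im Z_{\beta,\omega}(E) > 0$, or $\Im Z_{\beta,\omega}(E) = 0$ and $\Re Z_{\beta,\omega}(E) < 0$. Writing $Z = Z_{\beta,\omega}$ and using additivity $Z(E) = Z(H^0(E)) - Z(H^{-1}(E))$, the inequality $\Im Z(E) = \omega.\ch_1^\beta(E) \ge 0$ is immediate from the definition of $(\Ttors, \Ftors)$. The delicate case is $\Im Z(E) = 0$, which forces $\Im Z(H^0(E)) = \Im Z(H^{-1}(E)) = 0$: then $H^0(E)$ is a zero-dimensional sheaf, contributing $\Re Z = -\ch_2 < 0$, while $H^{-1}(E)$ is built from $\mu_{\beta,\omega}$-semistable sheaves $F$ of slope exactly $0$. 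For each such $F$ the Hodge index theorem gives $(\ch_1^\beta(F))^2 \le 0$ (since $\omega.\ch_1^\beta(F) = 0$ and $\omega$ is ample), and the Bogomolov inequality $(\ch_1^\beta(F))^2 \ge 2\ch_0(F)\ch_2^\beta(F)$ then yields $\ch_2^\beta(F) \le 0$, so $\Re Z(F) \ge \tfrac{\omega^2}{2}\ch_0(F) > 0$. Hence $\Re Z(E) < 0$, as needed. This interplay of Bogomolov and Hodge index is the computational heart of the theorem.

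The second substantive step, which I expect to be the main obstacle, is the existence of Harder--Narasimhan filtrations in $\A_{\beta,\omega}$ for the phase of $Z$. Since $\beta$ and $\omega$ are real, the image of $Z$ on the numerical Grothendieck group need not be discrete, so the usual appeal to discreteness of $\Im Z$ is unavailable. I would instead first prove that $\A_{\beta,\omega}$ is a Noetherian abelian category, deducing it from Noetherianity of $\Coh(S)$ together with a careful analysis of how subobjects behave across the tilt; this excludes infinite ascending chains of would-be destabilizing subobjects. To exclude infinite descending chains I would use the boundedness of the discriminant $\ch_1^2 - 2\ch_0\ch_2$ on families of semistable objects, again supplied by Bogomolov, to run Bridgeland's HN existence argument.

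Finally, to promote the resulting pre-stability condition to a full Bridgeland stability condition I would verify the support property by exhibiting the discriminant $\Delta = \ch_1^2 - 2\ch_0\ch_2$ as a quadratic form that is negative definite on $\ker Z_{\beta,\omega}$ and nonnegative on all $Z$-semistable objects --- once more a repackaging of the Bogomolov inequality and the Hodge index theorem. This also yields local finiteness and the good deformation behavior in the parameters $(\beta, \omega)$.
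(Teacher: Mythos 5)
Your tilting construction and your positivity check are sound, and they coincide with the standard Arcara--Bertram route that the paper recalls in Subsection \ref{subsec:AB}: tilt $\Coh(S)$ at the torsion pair cut out by slope $\beta.\omega$, and verify that $Z_{\beta,\omega}$ is a stability function on the tilt via Bogomolov plus the Hodge index theorem. The genuine gap is in your last step, the support property. The discriminant $\Delta = \ch_1^2 - 2\ch_0\ch_2$ is indeed negative definite on $\ker Z_{\beta,\omega}$ (your Hodge index argument there is correct), but it is \emph{not} nonnegative on every $\sigma_{\beta,\omega}$-semistable object: Bogomolov gives $\Delta \geq 0$ only for slope-semistable torsion-free \emph{sheaves}, whereas Bridgeland-semistable objects include pure one-dimensional sheaves supported on curves of negative self-intersection. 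If $C \subset S$ is a curve with $C^2 < 0$, then $\O_C(k)$ is $\sigma_{\beta,\omega}$-(semi)stable for suitable parameters (for instance in the large volume limit $\omega \mapsto s\omega$, $s \gg 0$; these are exactly the objects driving the analysis of this paper, cf. Corollary \ref{cor:idheart_sheavesonP1} and Proposition \ref{prop:prestab_critstable}, and on a K3 surface they are the spherical objects of \cite{Bri08}), yet $\Delta(\O_C(k)) = C^2 < 0$. This failure is precisely why the quadratic form recalled from \cite{MS17}*{6.13} in Subsection \ref{subsec:AB} carries the correction term $\frac{C_\omega}{\omega^2}(\ch_1^\beta(E).\omega)^2$ with $C_\omega$ subject to \eqref{eq:AB_valueomega}: that term vanishes on $\ker Z_{\beta,\omega}$, so negative definiteness survives, while for $E = \O_C(k)$ it contributes at least $-C^2$ and restores nonnegativity. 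Moreover, even with the corrected form, nonnegativity on semistable objects is not ``a repackaging of Bogomolov'': semistable objects are complexes, and the known proof is an induction that starts from the weak inequality of Lemma \ref{lemma:AB_presupport} and uses the wall-and-chamber structure along the rays $\{\sigma_{\beta, s\omega}\}_{s>0}$ to reduce to a limit where semistable objects of a fixed class become sheaves.

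A secondary, smaller gap is your treatment of the Harder--Narasimhan property for irrational $(\beta,\omega)$, which is a gesture rather than an argument. Noetherianity of the heart rules out infinite sequences of quotients (their kernels form an ascending chain), but the remaining chain condition---no infinite descending chain of subobjects with strictly increasing phases---does not follow from ``boundedness of the discriminant'' as stated, and it is exactly the hard point. The route consistent with the paper (and with \cite{AB13}, \cite{MS17}) is to prove the theorem first for rational $\beta, \omega$, where the image of $\Im Z_{\beta,\omega}$ is discrete so that Noetherianity suffices, and then to extend to real classes by continuity of $(\beta,\omega) \mapsto \sigma_{\beta,\omega}$ together with Bridgeland's deformation theorem---which is itself available only after the (corrected) support property has been established.
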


Crucially, all these stability conditions fit lie in the so-called \emph{geometric chamber} of $\Stab(S)$: the open subset of $\Stab(S)$ where skyscraper sheaves are stable objects of the same phase. Understanding the boundary of this chamber is not an easy task, pioneered by \cite{Bri08} to describe the stability manifold for K3 surfaces. This motivates the following leading question.

\begin{question} \label{question:intro_main}
Describe the closure in $\Stab(S)$ of the set of stability conditions
\[ \{\sigma_{\beta, \omega} : \beta \in \NS(S)_\R, \, \omega \in \Amp(S)_\R\} \]
determined by the Arcara--Bertram construction.
\end{question}

By continuity, such a stability condition will have a central charge of the form $Z_{\beta, \lambda}$ as in \eqref{eq:intro_ABcentral}, where $\beta \in \NS(S)_\R$ and $\lambda \in \Nef(S)_\R$. We will denote these stability conditions by $\overline{\sigma}_{\beta, \lambda}$, to keep in mind that these are obtained by a limiting process (and not by Theorem \ref{teo:intro_AB}).

This question has been already addressed for some big and nef divisors $\lambda=f^\ast \eta$, where $f\colon S \to T$ is a birational map to a projective, normal surface $T$. In \cite{Tod13}*{Theorem 1.2} it was shown the existence of a stability condition $\overline{\sigma}_{0, f^\ast \eta}$, where $f$ is a blow-down of a $(-1)$-curve (and so $T$ is smooth). See also \cite{Tod14}*{Theorem 1.3}, where a related result is proven when $f$ is a composition of blow-ups.

Later on, \cite{TX22}*{Theorem 5.4} showed their existence by replacing $f$ with the contraction of a $(-n)$-curve. More precisely, they produced stability conditions $\overline{\sigma}_{\beta, f^\ast \eta}$, where $f\colon S \to T$ contracts a smooth, rational curve of self-intersection $-n$, and $\beta.C+n/2$ is not an integer. It is worth noting that this restriction on $\beta$ is related to the sheaves $\O_C(k)$ on $C$.

When $S$ is a K3 surface containing $(-2)$-curves, the results of \cite{Bri08} show that stability conditions of the form $\overline{\sigma}_{\beta, f^\ast\eta}$ exist, where $f\colon S \to T$ is crepant. Bridgeland's result also includes a restriction on $\beta$, related to the fact that if $C \subset S$ is a $(-2)$-curve, then the sheaves $\O_C(k)$ are spherical objects. 

Lastly, Question \ref{question:intro_main} has also been addressed when $f\colon S \to T$ is the minimal resolution of a single ADE singularity by the very recent \cite{Cho24}. In loc. cit., it is also shown that the stability conditions $\overline{\sigma}_{\beta, f^\ast\eta}$ are directly related to stability conditions on the \emph{singular} surface $T$. We point out that our work was done independently of Chou's article.

Our main result produces many more examples of stability conditions in the direction of Question \ref{question:intro_main}. On one hand, we will allow the contraction of various chains of rational curves. On the other hand, we will allow contractions of disjoint chains simultaneously.

\begin{teo} \label{teo:intro_existence}
Let $f\colon S \to T$ be a birational morphism from a smooth, projective surface $S$ to a normal surface $T$. Assume that each connected component $C$ of $\Exc(f)$ falls into one of the following types:
\begin{itemize}
\item (ADE) $C$ is the exceptional divisor of the minimal resolution of an ADE singularity.

\item (Chain) $C=C_1 \cup \dots \cup C_r$ is a chain of rational curves, with $C_i^2 + k <0$, where $k$ is the number of curves intersecting $C_i$.
\end{itemize}

Let $\beta \in \NS(S)_\Q$ be general. Then, for any $\eta \in \Amp(T)_\Q$, there is a pre-stability condition $\overline{\sigma}_{\beta, f^\ast\eta}$ on $S$, whose central charge is $Z_{\beta, f^\ast\eta}$. 

Moreover, assume that each component of $\Exc(f)$ is a chain. Then there is an open subset of $\NS(S)_\R$ such that for $\beta$ in this open subset $\overline{\sigma}_{\beta, f^\ast\eta}$ satisfies the support property. We also have that the $\overline{\sigma}_{\beta, f^\ast\eta}$ produced in this fashion are the limit (in the topology of $\Stab(S)$) of the Arcara--Bertram stability conditions $\sigma_{\beta, \omega}$ as $\omega$ approaches $f^\ast \eta$.
\end{teo}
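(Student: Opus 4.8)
The plan is to construct $\overline{\sigma}_{\beta, f^\ast\eta}$ directly as a tilted heart equipped with $Z_{\beta, f^\ast\eta}$, and then to identify it with the limit of the Arcara--Bertram conditions $\sigma_{\beta, \omega_t}$ along a ray $\omega_t = f^\ast\eta + tH$ with $H \in \Amp(S)_\R$ and $t \to 0^+$. Since $\eta$ is ample on $T$ and $f$ is birational, $f^\ast\eta$ is big and nef, so Mumford slope-stability with respect to $f^\ast\eta$ still admits Harder--Narasimhan filtrations; tilting $\Coh(S)$ at slope zero produces the usual heart and the central charge of \eqref{eq:intro_ABcentral}. The one genuinely new feature is that $f^\ast\eta.C_i = 0$ for every component $C_i$ of $\Exc(f)$, so $\Im Z_{\beta, f^\ast\eta}$ vanishes identically on objects supported on $\Exc(f)$; for such an object $E$ the stability-function axiom collapses to the requirement $\ch_2^\beta(E) > 0$, and the naive tilt fails this because a sheaf $\O_{C_i}(k)$ with $k \ll 0$ has $\ch_2^\beta < 0$, landing on the positive real axis.

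First I would repair the heart. Writing $\ch_2^\beta = \ch_2 - \beta.\ch_1 + \tfrac12\beta^2\ch_0$, the numbers $\ch_2^\beta(\O_{C_i}(k))$ depend affinely on $\beta$, and the genericity hypothesis is precisely that none of them vanishes. I would then perform an additional tilt inside the Serre subcategory of $\Exc(f)$-supported objects, shifting by $[1]$ exactly the simple pieces with $\ch_2^\beta < 0$; because $\Im Z$ is identically zero there, this tilt glues with the rest of the heart and only permutes phase-one objects. The content of this step is to check that, after the shift, \emph{every} nonzero $\Exc(f)$-supported object of the new heart has $\ch_2^\beta > 0$; here the chain/ADE combinatorics enters, as one reduces to a finite list of minimal stable pieces built from the $\O_{C_i}(k)$ with $k$ in a bounded range read off from the dual graph, and verifies positivity for these generators. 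I expect this bookkeeping to be the most laborious part of the existence statement. The Harder--Narasimhan property then follows from the standard argument, since $\Im Z_{\beta, f^\ast\eta} = f^\ast\eta.\ch_1 \in \Q$ takes discrete values and the heart is Noetherian, so no infinite filtration can develop; this already gives the pre-stability condition in both the chain and the ADE cases.

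The decisive step, which also yields the limit statement, is the \emph{support property} in the chain case. Let $\Delta = \ch_1^2 - 2\ch_0\ch_2$ be the Bogomolov form. A direct computation shows $\Delta$ is already negative definite on $\ker Z_{\beta, f^\ast\eta}$: for $\ch_0 = 0$ the kernel forces $\ch_1 \in (f^\ast\eta)^\perp$ and $\ch_2^\beta = 0$, so $\Delta = \ch_1^2 < 0$ because $(f^\ast\eta)^2 > 0$ makes the intersection form negative definite on $(f^\ast\eta)^\perp$ by the Hodge index theorem; for $\ch_0 \neq 0$ the kernel forces $\ch_1^\beta \in (f^\ast\eta)^\perp$ together with $\ch_2^\beta = \tfrac12(f^\ast\eta)^2\ch_0$, whence $\Delta = (\ch_1^\beta)^2 - (f^\ast\eta)^2\ch_0^2 < 0$. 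The real obstruction is instead nonnegativity on semistable objects: the exceptional objects $\O_{C_i}(k)$ are semistable with $\Delta = C_i^2 < 0$. A first correction $Q = \Delta + b\,(\Re Z_{\beta, f^\ast\eta})^2$, with $b$ large, vanishes on $\ker Z$ and is strictly positive on each $\O_{C_i}(k)$ (using $\ch_2^\beta \neq 0$ from genericity, together with discreteness of the values of $\Re Z$), so it restores $Q \geq 0$ there while $Q = \Delta \geq 0$ persists on the Mumford-semistable bulk. In general, however, one is forced to refine this into a curve-by-curve correction adapted to the dual graph, and the signs of its coefficients are governed exactly by the quantities $C_i^2 + k$: the hypothesis $C_i^2 + k < 0$ is what makes the refined $Q$ simultaneously nonnegative on all semistable classes and negative definite on the degenerate kernel. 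Constructing this $Q$ is the technical crux, and it is where the chain hypothesis is indispensable; for branch points of $D$- and $E$-type configurations (and interior curves of long $A$-type chains) one has $C_i^2 + k \geq 0$, the needed sign fails, and only the pre-stability statement survives, in line with the richer rigidity of the spherical objects present there.

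Granting $Q$, the final step is formal. The locus in $\Stab(S)$ of stability conditions satisfying the support property with respect to the fixed $Q$ is closed, and the assignment of central charges restricts to a local homeomorphism on it. Since all $\sigma_{\beta, \omega_t}$ lie in this locus (the same $Q$ serves as a support form for small $t$ by continuity of the bound) and their central charges converge to $Z_{\beta, f^\ast\eta}$, the path lifts to a convergent path in $\Stab(S)$; its limit must be the pre-stability condition built in the first two paragraphs, which therefore satisfies the support property and equals $\lim_{t \to 0}\sigma_{\beta, \omega_t}$. The main obstacle throughout is the construction of $Q$: balancing nonnegativity on the semistable exceptional objects against strict negativity on the kernel, through a correction whose definiteness is controlled by $C_i^2 + k$, is the heart of the argument and the precise point at which the chain condition is used.
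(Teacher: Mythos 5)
Your outline takes a genuinely different route from the paper for the heart --- a double tilt in the style of Toda and Tramel--Xia, whereas the paper observes (Lemma \ref{lemma:heart_indep}) that $\Re Z_{\beta,\omega}$ depends only on $\beta$ and $\omega^2$, so the skewed heart $\B_{\beta,V}=\Pslicing_{\beta,\omega}((-1/2,1/2])$ is already shared by all Arcara--Bertram conditions with $\omega^2=V$ and no new tilt is needed. That difference would be fine, but both steps you defer as ``bookkeeping'' are the actual mathematical content, and neither is routine. For pre-stability, you say the positivity check ``reduces to a finite list of minimal stable pieces built from the $\O_{C_i}(k)$'' --- but establishing that the minimal objects form such a list is precisely the hard classification (the paper's Proposition \ref{prop:prestab_critstable}). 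A priori there are stable pure sheaves supported on $\Exc(f)$ with arbitrarily large multiplicities $\ch_1=\sum a_i C_i$ and with non-reduced scheme-theoretic support, and genericity of $\beta$ only kills a prescribed countable list of values of $\ch_2^\beta$. One must prove that simplicity forces $\ch_1(E)^2=-2$, hence $\ch_1(E)$ is the fundamental cycle (ADE case), and that stability forces $E$ to be a line bundle on the \emph{reduced} chain (chain case, Proposition \ref{prop:chain_main}, resting on the Drozd--Greuel theorem); without these statements the sign of $\ch_2^\beta$ cannot be controlled on the whole heart, no matter which finite or countable set of conditions ``general $\beta$'' encodes.

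The larger gap is the support property: you grant the corrected form $Q$ and call the rest formal, but constructing $Q$ \emph{is} the theorem, and two assertions you lean on fail. First, the parenthetical claim that ``the same $Q$ serves as a support form for $\sigma_{\beta,\omega_t}$ for small $t$ by continuity of the bound'' is circular: the Arcara--Bertram form \eqref{eq:AB_fullsupport} carries a constant $C_\omega$ forced by \eqref{eq:AB_valueomega}, and as $\omega_t\to f^\ast\eta$ the exceptional curves give $\omega_t.C_{i,j}\to 0$ with $C_{i,j}^2<0$, so $C_{\omega_t}\to\infty$; no fixed form of that shape works along the path, which is exactly why the boundary case is hard. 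Second, the objects that defeat your first correction $\Delta+b\,(\Re Z)^2$ are not the $\O_{C_i}(k)$ (those are easy) but the semistable objects of Lemma \ref{lemma:bridgie_main}, forms (F1) and (F5), which mix a positive-rank $f^\ast\eta$-semistable part with torsion supported on $\Exc(f)$: the exceptional multiplicities $b_{i,j}$ of the torsion part enter $\Delta$ through $\ch_1(E_{tors})^2$ and cross terms, and nothing in your form bounds them. The whole of Subsection \ref{subsec:bounds} exists to bound these $b_{i,j}$ (Lemma \ref{lemma:bounds_ch1}, Corollary \ref{cor:bounds_F125}) via Hom-vanishing against the chain line bundles $\O_{C_{i,j,j'}}(k_j,\dots,k_{j'})$, and this is where an \emph{additional} open restriction on $\beta$ enters (Condition \ref{cond:bounds_smallkij}) --- which is why the theorem asserts support only for $\beta$ in an open subset of $\NS(S)_\R$, a restriction your proposal never anticipates. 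Your closing suggestion that the correction's coefficients are ``governed exactly by $C_i^2+k$'' is a plausible guess about where the chain hypothesis enters, but it is not a construction, and the paper's actual mechanism (the bound \eqref{eq:bounds_presmallkij} and its companion from Condition \ref{cond:bounds_smallkij}) is of a different nature.
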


Let us point out that the generality condition on $\beta$ can be made explicit, although in a cumbersome way. For instance, if $\Exc(f)$ contracts only one chain of rational curves $\{C_1, \dots, C_r\}$, the condition can be stated as
\[ \sum_{i=a}^b \left( \beta.C_i + \frac{C_i^2}{2} \right) \notin \Z, \qquad \forall 1 \leq a \leq b \leq r, \]
We will precisely describe the restriction on $\beta$ in Condition \ref{cond:prestab_beta}. These restrictions are related the fact that given any chain of curves $C_1 \cup \dots \cup C_r$ and integers $d_1, \dots, d_r$, we have a line bundle $\O(d_1, \dots, d_r)$ supported on this chain, cf. \cite{DG01}*{2.7}. The assumption on $\beta$ ensures that $\ch_2^\beta(\O(d_1, \dots, d_r))$ is never zero.

Let us briefly compare our strategy to the previous works \citelist{\cite{Tod13} \cite{TX22}}. In their case, the most delicate step is to construct an appropriate heart for the stability conditions. This is obtained by a double-tilt argument: first tilt as in \cite{AB13}, and then tilt an appropriate collection of objects supported on the exceptional locus on $f\colon S \to T$. 

Our approach instead is based on the following observation (cf. Lemma \ref{lemma:heart_indep}): the real part of $Z_{\beta, \omega}$ depends only on $\beta$ and $\omega^2$, and \emph{not} on $\omega$ by itself. This will allow us to construct a skewed\footnote{See the discussion in Remark \ref{remark:basicbrid_heart}.} heart for our stability conditions without effort. Using this heart, we prove in Proposition \ref{prop:cc_crit} a criterion to verify whether $Z_{\beta, f^\ast\eta}$ defines a central charge on this heart. The condition roughly translates to proving that $Z_{\beta, f^\ast\eta}(E)\neq 0$, if $E$ is in the heart.

The next step is to describe the possible objects $E$ in the heart satisfying that $Z_{\beta, f^\ast\eta}(E)=0$. Assuming only that $f\colon S \to T$ is birational, we can assume that $E$ is $\sigma_{\beta, \omega}$-stable (with respect to an auxiliary ample $\omega$), whose support is contained in a connected component of $\Exc(f)$. By dealing with the ADE and chain cases separately, we conclude the existence of the pre-stability conditions. 

At last, the support property requires a careful work with inequalities. The strategy mostly follow the ideas of \cite{Tod13}, adapting the arguments to deal with chains of rational curves.

As we mentioned above, one of the key differences of our approach is that we identify the heart of the stability conditions $\overline{\sigma}_{\beta, f^\ast\eta}$ directly from the hearts of $\sigma_{\beta, \omega}$. This allows us to address Question \ref{question:intro_main} in a negative direction, giving certain restrictions for $\lambda \in \Nef(S)_\R$, as the following result shows.

\begin{teo} \label{teo:intro_nonexistence}
Let $S$ be a smooth, projective surface. Assume that there is a birational morphism $f\colon S \to T$ to a normal, projective surface, an ample class $\eta \in \Amp(T)_\R$, and a numerical class $\beta \in \NS(T)_\R$ such that a stability condition $\overline{\sigma}_{\beta, f^\ast\eta}$ exists as a limit (in the topology of $\Stab(S)$) of the Arcara--Bertram stability conditions $\sigma_{\beta', \omega}$. Then, no smooth curve of positive genus can be contained in the exceptional locus of $f$.
\end{teo}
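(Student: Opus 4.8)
The plan is to derive a contradiction from the support property of $\overline{\sigma}_{\beta, f^\ast\eta}$ by exhibiting a sequence of semistable objects whose classes have bounded central charge but unbounded norm; the existence of such objects is exactly what positive genus provides. Suppose, for contradiction, that a smooth curve $C$ of genus $g \geq 1$ lies in $\Exc(f)$, write $\lambda = f^\ast\eta$, and let $i\colon C \hookrightarrow S$ denote the inclusion. Since $C$ is contracted, $\lambda.C = f^\ast\eta.C = \eta.(f_\ast C) = 0$. For a vector bundle $V$ on $C$ of rank $r$ and degree $\delta$, I would first compute, via Grothendieck--Riemann--Roch and adjunction, that $\ch(i_\ast V) = (0,\, rC,\, \delta - rC^2/2)$, whence
\[ Z_{\beta, f^\ast\eta}(i_\ast V) = -\ch_2^\beta(i_\ast V) + i\,(f^\ast\eta).\ch_1^\beta(i_\ast V) = rc - \delta, \qquad c := \tfrac{C^2}{2} + \beta.C, \]
which is a \emph{real} number precisely because $\lambda.C = 0$. (When $\beta$ is a class on $T$ one interprets it as $f^\ast\beta$, so that $\beta.C = 0$; the argument only uses that $c$ is a fixed real number.) Thus every object supported on $C$ has real central charge.

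The genus enters through the following choice. Because $g \geq 1$, the curve $C$ carries stable vector bundles of arbitrarily large rank, and by a Dirichlet-type approximation I can select a sequence $V_n$ of stable bundles with $r_n = \mathrm{rank}(V_n) \to \infty$ and $\delta_n = \deg(V_n)$ satisfying $0 < \abs{r_n c - \delta_n} \leq 1$: for $g \geq 2$ stable bundles of every rank and degree exist, while for $g = 1$ one takes $\delta_n/r_n$ to run through the convergents of $c$, which are automatically coprime, guaranteeing stability. Then $\abs{Z_{\beta, f^\ast\eta}(i_\ast V_n)} \leq 1$ stays bounded, whereas the numerical class $\ch(i_\ast V_n) = (0, r_n C, \ast)$ has norm growing like $r_n \to \infty$, since $[C] \neq 0$ in the numerical Grothendieck group. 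This is exactly the configuration forbidden by the support property --- \emph{provided} each $i_\ast V_n$ is semistable.

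It therefore remains to prove that $i_\ast V_n$ is $\overline{\sigma}_{\beta, f^\ast\eta}$-semistable, and this is the main obstacle. The strategy has two steps. First, I would show that $i_\ast V_n$ is $\sigma_{\beta', \omega}$-semistable for ample $\omega$ close to $\lambda$: for objects set-theoretically supported on $C$ the reduced Arcara--Bertram slope $\ch_2^\beta/(\omega.\ch_1)$ depends only on $\delta/r$, so it agrees, up to the positive scalar $\omega.C$, with ordinary slope stability on $C$, which holds by construction of $V_n$. The delicate part is ruling out destabilizing subobjects living in the tilted heart that are \emph{not} supported on $C$; this becomes subtle exactly because $\omega.C \to 0$, and here I would exploit that $\lambda = f^\ast\eta$ is strictly positive away from $\Exc(f)$ to confine any potential destabilizer to the contracted locus. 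Second, I would pass to the limit using that the locus in $\Stab(S)$ on which a fixed object is semistable is closed (the mass $m_\sigma(i_\ast V_n)$ and $\abs{Z_\sigma(i_\ast V_n)}$ are continuous in $\sigma$, and semistability is their equality). Since $\overline{\sigma}_{\beta, f^\ast\eta}$ is by hypothesis a limit of the $\sigma_{\beta', \omega}$, semistability of $i_\ast V_n$ survives in the limit, and the support property is contradicted.

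As a consistency check, this mechanism cannot fire when $C \cong \P^1$, in accordance with Theorem \ref{teo:intro_existence}: every bundle on $\P^1$ splits, so a semistable bundle is of the form $\O_{\P^1}(d)^{\oplus r}$ with integral slope $d$, and $\abs{rc - \delta} = r\abs{c - d} \geq r\cdot\mathrm{dist}(c, \Z)$ diverges once $c \notin \Z$ --- which is guaranteed by the genericity of $\beta$. It is only the higher-rank stable bundles available in positive genus, whose slopes approximate the non-integral value $c$, that yield the semistable objects needed to break the support property.
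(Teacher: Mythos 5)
Your reduction to objects supported on $C$, the Grothendieck--Riemann--Roch computation $Z_{\beta,f^\ast\eta}(i_\ast V)=rc-\delta$ with $c=\beta.C+C^2/2$, and the use of stable bundles of slope approximating $c$ (Atiyah/convergents for $g=1$) all match the paper's setup, and your $\P^1$ consistency check correctly mirrors Condition \ref{cond:prestab_beta}. But the step you yourself flag as ``the main obstacle'' --- proving that $i_\ast V_n$ is semistable for $\sigma_{\beta',\omega}$ with $\omega$ near $f^\ast\eta$, hence at the limit --- is a genuine gap, and it is the entire difficulty of the problem. Your argument that destabilizers can be ``confined to the contracted locus'' does not work: subobjects of a torsion sheaf in the tilted heart $\A_{\beta',\omega}$ need not be torsion, let alone supported on $C$. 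A subobject $A\hookrightarrow i_\ast V_n$ in the heart is a sheaf of arbitrary rank whose map to $i_\ast V_n$ has kernel in $\Ftors_{\beta',\omega}$ (for instance $\O_S\hookrightarrow \O_C$ with quotient $\O_S(-C)[1]$, whenever $\O_S\in\Ttors_{\beta',\omega}$). Moreover, since you arranged $Z(i_\ast V_n)$ to be real and small, the phase of $i_\ast V_n$ degenerates to $0$ or $1$ as $\omega\to f^\ast\eta$ --- exactly the boundary values where positive-rank subobjects with phases in the interior of $(0,1)$ can interpose, and where the phase comparison becomes a second-order question in $\omega-f^\ast\eta$ about which positivity of $f^\ast\eta$ away from $\Exc(f)$ says nothing. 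This is not a hypothetical worry: whether $i_\ast V_n$ or $i_\ast V_n[1]$ even lies in the limit heart depends on the sign of $r_nc-\delta_n$ (Corollary \ref{cor:idheart_sheavesonP1}), and the delicate wall analysis near such limits is precisely the content of \cite{Tod13}, \cite{TX22}, and Sections \ref{sec:prestab}--\ref{sec:supp} of this paper.

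The paper's proof is designed to bypass exactly this issue: it never proves any semistability of the pushforward, and it contradicts the central charge axiom rather than the support property. After rationalizing $\beta,\eta$ via Bridgeland deformation, it takes a \emph{single} stable bundle $E$ on $C$ with slope exactly $d/r=\beta.C+C^2/2$ (coprimality of $d,r$ supplies existence for $g=1$, the same input as your convergents), so that $Z_{\beta,f^\ast\eta}(i_\ast E)=0$. It then invokes the structural fact --- established in Section \ref{sec:constrheart} from the convergence hypothesis alone, via stability of skyscrapers in the limit and Corollary \ref{cor:idheart_torspair} --- that the limit heart is a tilt of $\Coh(S)$ along a torsion pair $(\Ttors,\Ftors)$, and decomposes $0\to i_\ast Q\to i_\ast E\to i_\ast F\to 0$. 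Stability of $E$ on $C$ forces $Z_{\beta,f^\ast\eta}(i_\ast Q)\geq 0$ (real), which is incompatible with $i_\ast Q$ being a nonzero object of $\Ttors\subset\overline{\Pslicing}_{\beta,f^\ast\eta}((0,1])$, and symmetrically for $F$. To repair your proposal you would have to supply a full proof of semistability of $i_\ast V_n$ near the degenerate locus --- essentially redoing the existence analysis of Sections \ref{sec:prestab}--\ref{sec:supp}, which moreover only succeeds under restrictive hypotheses on $\Exc(f)$ and $\beta$ --- whereas the torsion-pair argument needs nothing beyond the heart identification that the convergence hypothesis already provides.
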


In subsequent work we will use the results of Theorem \ref{teo:intro_existence} to describe various moduli spaces of Bridgeland semistable objects with numerical class $[pt]$. This will extend the discussion of \citelist{\cite{Tod13}*{\textsection 3.5} \cite{TX22}*{\textsection 7}} to produce more complicated moduli spaces. This will also include a description of their local structure using differential graded Lie algebras.

\subsection{Structure of the paper}

We will start by reviewing the basics of Bridgeland stability conditions in Section \ref{sec:stabcond}. Then, we will discuss the heart for the stability conditions in Section \ref{sec:constrheart}. This will give us immediately the proof of Theorem \ref{teo:intro_nonexistence} in Subsection \ref{subsec:pfnonexistence}.

The next two sections will be devoted to the proof of Theorem \ref{teo:intro_existence}. We will first prove the existence of pre-stability conditions in Section \ref{sec:prestab}. After that, we will show the support property in Section \ref{sec:supp}.

\subsection{Conventions}

We will work over the complex numbers. Given a normal projective surface $T$, and two curves $C, D$ in $T$, we let $C.D \in \Q$ to be Mumford's intersection product (\cite{Mum61}*{p. 17}). If $C$ is $\Q$-Cartier, this agrees with $\frac{1}{m} \deg_D \O_S(mC)|_C$, where $m$ is the Cartier index of $C$. In particular, if $T$ has $\Q$-factorial singularities (e.g. $T$ is smooth, or if $T$ only has quotient singularities), this agrees with the usual intersection product. We extended this to all Weil divisors by linearity.

We let $\NS(T)_\Q$ to be the vector space of $\Q$-Weil divisors on $T$, modulo numerical equivalence (with respect to the intersection product above), and similarly for $\NS(T)_\R$. We refer to \cite{Mum61}*{pp. 17--18} for the basic properties.

In particular, if $f\colon S \to T$ is a birational map, then we have a pullback map $f^\ast\colon \NS(T)_\Q \to \NS(S)_\Q$. This preserves the intersection product. The orthogonal complement of the image of $f^\ast$ agrees with the kernel of $f_\ast$; a basis is given by the exceptional curves of $f$ (\cite{Mum61}*{p. 6}). 

Given a smooth, projective surface $S$ and $E \in \DC^b(S)$, we let $\ch(E)$ to be the Chern character of $E$. Given $\beta \in \NS(S)_\R$, we set $\ch^\beta(E) = \ch(E).\exp(-\beta)$. More explicitly, we have 
\begin{align*}
\ch_0^\beta(E) &= \ch_0(E), \\ \ch_1^\beta(E) &= \ch_1(E)-\beta.\ch_0(E), \\ \ch_2^\beta(E) &= \ch_2(E)-\beta.\ch_1(E) + \frac{\beta^2}{2}\ch_0(E).
\end{align*}

If $\A \subset \DC^b(S)$ is the heart of a bounded t-structure on $\DC^b(S)$, we denote by $\HC^i_\A(E) \in \A$ the cohomology objects of an element of $E \in \DC^b(S)$ with respect to $\A$ (or simply $\HC(E)$ is the heart is understood). For the standard heart $\A=\Coh(S)$, we will write $H^i(E)$ instead. 

If $A, B \in \DC^b(S)$, we will denote $\Ext^i(A, B) = \Hom(A, B[i])$. We have composition maps $\Ext^i(B, C) \times \Ext^j(A, B) \to \Ext^{i+j}(A, C)$, which we denote by $\circ$.

\subsection{Acknowledgments}

First and foremost, I would like to thank my PhD advisor, Giulia Saccà. Her expertise and constant support have been a fundamental pillar of this project. 

I would like to thank James Hotchkiss, Johan de Jong, Emanuele Macrì, and Giancarlo Urzúa for various discussions related to this work. Especially, I express my gratitude to Arend Bayer and Tzu-Yang Chou, who informed me of the work \cite{Cho24}.

At last, I am grateful to the Mathematisches Forschungsinstitut Oberwolfach and the organizers of the workshop ``Algebraic Geometry: Wall Crossing and Mo\-du\-li Spaces, Varieties and Derived Categories'', where a preliminary version of this project was presented in July 2024.

\section{Preliminaries on Bridgeland stability conditions} \label{sec:stabcond}

\subsection{Basic definitions} \label{subsec:basicbrid}

We follow \citelist{\cite{Bri07} \cite{Bay19}}. Consider a triangulated category $\D$ together with an homomorphism $v\colon K(\D) \to \Lambda$ to a finite dimensional lattice $\Lambda$. A \emph{Bridgeland stability condition} on $(\D, \Lambda, v)$ (or on $\D$, if $\Lambda, v$ are understood) is given by a pair $\sigma=(Z, \Pslicing)$ described as follows.
\begin{enumerate}[label=(\alph*)]
\item $\Pslicing$ is a \emph{slicing} of $\D$: a collection of full subcategories $\{ \Pslicing(\phi) \}_{\phi \in \R}$ subject to the following relations:
\begin{itemize}
\item For all $\phi$, we have $\Pslicing(\phi+1) = \Pslicing(\phi)[1]$.
\item Given $\phi_1>\phi_2$ and $E_i \in \Pslicing(\phi_i)$, we have $\Hom(E_1, E_2)=0$. 
\item For any $E \in \D$, there is a sequence of maps 
\[ 0=E_0 \xrightarrow{i_0} E_1 \xrightarrow{i_1} \dots \xrightarrow{i_m} E_m=E, \]
and real numbers $\phi_1>\dots > \phi_m$, such that the cone of $i_k$ is in $\Pslicing(\phi_k)$ for each $k$. The cones are called the \emph{Harder--Narasimhan factors} of $E$.
\end{itemize}

\item $Z\colon \Lambda \to \C$ is a $\Z$-linear map, called the \emph{central charge}. 
\end{enumerate}
We write $Z(E)=Z(v(E))$. We impose the following compatibility condition:
\begin{enumerate}[label=(\alph*), resume]
\item For each non-zero $E \in \Pslicing(\phi)$, we have $Z(E) \in \R_{>0}\cdot \exp(i\pi \phi)$.
\end{enumerate}
Properties (a)--(c) define a \emph{pre-stability condition}. The objects of $\D$ that lie in some $\Pslicing(\phi)$ are called \emph{semistable} of \emph{phase} $\phi$. 

At last, we impose the so-called \emph{support property}, for which two equivalent formulations are possible.
\begin{enumerate}[label=(\alph*), resume]
\item There exists a quadratic form $Q$ on $\Lambda_\R$, such that $Q|_{\ker v}$ is negative definite, and $Q(E)\geq 0$ for every $E$ semistable.
\item[(d')] We have that $\abs{Z(E)} \geq \alpha\lVert v(E) \rVert$ for all $E$ semistable, where $\lVert \cdot \rVert$ is some norm in $\Lambda_\R$ and $\alpha>0$.
\end{enumerate}

\begin{remark}[cf. \cite{Bri07}*{5.3}] \label{remark:basicbrid_heart}
Given a pre-stability condition $(Z, \Pslicing)$, the slicing $\Pslicing$ defines the heart of a bounded t-structure $\A=\Pslicing((0, 1])$. This construction can be reversed, and so we can define a pre-stability condition as a pair $(Z, \A)$, where $\A$ is the heart of a bounded t-structure, $Z$ maps $\A$ to the upper half-space, and such that objects in $\A$ admit an appropriate Harder--Narasimhan filtration.

Alternatively, let us fix some value $b \in \R$. We could consider the subcategory $\B=\Pslicing((b, b+1])$ instead, which is also the heart of a bounded t-structure on $\D$. This way, we say that $Z$ is a \emph{skewed} central charge in $\B$, following \cite{Bri07}*{4.4}.
\end{remark}

\subsection{Arcara--Bertram construction} \label{subsec:AB}

Let us briefly recall some details of the construction of the stability conditions $\sigma_{\beta, \omega}$ from Theorem \ref{teo:intro_AB}. We will focus on the situation when $\beta, \omega$ have rational coefficients.

We start by constructing a heart $\A_{\beta, \omega}$ on $\DC^b(S)$, by tilting $\Coh(S)$ with respect to the torsion pair 
\begin{align*}
\Ttors_{\beta, \omega} &= \langle \{ E: E \text{ torsion} \} \cup \{E: E \text{ torsion-free, stable, } \mu(E)>\beta.\omega \} \rangle, \\
\Ftors_{\beta, \omega} &= \langle \{ E: E \text{ torsion-free, stable, }\mu(E)\leq \beta.\omega\} \rangle.
\end{align*}
Here $\langle - \rangle$ denotes the smallest subcategory of $\Coh(S)$ closed under extensions. 

As mentioned in \eqref{eq:intro_ABcentral}, the central charge $Z_{\beta, \omega}$ is given by the expression
\[ Z_{\beta, \omega}(E) = -\ch_2^\beta(E) +\frac{\omega^2}{2}\ch_0(E) + i\omega.\ch_1^\beta(E), \]
and $\Lambda, v$ is given by the Chern character. Here \cite{AB13} shows that $(Z_{\beta, \omega}, \A_{\beta, \omega})=:\sigma_{\beta, \omega}$ is a stability condition on $S$. Moreover, the assignment $(\beta, \omega) \mapsto \sigma_{\beta, \omega}$ extends to a \emph{continuous} map $\Sigma\colon \NS(S)_\R \times \Amp(S)_\R \to \Stab(S)$. We denote by $\Pslicing_{\beta, \omega}$ the slicing of $\DC^b(S)$ induced by the stability condition $\sigma_{\beta, \omega}$.

Another important fact that we will require later has to do with the support property for the Arcara--Bertram construction. This is an involved construction, so we mention some of the key steps. First of all, it is easy to verify the following ``weak'' support property.

\begin{lemma}[\cite{MS17}*{6.13}] \label{lemma:AB_presupport}
Let $E \in \DC^b(S)$ be a $\sigma_{\beta, \omega}$-semistable object. Then
\[ (\ch_1^\beta(E).\omega)^2 \geq 2\omega^2\ch_0(E)\ch_2^\beta(E). \]
\end{lemma}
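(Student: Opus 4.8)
The plan is to show that the stated inequality is the nonnegativity of the quadratic form
\[ \overline{\Delta}_\omega(E) := (\omega.\ch_1^\beta(E))^2 - 2\omega^2\,\ch_0(E)\,\ch_2^\beta(E) \]
on $\sigma_{\beta,\omega}$-semistable objects. The foundation is the classical Bogomolov inequality: a $\mu_\omega$-semistable torsion-free sheaf $F$ satisfies $\Delta(F) := \ch_1(F)^2 - 2\ch_0(F)\ch_2(F) \geq 0$, and $\Delta$ is unchanged if $\ch$ is replaced by $\ch^\beta$. Combining this with the Hodge index inequality $(\omega.D)^2 \geq \omega^2 D^2$, valid for every class $D$ because $\omega$ is ample, gives for such $F$ the chain
\[ (\omega.\ch_1^\beta(F))^2 \geq \omega^2(\ch_1^\beta(F))^2 = \omega^2\Delta(F) + 2\omega^2\ch_0(F)\ch_2^\beta(F) \geq 2\omega^2\ch_0(F)\ch_2^\beta(F), \]
both steps using $\omega^2>0$. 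Hence $\overline{\Delta}_\omega(F)\geq 0$ for $\mu_\omega$-semistable $F$; note also that $\overline{\Delta}_\omega$ is invariant under the shift $F\mapsto F[1]$, and that it reduces to $(\omega.\ch_1^\beta)^2\geq 0$ when $\ch_0=0$.

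First I would reduce to the case of $\sigma_{\beta,\omega}$-stable $E$. A direct computation shows that $\overline{\Delta}_\omega$ has signature $(2,1)$ on the reduced invariants $(\ch_0,\omega.\ch_1^\beta,\ch_2^\beta)$ and is negative definite on $\ker Z_{\beta,\omega}$; the standard bookkeeping for such forms then shows that $\overline{\Delta}_\omega\geq 0$ is inherited by an object from its Jordan--Hölder factors of equal tilt-slope, so it suffices to treat stable $E$. For stable $E$ I would pass to the defining sequence $0 \to \HC^{-1}(E)[1] \to E \to \HC^0(E) \to 0$ in $\A_{\beta,\omega}$, with $\HC^{-1}(E) \in \Ftors_{\beta,\omega}$ and $\HC^0(E) \in \Ttors_{\beta,\omega}$. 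The two pure cases finish immediately: if $\HC^{-1}(E)=0$ then $E$ is a sheaf in $\Ttors_{\beta,\omega}$, either torsion (so $\ch_0=0$ and the bound is trivial) or $\mu_\omega$-semistable by stability, reducing to the computation above; and if $\HC^0(E)=0$ then $E=\HC^{-1}(E)[1]$ with $\HC^{-1}(E)$ a $\mu_\omega$-semistable torsion-free sheaf, handled by the same computation together with shift-invariance.

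The main obstacle is the genuinely mixed case, where both cohomology sheaves are nonzero: the identity $\ch(E)=\ch(\HC^0(E))-\ch(\HC^{-1}(E))$ produces cross terms in $\overline{\Delta}_\omega$ that are not controlled by the two sheaf discriminants alone, and the cohomology sheaves themselves carry the wrong tilt-slopes, so stability of $E$ must genuinely be used. To overcome this I would run a large-volume/deformation argument: replacing $\omega$ by $t\omega$ rescales $\overline{\Delta}_{t\omega}$ by $t^2$, so the sign of the discriminant is independent of $t$, while tilt-semistability of $E$ persists outside a locally finite set of numerical walls. Since $\A_{\beta,\omega}$ is Noetherian and $\omega.\ch_1^\beta$ is discrete and drops strictly for the Jordan--Hölder factors produced at each wall, a descending induction on $\omega.\ch_1^\beta(E)$ reduces every tilt-semistable object, in the limit $t\to\infty$, to shifts of $\mu_\omega$-semistable or torsion sheaves — where the bound is the computation of the first paragraph — and the equal-phase bookkeeping reassembles $\overline{\Delta}_\omega(E)\geq 0$ at each crossed wall. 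Alternatively one can argue directly from the stability inequalities $\nu_{\beta,\omega}(\HC^{-1}(E)[1]) \leq \nu_{\beta,\omega}(E) \leq \nu_{\beta,\omega}(\HC^0(E))$ combined with Hodge-index estimates on each cohomology sheaf, but the large-volume route is cleaner and isolates a single classical input.
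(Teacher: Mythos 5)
The paper does not prove this lemma at all: it is quoted from the cited reference (Macr\`i--Schmidt, Theorem 6.13), and your argument is in essence the proof given there --- classical Bogomolov plus the Hodge index theorem for the sheaf-theoretic base case, the same-phase quadratic-form bookkeeping, and a wall-crossing induction along the large-volume ray $\{\sigma_{\beta,t\omega}\}_{t>0}$ (along which the heart $\A_{\beta,\omega}$ is indeed constant, since rescaling $\omega$ rescales both slopes and the cutoff $\beta.\omega$). Two small imprecisions in that machinery: on the full lattice $\overline{\Delta}_\omega$ is only negative \emph{semi}definite on $\ker Z_{\beta,\omega}$ (classes $(0,D,0)$ with $\omega.D=0$ are isotropic), but the inheritance lemma for same-phase factors needs only semidefiniteness, so this is harmless; and discreteness of $\omega.\ch_1^\beta$ requires the rationality of $\beta,\omega$ that the paper assumes anyway.

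One intermediate claim, however, is false as stated: ``the two pure cases finish immediately \dots $E$ is $\mu_\omega$-semistable by stability.'' At a fixed finite $(\beta,\omega)$, a $\sigma_{\beta,\omega}$-stable sheaf $E \in \Ttors_{\beta,\omega}$ of positive rank need \emph{not} be $\mu_\omega$-semistable. If $F \subset E$ is the maximal slope-destabilizing subsheaf, then $F \in \Ttors_{\beta,\omega}$ (its slope exceeds $\mu_\omega(E) > \beta.\omega$) and $E/F \in \Ttors_{\beta,\omega}$ (the torsion class is closed under quotients), so $0 \to F \to E \to E/F \to 0$ is exact in $\A_{\beta,\omega}$ and $F$ is an honest subobject in the tilted heart; stability of $E$ then forces only the tilt-slope inequality $\nu_{\beta,\omega}(F) < \nu_{\beta,\omega}(E)$, which is perfectly compatible with $\mu_\omega(F) > \mu_\omega(E)$ once $\ch_2^\beta(F)$ is sufficiently negative. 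Slope-destabilization implies tilt-destabilization only asymptotically as $t \to \infty$, where the leading term of the tilt slope is $-t\omega^2\ch_0/(2\,\omega.\ch_1^\beta)$; this is exactly why walls occur along the ray and why tilt-stable, slope-unstable sheaves exist at finite volume. The same objection applies to the case $E = \HC^{-1}(E)[1]$. The flaw is inessential to your proposal, because your large-volume induction, as you formulate it, applies to \emph{every} tilt-semistable object and terminates only in shifts of slope-semistable or torsion sheaves; it therefore subsumes the pure cases, and the shortcut should simply be deleted rather than repaired.
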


As mentioned in \cite{MS17}*{\textsection 6.3}, this inequality allows us to obtain a well-behaved wall and chamber structure for $\{ \sigma_{\beta, s\omega} \}_{s>0}$. This is used to prove the support property in full generality.

\begin{prop}[\cite{MS17}*{6.13}]
Let $\omega \in \Amp(S)$ be ample, and fix a constant $C_\omega>0$ such that
\begin{equation} \label{eq:AB_valueomega}
\frac{C_\omega}{\omega^2} (\omega.D)^2 + D^2 \geq 0
\end{equation}
for any effective divisor $D \subset S$. Then the stability condition $\sigma_{\beta, \omega}$ satisfies the support property with respect to the quadratic form
\begin{equation} \label{eq:AB_fullsupport}
Q(E) = \ch_1^\beta(E)^2 -2\ch_0(E)\ch_2(E) + \frac{C_\omega}{\omega^2} (\ch_1^\beta(E).\omega)^2.
\end{equation}
\end{prop}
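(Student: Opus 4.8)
The plan is to verify the two requirements of the support property for the quadratic form $Q$ of \eqref{eq:AB_fullsupport}: that $Q$ is negative definite on $\ker Z_{\beta,\omega}$, and that $Q(E)\geq 0$ for every $\sigma_{\beta,\omega}$-semistable object $E$. The first is elementary and relies on the Hodge index theorem. The equation $Z_{\beta,\omega}(E)=0$ is equivalent to the two conditions $\omega.\ch_1^\beta(E)=0$ (vanishing imaginary part) and $\ch_2^\beta(E)=\tfrac{\omega^2}{2}\ch_0(E)$ (vanishing real part). On this subspace the last term of $Q$ vanishes, and, since the first two terms assemble into the twist-invariant discriminant $\Delta(E)=\ch_1(E)^2-2\ch_0(E)\ch_2(E)=\ch_1^\beta(E)^2-2\ch_0(E)\ch_2^\beta(E)$, one computes $Q(E)=\ch_1^\beta(E)^2-\omega^2\ch_0(E)^2$. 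As $\ch_1^\beta(E)$ lies in $\omega^\perp$, the Hodge index theorem gives $\ch_1^\beta(E)^2\leq 0$ with equality only when $\ch_1^\beta(E)=0$; combined with $-\omega^2\ch_0(E)^2\leq 0$ this yields $Q(E)\leq 0$, and $Q(E)=0$ forces $\ch_1^\beta(E)=0$ and $\ch_0(E)=0$, hence $\ch_2^\beta(E)=0$, i.e. the zero class. Thus $Q$ is negative definite on $\ker Z_{\beta,\omega}$.

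The substance lies in the inequality $Q(E)\geq 0$, and the key simplification is that $Q$ is invariant along the ray $\{s\omega\}_{s>0}$: replacing $\omega$ by $s\omega$ multiplies both $(\ch_1^\beta.\omega)^2$ and $\omega^2$ by $s^2$, while the constant $C_\omega$ may be kept fixed because \eqref{eq:AB_valueomega} is itself scale-invariant. Hence a single form $Q$ works for every $\sigma_{\beta,s\omega}$, and it suffices to prove $Q(E)\geq 0$ whenever $E$ is semistable for some member of the ray. I would then run the standard argument: the weak inequality of Lemma \ref{lemma:AB_presupport} produces a locally finite wall-and-chamber decomposition of $\{\sigma_{\beta,s\omega}\}_{s>0}$ (cf. \cite{MS17}*{\textsection 6.3}), so it is enough to check $Q\geq 0$ in the large-volume chamber $s\gg 0$ and propagate across each wall.

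In the large-volume limit the semistable objects are classified: up to shift, each is either a twisted Gieseker-semistable torsion-free sheaf or a pure torsion sheaf. For a torsion-free $\mu_\omega$-semistable sheaf $F$ the classical Bogomolov inequality gives $\ch_1(F)^2-2\ch_0(F)\ch_2(F)\geq 0$, so $Q(F)\geq 0$ since the remaining term is a nonnegative multiple of a square; as $Q$ is quadratic the same holds for the shift $F[1]$. For a pure torsion sheaf $E$ one has $\ch_0(E)=0$ and $\ch_1(E)=\ch_1^\beta(E)$ an effective class $D$, so $Q(E)=D^2+\tfrac{C_\omega}{\omega^2}(\omega.D)^2\geq 0$ by the defining property \eqref{eq:AB_valueomega} of $C_\omega$; this case---already visible for $\O_C$ with $C$ a $(-1)$-curve, where $D^2=-1$---is exactly what forces the correction term into $Q$. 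To cross a wall I would apply the inheritance principle for the support property (cf. \cite{MS17}): at a wall $E$ acquires a Jordan--Hölder filtration whose factors $E_i$ are semistable in the adjacent chamber with $Z_{\beta,s\omega}(E_i)$ all positively proportional, so, $Q$ being negative definite on $\ker Z_{\beta,s\omega}$ and each $Q(E_i)\geq 0$, one concludes $Q(E)=Q\!\left(\sum_i v(E_i)\right)\geq 0$. Inducting from $s\gg 0$ down to $s=1$ gives the claim.

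The main obstacle is the pair of inputs invoked above that lie outside this excerpt: the large-volume description of semistable objects, which anchors the induction, and the local finiteness of walls together with the inheritance principle, which runs it. Both flow from the weak inequality of Lemma \ref{lemma:AB_presupport} via the general theory of \cite{MS17}, but this is where the real content sits; by contrast the negative-definiteness check and the torsion-sheaf case become routine once one notes that $Q$ is constant along $\{s\omega\}_{s>0}$ and that \eqref{eq:AB_valueomega} is tailored precisely to effective divisors.
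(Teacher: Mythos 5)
Your proposal is correct and follows essentially the same route as the paper, which does not reprove this statement but quotes it from \cite{MS17}*{6.13} together with precisely the outline you reconstruct: negative definiteness on $\ker Z_{\beta,\omega}$ via the Hodge index theorem, scale invariance of $Q$ along the ray $\{s\omega\}_{s>0}$, a large-volume anchor using the Bogomolov inequality and effectivity of $\ch_1$, and wall-crossing inheritance driven by the weak support property of Lemma \ref{lemma:AB_presupport}. Two minor remarks: you silently (and correctly) read the middle term of \eqref{eq:AB_fullsupport} as $-2\ch_0(E)\ch_2^\beta(E)$, i.e.\ as the twist-invariant discriminant---with the printed $\ch_2(E)$ the form is not even negative definite on $\ker Z_{\beta,\omega}$, as the class of $\O_S(\beta)$ already shows---and your large-volume classification should also include two-term complexes whose $H^{-1}$ is torsion-free slope-semistable and whose $H^0$ is zero-dimensional, a case your argument handles by the identical discriminant computation.
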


Note that the constant $C_\omega$ satisfies \eqref{eq:AB_valueomega} if we replace $\omega$ by $s\omega$. This way, one easily verifies that \eqref{eq:AB_valueomega} depends only on $\beta$ and the ray spanned by $\omega$.

\section{Construction of the heart} \label{sec:constrheart}

Let us fix a smooth, projective surface $S$. Recall from Question \ref{question:intro_main} that we wish to describe stability conditions $\overline{\sigma}_{\beta, \lambda}$ that are limits of the Arcara--Bertram construction, where $\lambda$ is a nef class. We will focus on the case when $\lambda$ is big and nef, so that $\lambda^2>0$. We will give a description of a twisted heart for such stability conditions, and a criterion to check whether $Z_{\beta, \lambda}$ defines a pre-stability condition on it. At last, \emph{assuming} that $\overline{\sigma}_{\beta, \lambda}$ exists, we will give a description of the heart. We will use this description to prove Theorem \ref{teo:intro_nonexistence}.

\subsection{Shared heart}

Fix $\beta \in \NS(S)_\R$ and a positive number $V>0$. Consider the collection $\Amp_V = \{ \omega \in \Amp(S)_\R : \omega^2 = V \}$. We point out that the intersection form on $\Amp(S)$ is defined over $\Q$. This way, if the collection of $\lambda \in \NS(S)_\R$ with $\lambda^2=V$ contains a single class with rational coefficients, then it has a dense set of classes with rational coefficients. 

For each ample class $\omega \in \Amp_V$, we consider the stability condition $\sigma_{\beta, \omega}$ from Subsection \ref{subsec:AB}. As we mentioned there, the heart of these stability conditions are obtained by tilting $\Coh(S)$ with respect to $\omega$-slope. Thus, in general these hearts will be different. However, the following simple lemma shows that there \emph{is} a shared heart, obtained by tilting the standard heart.

\begin{lemma} \label{lemma:heart_indep}
The heart $\B_{\beta, V} := \Pslicing_{\beta, \omega}((-1/2, 1/2])$ does not depend on $\omega \in \Amp_V$. Similarly, the subcategory $\Cheart_{\beta, V} = \Pslicing_{\beta, \omega}(1/2)$ depends only on $\beta$ and $\omega^2$
\end{lemma}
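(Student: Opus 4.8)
The engine of the proof is the observation highlighted in the introduction: writing $V = \omega^2$, the real part
\[ \Re Z_{\beta, \omega}(E) = -\ch_2^\beta(E) + \tfrac{V}{2}\ch_0(E) =: W(E) \]
depends only on $\beta$ and $V$, and not on the individual class $\omega \in \Amp_V$. I would first dispatch the assertion about $\Cheart_{\beta, V}$ by reducing it to the one about $\B_{\beta, V}$. Indeed, for a nonzero $E \in \B_{\beta, V} = \Pslicing_{\beta, \omega}((-1/2, 1/2])$ with Harder--Narasimhan factors of phases $\phi_1, \dots, \phi_m \in (-1/2, 1/2]$, one has $\Re Z_{\beta, \omega}(E) = \sum_i \abs{Z_{\beta, \omega}(A_i)}\cos(\pi\phi_i) \geq 0$, with equality if and only if every $\phi_i = 1/2$, i.e. if and only if $E \in \Pslicing_{\beta, \omega}(1/2) = \Cheart_{\beta, V}$. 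Hence $\Cheart_{\beta, V} = \{ E \in \B_{\beta, V} : W(E) = 0\} \cup \{0\}$, and since $W$ is $\omega$-independent, the independence of $\Cheart_{\beta, V}$ follows at once from that of $\B_{\beta, V}$.

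For the first assertion the plan is to show that the map $\omega \mapsto \B_{\beta, V}$ is locally constant on $\Amp_V$ and then to invoke connectedness of $\Amp_V$ (it is the intersection of the convex ample cone with one sheet of the hyperboloid $\{\omega^2 = V\}$, hence connected). The key structural point is that $\B_{\beta, V}$ is cut out of the \emph{continuously} varying slicing $\Pslicing_{\beta, \omega}$ (recall the continuity of $\Sigma$) at the two boundary phases $\pm 1/2$, and both of these lie on the locus $\{\Re Z = 0\} = \{W = 0\}$, which is pinned down independently of $\omega$. Concretely, a $\sigma_{\beta, \omega}$-semistable object $E$ has phase strictly inside $(-1/2, 1/2)$ as soon as $W(E) > 0$, and phase strictly outside this range as soon as $W(E) < 0$, regardless of $\omega$; only the objects with $W(E) = 0$ can lie exactly on a boundary, and for these membership in $\B_{\beta, V}$ is governed by the sign of the sole $\omega$-dependent quantity $\Im Z_{\beta, \omega}(E) = \omega.\ch_1^\beta(E)$.

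The crux is therefore to rule out that such a boundary object migrates from phase $1/2$ to phase $-1/2$ as $\omega$ moves, which is exactly what would make the heart jump. Here I would exploit the support property. If $E$ is $\sigma_{\beta, \omega}$-semistable with $W(E) = 0$, then $\ch_2^\beta(E) = \tfrac{V}{2}\ch_0(E)$, and the only classes $\omega$ at which its phase can transition between $1/2$ and $-1/2$ are those on the wall $\omega.\ch_1^\beta(E) = 0$. At such an $\omega$ one would have $Z_{\beta, \omega}(E) = W(E) + i\,\omega.\ch_1^\beta(E) = 0$, and by continuity $\abs{Z_{\beta, \omega'}(E)}$ remains small on a whole neighborhood, contradicting the support-property estimate $\abs{Z(E)} \geq \alpha\norm{v(E)}$ of formulation (d') for $v(E) \neq 0$. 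Thus no semistable object can sit on a boundary near one of these candidate walls; combined with the local finiteness of the wall-and-chamber structure of $\{\sigma_{\beta, \omega}\}$ afforded by Lemma \ref{lemma:AB_presupport} (see the discussion following it), this is what should force the heart to be locally constant.

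I expect the genuine technical work to lie in making this last step rigorous, namely in verifying that crossing any wall does not alter the category. The argument I envisage proceeds by induction on the mass $\abs{Z}$. For an object $E$ with $W(E) \geq 0$ that is $\sigma_{\beta, \omega}$-semistable just on one side of a wall $\omega_0$, one tracks its factors through the semistable reduction at $\omega_0$: there all factors share the phase of $E$ and hence inherit $W > 0$ when $W(E) > 0$, so their phases cannot escape $(-1/2, 1/2)$ on the other side, giving $E \in \B_{\beta, V}$ on both sides. The residual $W = 0$ factors cannot remain stable across $\omega_0$, since that would force $Z = 0$ on the wall in contradiction with (d'); they must therefore break into strictly smaller-mass pieces, to which the inductive hypothesis applies (the hypothesis being fed by the discreteness of semistable classes of bounded mass, itself a consequence of the support property (d)). This bookkeeping around the degenerate wall $Z = 0$ is, I anticipate, the main obstacle; everything else reduces to the clean $\omega$-independence of $W$ and the positivity computation of the first paragraph.
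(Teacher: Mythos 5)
Your two structural moves are sound and partially coincide with the paper: the observation that $\Re Z_{\beta,\omega}(E) = -\ch_2^\beta(E) + \tfrac{V}{2}\ch_0(E)$ depends only on $(\beta, V)$ is precisely the engine of the paper's proof, and your reduction $\Cheart_{\beta,V} = \{E \in \B_{\beta,V} : \Re Z(E) = 0\} \cup \{0\}$ via the cosine computation is a correct expansion of what the paper dismisses as ``proved similarly.'' The divergence is in the first assertion: the paper settles it in one line by appealing to the skewed-heart formalism of \cite{Bay19}*{\textsection 5} (cf. \cite{Bri07}*{4.4}), whereas you attempt a from-scratch local-constancy-plus-connectedness argument. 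That strategy is viable, but as written it has a genuine gap at exactly the point you flag yourself: the bookkeeping showing the heart cannot jump is only sketched, and the sketch has identifiable holes. First, your semistable-reduction step presupposes that an object semistable on one side of a wall $\omega_0$ is semistable \emph{at} $\omega_0$, which is false in general; tracking ``its factors through the semistable reduction at $\omega_0$'' therefore assumes control of Harder--Narasimhan behavior across $\omega_0$, which is the very thing being proven. Second, the induction ``on the mass $\abs{Z}$'' is not well-founded as stated (masses are real numbers); the discreteness you gesture at would need to be set up carefully before the recursion makes sense.

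More tellingly, the mechanism that actually closes a direct argument is absent from your sketch, and it renders the support property (your (d') appeal) unnecessary. Suppose $E \in \B_{\beta,V}$ at $\omega$ and some $\sigma_{\beta,\omega'}$-HN factor $A$ of $E$ has phase in $(1/2, 1/2+\epsilon)$ for $\omega'$ close to $\omega$. Then $\Re Z(A) < 0$, so since the $\omega$-phases of $A$ lie within $2\epsilon$ of $1/2$ and $\Re Z$ is $\omega$-independent, the top $\omega$-HN factor $B$ of $A$ has $\omega$-phase $\psi > 1/2$. One must now produce a \emph{nonzero} composite $B \to A \to E$ to contradict $\Hom(\Pslicing_{\beta,\omega}(\psi), \Pslicing_{\beta,\omega}((-\infty, 1/2])) = 0$; the obstruction to nonvanishing lives in $\Hom(B, (E/A)[-1])$, which vanishes for phase reasons once $\epsilon$ is small, and the dual argument handles the boundary at $-1/2$. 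This continuity-plus-Hom-vanishing route uses only the pre-stability axioms and the constancy of $\Re Z$ --- note that for a single semistable object, $Z(E) \neq 0$ is already axiom (c), so your invocation of (d') and the wall-and-chamber structure is a detour that still leaves the degenerate-wall case unresolved rather than resolving it. In short: right key observation, correct reduction for $\Cheart_{\beta,V}$, but the decisive step of your alternative route is an acknowledged sketch whose stated scheme would not go through without the Hom-vanishing mechanism above (or the citation the paper uses in its place).
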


\begin{proof}
We have that
\[ \Re Z_{\beta, \omega}(E) = -\ch_2^\beta(E) + \frac{\omega^2}{2} \ch_0(E) = -\ch_2^\beta(E) + \frac{V}{2} \ch_0(E). \]
This way, the central charges $Z_{\beta, \omega}$ depend only on $\beta$ and $V$. From here the (skewed) heart $\B_{\beta, V}$ does not depend on $\omega$, cf. \cite{Bay19}*{\textsection 5}. The second result is proved similarly.
\end{proof}

As a consequence, note that $(Z_{\beta, \omega}, \B_{\beta, V})$ is a skewed stability condition. We point out that $\Cheart_{\beta, V}$ is of finite length, cf. \cite{Bri08}*{4.5}. In this category, the minimal objects (i.e. without subobjects) correspond to the $\sigma_{\beta, \omega}$-stable objects of phase $1/2$ for some (hence for all) $\omega \in \Amp_V$.

\subsection{Criterion for being a central charge} \label{subsec:cc}

Fix $\beta \in \NS(S)_\Q$ and $V>0$. Note that we are assuming that $\beta$ has rational coefficients.

\begin{prop} \label{prop:cc_crit}
Let $\lambda \in \Nef(S)_\Q$ be a class with $\lambda^2=V$. We have that $Z_{\beta, \lambda}$ defines an skewed central charge on $\B_{\beta, V}$ if and only if there are no minimal objects $E$ in $\Cheart_{\beta, V}$ with $\Im Z_{\beta, \lambda}(E)=0$.
\end{prop}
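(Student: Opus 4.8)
The plan is to prove Proposition \ref{prop:cc_crit} by unwinding what it means for $Z_{\beta, \lambda}$ to be a skewed central charge on the fixed heart $\B_{\beta, V}$. Recall from Remark \ref{remark:basicbrid_heart} that $Z_{\beta, \lambda}$ is a skewed central charge on $\B_{\beta, V}$ precisely when $Z_{\beta, \lambda}$ maps every nonzero object of $\B_{\beta, V}$ into the union $\{\Im z > 0\} \cup \R_{>0} \cup \R_{<0}$ of the open upper half-plane and the two halves of the real axis (this is the ``skewed'' version of the upper-half-plane condition, corresponding to phases in $(-1/2, 1/2]$), and moreover every such object admits Harder--Narasimhan filtrations. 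The key observation, which I would isolate first, is that by Lemma \ref{lemma:heart_indep} the real part $\Re Z_{\beta, \lambda} = \Re Z_{\beta, \omega}$ is identical for $\lambda$ and for any auxiliary $\omega \in \Amp_V$, and only the imaginary part $\Im Z_{\beta, \lambda}(E) = \lambda.\ch_1^\beta(E)$ differs (with $\lambda$ now only nef rather than ample). Since $\sigma_{\beta, \omega}$ is already a genuine stability condition with the \emph{same} heart $\B_{\beta, V}$, much of the work is already done for us.

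First I would show the positivity condition. Take a nonzero $E \in \B_{\beta, V}$. Since $\lambda$ is nef and the analogous strict inequality holds for the ample $\omega$, one gets $\Im Z_{\beta, \lambda}(E) = \lambda.\ch_1^\beta(E) \geq 0$ for all $E \in \B_{\beta, V}$; this should follow formally from writing $\lambda$ as a limit of ample classes (or using that $\B_{\beta,V}$ is generated under extensions by $\sigma_{\beta,\omega}$-semistable objects of phase in $(-1/2,1/2]$, whose $\ch_1^\beta$ pairs nonnegatively with any nef class) together with nefness. The only way the skewed positivity can fail is therefore when $\Im Z_{\beta, \lambda}(E) = 0$ but $\Re Z_{\beta, \lambda}(E) = 0$ as well, i.e. $Z_{\beta, \lambda}(E) = 0$ outright. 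So the positivity axiom is equivalent to the statement that there is no nonzero $E \in \B_{\beta, V}$ with $Z_{\beta, \lambda}(E) = 0$.

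Next I would reduce ``no object of the whole heart $\B_{\beta, V}$ is killed by $Z_{\beta,\lambda}$'' to the stated condition about \emph{minimal} objects of $\Cheart_{\beta, V} = \Pslicing_{\beta,\omega}(1/2)$. This is where the structure of $\Cheart_{\beta,V}$ enters. If some nonzero $E \in \B_{\beta, V}$ satisfies $Z_{\beta,\lambda}(E)=0$, then in particular $\Im Z_{\beta,\lambda}(E)=0$; since the imaginary part is nonnegative on $\B_{\beta,V}$ and additive on short exact sequences, every $\sigma_{\beta,\omega}$-semistable HN factor $A$ of $E$ with respect to the auxiliary ample $\omega$ must also have $\Im Z_{\beta,\lambda}(A)=0$. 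The factors with $\Im Z_{\beta,\omega}>0$ can be split off, so one is reduced to objects lying in the phase-$1/2$ piece, i.e. in $\Cheart_{\beta,V}$. Now $\Cheart_{\beta,V}$ has finite length (as recorded after Lemma \ref{lemma:heart_indep}, cf. \cite{Bri08}*{4.5}), so any object of it is a finite iterated extension of minimal objects; since $\Im Z_{\beta,\lambda}$ is additive and nonnegative, $\Im Z_{\beta,\lambda}=0$ on the object forces $\Im Z_{\beta,\lambda}=0$ on each minimal factor, and likewise $\Re Z_{\beta,\lambda}(E)=0$ together with the fact that $\Re Z_{\beta,\lambda}$ is (up to sign) the phase-$1/2$ normalization forces $Z_{\beta,\lambda}$ to vanish on a minimal subquotient. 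Thus a minimal $E \in \Cheart_{\beta,V}$ with $\Im Z_{\beta,\lambda}(E)=0$ exists if and only if some nonzero object of $\B_{\beta,V}$ lies in the kernel of $Z_{\beta,\lambda}$. Combined with the previous paragraph, this gives the ``if and only if'' for positivity.

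Finally, I would address the existence of Harder--Narasimhan filtrations, which is the remaining ingredient of being a pre-stability/central charge and the step I expect to be the main obstacle. The idea is that HN filtrations should be inherited from those of the genuine stability condition $\sigma_{\beta,\omega}$ on the same heart $\B_{\beta,V}$: one invokes a standard criterion (as in \cite{Bri07}, or the discreteness/local-finiteness arguments there) guaranteeing HN filtrations once the kernel condition holds and the image of $Z_{\beta,\lambda}$ on $\B_{\beta,V}$ is suitably discrete, using that $\lambda \in \Nef(S)_\Q$ has rational coefficients so that $Z_{\beta,\lambda}$ takes values in a discrete subgroup of $\C$. The delicate point is that $\lambda$ is merely nef, so $\Im Z_{\beta,\lambda}$ can vanish on a positive-dimensional cone of classes, and one must check that this does not produce phases accumulating in a way that destroys the HN property; the absence of kernel objects is exactly what rules this out, letting the HN filtrations of $\sigma_{\beta,\omega}$ be refined or coarsened to produce those for $Z_{\beta,\lambda}$. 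Putting the positivity equivalence and the HN filtration together yields that $Z_{\beta,\lambda}$ is a skewed central charge on $\B_{\beta,V}$ if and only if no minimal object of $\Cheart_{\beta,V}$ is killed by $\Im Z_{\beta,\lambda}$, as claimed.
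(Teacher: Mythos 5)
Your overall skeleton (reduce failure of positivity to minimal objects of $\Cheart_{\beta,V}$ via additivity and finite length, then get Harder--Narasimhan filtrations from rationality of $\beta,\lambda$ and discreteness; your last paragraph is essentially the paper's appeal to \cite{MS17}*{4.10}) matches the paper's proof, but the positivity step rests on a false inequality, and this is a genuine gap. You claim that $\Im Z_{\beta,\lambda}(E) = \lambda.\ch_1^\beta(E) \geq 0$ for every nonzero $E \in \B_{\beta,V}$, ``since the analogous strict inequality holds for the ample $\omega$.'' It does not: the heart on which $\Im Z_{\beta,\omega} \geq 0$ holds is the Arcara--Bertram heart $\A_{\beta,\omega} = \Pslicing_{\beta,\omega}((0,1])$, not the skewed heart $\B_{\beta,V} = \Pslicing_{\beta,\omega}((-1/2,1/2])$. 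By the compatibility axiom, every nonzero object of $\Pslicing_{\beta,\omega}((-1/2,0))$ has $\Im Z_{\beta,\omega} < 0$, and such objects exist: on $S=\P^2$ with $\omega = tH$, $\beta = \epsilon H$, $0<\epsilon\ll t$, the sheaf $\O_S$ lies in $\Ftors_{\beta,\omega}$ and $\O_S[1]$ is $\sigma_{\beta,\omega}$-stable with charge $\tfrac{\epsilon^2-t^2}{2}+i\epsilon t$, of phase in $(1/2,1)$; hence $\O_S \in \B_{\beta,V}$ while $\Im Z_{\beta,\omega}(\O_S) = -\epsilon t < 0$. The same confusion appears in your target region: $\{\Im z > 0\}\cup\R_{>0}\cup\R_{<0}$ is the set of charges of phase in the \emph{closed} interval $[0,1]$, which is not the allowed region of any heart; phases in $(-1/2,1/2]$ correspond to $\{\Re z > 0\}\cup i\R_{>0}$. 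Consequently your reduction to $\Cheart_{\beta,V}$ also fails as written: a $\sigma_{\beta,\omega}$-semistable object of $\B_{\beta,V}$ with vanishing imaginary part has phase $0$, not $1/2$, so ``splitting off the factors with $\Im Z_{\beta,\omega}>0$'' does not land you in $\Cheart_{\beta,V}$; membership in $\Cheart_{\beta,V}$ is detected by the \emph{real} part.

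The repair is exactly the observation you record in your first paragraph and then do not use: under the skew, the roles of $\Re$ and $\Im$ must be exchanged. Since $\Re Z_{\beta,\lambda} = \Re Z_{\beta,\omega_i}$ for any ample $\omega_i$ with $\omega_i^2=V$ (Lemma \ref{lemma:heart_indep}), and each $Z_{\beta,\omega_i}$ is a skewed central charge on $\B_{\beta,V}$, one gets for free that $\Re Z_{\beta,\lambda} \geq 0$ on $\B_{\beta,V}$, and that $\Re Z_{\beta,\lambda}(E)=0$ forces every $\sigma_{\beta,\omega}$-HN factor of $E$ to have phase exactly $1/2$, i.e. $E \in \Cheart_{\beta,V}$. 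Only at that point does the imaginary part enter, and only on $\Cheart_{\beta,V}$: choosing ample $\omega_i \to \lambda$ with $\omega_i^2=V$ and passing to the limit in $\Im Z_{\beta,\omega_i}(E)>0$ gives $\Im Z_{\beta,\lambda}\geq 0$ on $\Cheart_{\beta,V}$, after which finite length and additivity reduce the vanishing question to minimal objects, which is your intended endgame. So the intermediate statements you aim for (positivity fails iff $Z_{\beta,\lambda}$ kills a nonzero object of the heart, iff it kills a minimal object of $\Cheart_{\beta,V}$) are correct, but your derivation of them goes through an inequality that is false on $\B_{\beta,V}$ and a mischaracterization of both the positivity region and of $\Cheart_{\beta,V}$; the argument must be rerouted through the real part as above, which is precisely what the paper's proof does.
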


\begin{proof}
Pick a sequence $\omega_i \in \Amp(S)_\Q$ with $\omega_i^2 = V$ and $\omega_i$ approaching $\lambda$. The fact that $Z_{\beta, \omega_i}$ defines a skewed central charge on $\B_{\beta, V}$ gives us $E \in \B_{\beta, \omega_i}$, hence $\Re Z_{\beta, \omega_i}(E)\geq 0$. Moreover, if equality holds and $E$ is non-zero, then $\Im Z_{\beta, \omega_i}(E)>0$. By continuity, we get that for each $E \in \B_{\beta, V}$ we have $\Re Z_{\beta, \lambda}(E)\geq 0$, and in case of equality $\Im Z_{\beta, \lambda}(E)\geq 0$.

This way, any object $E \in \B_{\beta, V}$ with $\Re Z_{\beta, \lambda}(E)=0$ must be in $\Cheart_{\beta, V}$ by definition. Here any minimal object has $\Im Z_{\beta, \lambda}(E) \geq 0$. Thus, if there are no objects with $\Im Z_{\beta, \lambda}(E)=0$, then $Z_{\beta, \lambda}$ maps the objects on $\B_{\beta, V}$ into the upper half-space. The Harder--Narasimhan property is a direct consequence of $\beta, \lambda$ being rational and $\Cheart_{\beta, V}$ being of finite length, thanks to \cite{MS17}*{4.10}.

This proves one direction of the proposition; the other one is clear.
\end{proof}

Verifying that this condition holds for a particular combination of $\beta$ and $\lambda$ is not easy, as it requires some explicit control on the collection $\Cheart_{\beta, V}$. Nevertheless, some basic description can be obtained.

\begin{lemma} \label{lemma:cc_descriptionbad}
Fix $\beta \in \NS(S)_\Q$ and $V>0$. Let $f\colon S \to T$ be a birational map to a normal, projective surface $T$. Pick $\eta \in \Amp(T)_\Q$ a class with $\eta^2=V$, and let $\lambda=f^\ast\eta \in \Nef(S)_\Q$.

Let $0 \neq E \in \Cheart_{\beta, V}$ be a minimal object, and suppose that $\Im Z_{\beta, \lambda}(E)=0$. Then $E$ is a coherent sheaf on $S$, pure of dimension 1, whose support is connected and contained in $\Exc(f)$. 
\end{lemma}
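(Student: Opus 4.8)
The plan is to first squeeze all the numerical consequences out of the hypotheses, then prove in turn that $E$ has rank zero, that it is an honest sheaf, and finally to read off purity, connectedness, and the containment of the support from positivity. The single delicate point will be the sheaf-ness.

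By the description following Lemma \ref{lemma:heart_indep}, a minimal object $E$ is $\sigma_{\beta,\omega}$-stable of phase $1/2$ for \emph{every} $\omega\in\Amp_V$; equivalently $\Re Z_{\beta,\omega}(E)=0$, so that $\ch_2^\beta(E)=\tfrac V2\ch_0(E)$, and $\Im Z_{\beta,\omega}(E)=\omega.\ch_1^\beta(E)>0$. First I would obtain $\ch_0(E)=0$. As in the proof of Proposition \ref{prop:cc_crit}, pick $\omega_i\in\Amp_V$ with $\omega_i\to\lambda$ (possible since $\lambda=f^\ast\eta$ is nef with $\lambda^2=V>0$). Each $E$ is $\sigma_{\beta,\omega_i}$-semistable, so Lemma \ref{lemma:AB_presupport} gives $(\ch_1^\beta(E).\omega_i)^2\ge 2V\ch_0(E)\ch_2^\beta(E)=V^2\ch_0(E)^2$. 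Letting $i\to\infty$ and using $\ch_1^\beta(E).\lambda=\Im Z_{\beta,\lambda}(E)=0$ forces $0\ge V^2\ch_0(E)^2$, hence $\ch_0(E)=0$. Then $\ch_2^\beta(E)=0$ and $\ch_1^\beta(E)=\ch_1(E)$; as $\omega.\ch_1(E)>0$ we get $\ch_1(E)\neq0$, and since $\ch_1(E)\in\lambda^\perp$ with $\lambda^2>0$, the Hodge index theorem gives $\ch_1(E)^2<0$.

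The main obstacle is to show $E$ is concentrated in a single degree. Through its standard cohomology, $E$ sits in a short exact sequence $0\to H^{-1}(E)[1]\to E\to H^0(E)\to0$ in $\A_{\beta,\omega}$, with $H^{-1}(E)\in\Ftors_{\beta,\omega}$ torsion-free and $H^0(E)\in\Ttors_{\beta,\omega}$; I want $H^{-1}(E)=0$. If not, then $H^0(E)\neq0$ as well (else $\ch_0(E)=-\ch_0(H^{-1}(E))<0$), so $H^{-1}(E)[1]$ is a nonzero proper subobject of the stable object $E$ and all its Harder--Narasimhan factors have phase $<1/2$; this yields $\Re Z_{\beta,\omega}(H^{-1}(E)[1])>0$, i.e. $\ch_2^\beta(H^{-1}(E))>\tfrac V2\ch_0(H^{-1}(E))$. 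The hard part is to contradict this inequality. I would bound $\ch_2^\beta(H^{-1}(E))$ from above by applying the classical Bogomolov inequality $\Delta\ge0$ to each slope-semistable Harder--Narasimhan factor of the torsion-free sheaf $H^{-1}(E)$ together with the Hodge index estimate $D^2\le(D.\omega)^2/V$, the crucial input being that the negative-definiteness of $\lambda^\perp$ recorded in $\ch_1(E)^2<0$ is what makes the resulting bounds incompatible. (Alternatively one can appeal to the large-volume-limit identification of $\sigma_{\beta,\omega}$-stable objects of rank $0$ with twisted-Gieseker-stable pure sheaves, which applies here because $\ch_0(E)=\ch_2^\beta(E)=0$ keeps $E$ on the vertical ray under every rescaling of $\omega$.) Either way $H^{-1}(E)=0$, so $E$ is a coherent sheaf, and being of rank $0$ it is a torsion sheaf in $\Ttors_{\beta,\omega}$.

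The remaining assertions are then formal. For purity, a nonzero $0$-dimensional subsheaf $T\subseteq E$ would be a subobject of $E$ in $\A_{\beta,\omega}$ with $Z_{\beta,\omega}(T)=-\ch_2(T)\in\R_{<0}$, hence of phase $1>1/2$, contradicting semistability; combined with $\ch_1(E)\neq0$ this shows $E$ is pure of dimension $1$. For the support, $\ch_1(E)=\sum_j m_j[C_j]$ is the effective $1$-cycle of $\Supp(E)$, and $0=\Im Z_{\beta,\lambda}(E)=f^\ast\eta.\ch_1(E)=\eta.f_\ast\ch_1(E)$; since $\eta$ is ample on $T$, any non-contracted component would contribute strictly positively, so $f_\ast\ch_1(E)=0$ and $\Supp(E)\subseteq\Exc(f)$. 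Finally, a stable object satisfies $\Hom(E,E)=\C$ and is indecomposable, whereas a disconnected support would split $E$ as the direct sum of its restrictions to the connected components; hence $\Supp(E)$ is connected, completing the proof.
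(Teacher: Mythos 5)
Your skeleton matches the paper's proof: the rank-zero step via Lemma \ref{lemma:AB_presupport} and the limit $\omega_i \to \lambda$ is identical, and the purity, support, and connectedness arguments at the end are the same formal arguments the paper gives. The one genuinely hard step --- ruling out $H^{-1}(E) \neq 0$ --- is exactly where your write-up stops being a proof. You reduce correctly to the inequality $\ch_2^\beta(H^{-1}(E)) > \tfrac{V}{2}\ch_0(H^{-1}(E))$, and you name the right tools (Bogomolov--Gieseker on the slope-semistable Harder--Narasimhan factors of $H^{-1}(E)$, plus Hodge index), but you never derive the contradiction, and the mechanism you point to is not the correct one: the negativity $\ch_1(E)^2 < 0$ plays no role, and negative definiteness of $\lambda^\perp$ by itself cannot finish the argument, because the HN factors of $H^{-1}(E)$ are taken with respect to $\omega_i$ and their classes need not stay bounded or converge as $i \to \infty$, so one cannot ``pass the factors to the limit.'' Your parenthetical alternative is also unjustified: knowing $E$ is $\sigma_{\beta,\omega}$-stable for one $\omega$ with $\omega^2=V$ does not give stability along the ray $s\omega$, $s \gg 0$ (walls may separate these points), so the large-volume identification cannot be invoked as stated.

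The gap can be closed with the tools you name, and the closing argument is the mirror image of the paper's. For each HN factor $F$ of $H^{-1}(E)$ with respect to $\omega_i$-slope one has $\ch_1^\beta(F).\omega_i \leq 0$ (as $H^{-1}(E) \in \Ftors_{\beta,\omega_i}$) and, by Bogomolov--Gieseker plus Hodge index, $\ch_2^\beta(F) \leq (\ch_1^\beta(F).\omega_i)^2/(2V\ch_0(F))$. Summing over $F$ and comparing with $\ch_2^\beta(H^{-1}(E)) > \tfrac{V}{2}\sum_F \ch_0(F)$ forces some factor $F_0$ to satisfy $(\ch_1^\beta(F_0).\omega_i)^2 > V^2\ch_0(F_0)^2$, i.e. $-\ch_1^\beta(F_0).\omega_i > V$. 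Since every term $-\ch_1^\beta(F).\omega_i$ is non-negative and $\Im Z_{\beta,\omega_i}(H^0(E)) \geq 0$, this yields $\Im Z_{\beta,\omega_i}(E) \geq -\ch_1^\beta(H^{-1}(E)).\omega_i > V$ for every $i$; letting $\omega_i \to \lambda$ contradicts $\Im Z_{\beta,\lambda}(E)=0$. This uniform lower bound on $\Im Z_{\beta,\omega_i}(E)$, played against the hypothesis that it tends to zero, is what actually drives the contradiction --- not any definiteness statement. The paper runs the same mechanism on the quotient side instead: it takes the last HN factor $F_i$ of $H^0(E)$, deduces $(\ch_1^\beta(E).\omega_i)^2 \geq V^2$, and contradicts the limit; either side works once the limit argument is made explicit.
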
 

\begin{proof}
Let us start by fixing a collection of ample, rational classes $\omega_i$ approaching $\lambda$, satisfying $\omega_i^2=V$. Note that the fact that $E \in \Cheart_{\beta, V}$ implies
\begin{equation} \label{eq:cc_realpart}
0 = \Re Z_{\beta, \omega_i}(E) = -\ch_2^\beta(E) + \frac{V}{2} \ch_0(E).
\end{equation}
We also have $\Im Z_{\beta, \omega_i}(E) = \omega_i.\ch_1^\beta(E) >0$, as the $Z_{\beta, \omega_i}$ do define central charges. 

Let us start by showing that $E$ has $\ch_0(E)=0$. In fact, the stability of $E$ implies
\[ (\omega_i.\ch_1^\beta(E))^2 \geq 2 \omega_i^2 \ch_0(E) \ch_2^\beta(E) = V^2 \ch_0(E)^2, \]
by Lemma \ref{lemma:AB_presupport} and by \eqref{eq:cc_realpart}. Taking the limit as $i \to \infty$ and using that $\Im Z_{\beta, \lambda}(E)=0$ yields that $\ch_0(E)=0$.

We claim that $\ch_0(H^0(E))=0$. Otherwise, we let $F_i$ to be the last Harder--Narasimhan factor of $H^0(E)$ with respect to $\omega_i$-slope. This gives us the short exact sequence
\begin{equation} \label{eq:cc_lastfactor}
0 \to G_i \to E \to F_i \to 0
\end{equation}
in $\A_{\beta, \omega_i}$. (Note that $F_i$ will most certainly depend on $i$.) As $E$ is $\sigma_{\beta, \omega_i}$-stable with $\Re Z_{\beta, \omega_i}(E)=0$, this implies that $\ch_2^\beta(F_i)> V\ch_0(F_i)/2$. But $F_i$ is $\omega_i$-slope semistable, hence 
\[ \ch_2^\beta(F_i) \leq \frac{\ch_1^\beta(F_i)^2}{2\ch_0(F_i)} \leq \frac{(\ch_1^\beta(F_i) \cdot \omega_i)^2}{2 V \ch_0(F_i)} \]
by the Bogomolov--Gieseker inequality and the Hodge Index Theorem. Putting these two inequalities together yields $(\ch_1^\beta(F_i) \cdot \omega_i)^2 \geq V^2$.
At last, from \eqref{eq:cc_lastfactor} we get $\Im Z_{\beta, \omega_i}(E) \geq \Im Z_{\beta, \omega_i}(F_i)\geq 0$, and so $(\ch_1^\beta(E) \cdot \omega_i)^2 \geq V^2$. Taking the limit as $\omega_i \to \lambda$ yields a contradiction. It follows that $\ch_0(H^0(E))=0$.

Note that $\ch_0(H^0(E))=0$ and $\ch_0(E)=0$ together imply that $\ch_0(H^{-1}(E))=0$. This way, it follows that $H^{-1}(E)=0$, and that $E=H^0(E)$ is a torsion sheaf. The fact that $\Im Z_{\beta, \lambda}(E)=0$ translates to $\eta.f_\ast \ch_1(E)=0$. But $\ch_1(E)$ is effective, hence the only option is that $E$ is supported on $\Exc(f)$ union finitely many points. Here $E$ must have $\ch_1(E)\neq 0$, as otherwise $\Im Z_{\beta, \omega_i}(E)$ will be zero.

Note that $E$ has no subsheaves of dimension zero. In fact, any such subsheaf will destabilize $E$ in $\sigma_{\beta, \omega_i}$, as all these have phase 1. Similarly, if the support of $E$ is disconnected, then $E$ could we written as a direct sum, contradicting the fact that $E$ is stable. This proves the required result.
\end{proof}

\subsection{Identifying the heart} \label{subsec:idheart}

Fix a birational map $f\colon S \to T$, and consider $\beta \in \NS(S)_\Q$, $\eta \in \Amp(S)_\Q$. Let $\lambda=f^\ast \eta \in \Nef(S)_\Q$. We assume that the limit 
\[ \overline{\sigma}_{\beta, f^\ast \eta} = \lim_{\substack{\omega \in \Amp(S)\\ \omega \to f^\ast\eta}} \sigma_{\beta, \omega} \]
exists in $\Stab(S)$. Our goal is to get a description of its heart, solely from the convergence. We will denote its slicing by $\overline{\Pslicing}_{\beta, f^\ast\eta}$, to differentiate it from the Arcara--Bertram construction.

\begin{claim}
Let $E \in \B_{\beta, V}$ be given. Then, for any $\omega \in \Amp(S)$ with $\omega^2=V$, we have that the cohomology objects of $E$ with respect to the t-structure with heart $\A_{\beta, \omega}$ are zero outside of degrees $0$ and $1$. This follows directly from $\B_{\beta, V} = \Pslicing_{\beta, \omega}((-1/2, 1/2])$, cf. Lemma \ref{lemma:heart_indep}. For instance, we have that $\O_x[-1] \in \B_{\beta, V}$, as it has phase zero.
\end{claim}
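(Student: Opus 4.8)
The plan is to read off the cohomological degrees directly from phases, exploiting that both hearts are carved out of the single slicing $\Pslicing_{\beta,\omega}$. Recall that the Arcara--Bertram heart is $\A_{\beta,\omega} = \Pslicing_{\beta,\omega}((0,1])$, whereas Lemma~\ref{lemma:heart_indep} gives $\B_{\beta,V} = \Pslicing_{\beta,\omega}((-1/2,1/2])$, the latter being independent of the auxiliary choice of $\omega \in \Amp_V$. Consequently, for $E \in \B_{\beta,V}$ every Harder--Narasimhan factor of $E$ with respect to $\Pslicing_{\beta,\omega}$ has phase lying in $(-1/2, 1/2]$.

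First I would translate the aisles of the bounded t-structure with heart $\A_{\beta,\omega}$ into phase conditions. Writing $\DC^{\le n}, \DC^{\ge n}$ for the truncation aisles relative to $\A_{\beta,\omega}$, the relation $\Pslicing_{\beta,\omega}(\phi)[1] = \Pslicing_{\beta,\omega}(\phi+1)$ yields $\DC^{\le n} = \Pslicing_{\beta,\omega}((-n,\infty))$ and $\DC^{\ge n} = \Pslicing_{\beta,\omega}((-\infty, 1-n])$. Thus an object lies in $\DC^{\ge m}\cap\DC^{\le n}$ exactly when all of its phases fall in $(-n, 1-m]$, and equivalently $\HC^i_{\A_{\beta,\omega}}(E)$ is nonzero only for those $i$ such that the window $(-i, 1-i]$ meets the set of phases occurring in $E$.

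Now the claim reduces to a comparison of intervals: the phase window $(-1/2, 1/2]$ of $E$ is contained in $(0,1] \cup (-1,0]$ --- the union of the degree-$0$ window $(0,1]$ and the degree-$1$ window $(-1,0]$ --- and is disjoint from the window $(-i, 1-i]$ whenever $i \le -1$ or $i \ge 2$. Hence $\HC^i_{\A_{\beta,\omega}}(E) = 0$ for $i \notin \{0,1\}$. Concretely, the factors of $E$ with phase in $(0,1/2]$ assemble into $\HC^0_{\A_{\beta,\omega}}(E)$ and those with phase in $(-1/2, 0]$ into $\HC^1_{\A_{\beta,\omega}}(E)$, exhibiting the two-term filtration explicitly.

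I do not expect any genuine obstacle: the statement is a formal consequence of the slicing formalism once $\A_{\beta,\omega}$ and $\B_{\beta,V}$ are written as phase intervals of the common slicing. The only point deserving care is the bookkeeping of the half-open endpoints, so that an object of phase exactly $0$ (such as $\O_x[-1]$) is correctly placed in $\HC^1$ rather than $\HC^0$, and phase exactly $1/2$ stays in degree $0$; this is precisely what pins the degrees to $0$ and $1$ rather than, say, a shifted pair.
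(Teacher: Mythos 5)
Your proposal is correct and follows the same route as the paper: the paper's entire justification is that $\B_{\beta,V}=\Pslicing_{\beta,\omega}((-1/2,1/2])$ sits inside $\Pslicing_{\beta,\omega}((-1,1])=\A_{\beta,\omega}\cup\A_{\beta,\omega}[-1]$ (as phase windows), and you simply spell out this containment via the aisle formulas $\DC^{\le n}=\Pslicing_{\beta,\omega}((-n,\infty))$, $\DC^{\ge n}=\Pslicing_{\beta,\omega}((-\infty,1-n])$. Your endpoint bookkeeping (phase $0$ landing in $\HC^1$, consistent with $\O_x[-1]$, and phase $1/2$ in $\HC^0$) is also exactly right.
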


\begin{lemma} \label{lemma:idheart_sspoints}
Given $x \in S$, we claim that $\O_x \in \B_{\beta, V}[1]$ is $\overline{\sigma}_{\beta, f^\ast \eta}$-semistable of phase one. 
\end{lemma}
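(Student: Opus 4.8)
\emph{Plan.} The plan is to deduce the semistability of $\O_x$ in the limit from the continuity of the Harder--Narasimhan phase functions on $\Stab(S)$, combined with the fact that skyscraper sheaves are stable of constant phase throughout the geometric chamber. Recall from \cite{Bri07} that the topology on $\Stab(S)$ is induced by a (generalized) metric under which, for each fixed object $E \in \DC^b(S)$, the extremal phases $\sigma \mapsto \phi^+_\sigma(E)$ and $\sigma \mapsto \phi^-_\sigma(E)$ are continuous. The standing hypothesis of this subsection is precisely that there is a family $\omega \to f^\ast\eta$ in $\Amp(S)_\R$, which we may replace by a sequence $\omega_i \to f^\ast\eta$ with $\omega_i^2 = V$, along which $\sigma_{\beta,\omega_i} \to \overline{\sigma}_{\beta, f^\ast\eta}=:\overline\sigma$ in $\Stab(S)$. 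Continuity of the phase functions then gives $\phi^\pm_{\sigma_{\beta,\omega_i}}(\O_x) \to \phi^\pm_{\overline\sigma}(\O_x)$.

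Next I would record the behaviour of $\O_x$ along this approaching sequence. Since $\ch(\O_x) = (0,0,1)$ we also have $\ch^\beta(\O_x) = (0,0,1)$, so that
\[ Z_{\beta, \omega}(\O_x) = -\ch_2^\beta(\O_x) + \tfrac{\omega^2}{2}\ch_0(\O_x) + i\,\omega.\ch_1^\beta(\O_x) = -1 \]
independently of $\omega$. In particular the central charge lies on the ray $\R_{>0}\cdot e^{i\pi}$, so any semistable object with this class has phase $1 \pmod 2$. Because each $\sigma_{\beta,\omega}$ lies in the geometric chamber, $\O_x$ is $\sigma_{\beta,\omega}$-stable (see the discussion following Theorem \ref{teo:intro_AB}), and therefore $\phi^+_{\sigma_{\beta,\omega}}(\O_x) = \phi^-_{\sigma_{\beta,\omega}}(\O_x) = 1$ for every $\omega \in \Amp_V$; this is consistent with the Claim preceding the statement, placing $\O_x \in \B_{\beta, V}[1]$.

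Passing to the limit along $\omega_i \to f^\ast\eta$ and using continuity of $\phi^\pm$ yields $\phi^+_{\overline\sigma}(\O_x) = \phi^-_{\overline\sigma}(\O_x) = 1$. Since an object is semistable exactly when its top and bottom Harder--Narasimhan phases agree, this shows that $\O_x$ is $\overline{\sigma}_{\beta, f^\ast\eta}$-semistable of phase $1$, as claimed.

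I expect the only genuinely delicate point to be the justification that the extremal phases vary continuously on $\Stab(S)$: this is where the precise form of Bridgeland's metric is needed, and it is exactly the mechanism that makes semistability a closed condition along a convergent family on which the object remains semistable of constant phase. By contrast, I would emphasize that \emph{stability} need not survive the limit: $\O_x$ may decompose into several $\overline\sigma$-semistable factors of phase $1$, and detecting precisely when this occurs is the content of the later analysis of the minimal objects of $\Cheart_{\beta, V}$ via Lemma \ref{lemma:cc_descriptionbad}.
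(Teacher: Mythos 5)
Your proof is correct, but it follows a genuinely different route from the paper's. You invoke the metric structure of $\Stab(S)$: since the topology is induced by Bridgeland's generalized metric, the extremal phase functions $\sigma \mapsto \phi^{\pm}_{\sigma}(\O_x)$ are $1$-Lipschitz, hence continuous, so the constancy $\phi^{\pm}_{\sigma_{\beta,\omega_i}}(\O_x)=1$ along the approximating sequence passes to the limit and forces $\phi^{+}_{\overline{\sigma}}(\O_x)=\phi^{-}_{\overline{\sigma}}(\O_x)=1$; in other words, semistability at a fixed phase is a closed condition on $\Stab(S)$, and skyscrapers are stable of phase one throughout the geometric chamber. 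The paper argues instead by contradiction: it takes the last Harder--Narasimhan factor $B$ of $\O_x$ with respect to $\overline{\sigma}_{\beta,f^\ast\eta}$, transports the resulting exact sequence $0 \to A \to \O_x \to B \to 0$ into an auxiliary Arcara--Bertram heart $\A_{\beta,\omega}$, shows that $B[-1]$ lies in $\A_{\beta,\omega} \cap \B_{\beta,V}$ (so that $Z_{\beta,\omega}(B)$ has argument in $\pi\cdot(1,3/2]$), and then lets $\omega \to f^\ast\eta$ to contradict the assumption that $Z_{\beta,f^\ast\eta}(B)$ has phase smaller than one --- an argument that, past the heart manipulation, uses only convergence of the central charges. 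What your approach buys is brevity and transparency: it is the standard ``semistability is closed'' argument and requires no analysis of hearts at all. What the paper's approach buys is a technique: the same pattern (compare cohomology objects with respect to the limiting heart and with respect to $\A_{\beta,\omega}$, then pass to the limit of central charges) is precisely what is needed in the harder Lemma \ref{lemma:idheart_noH2}, where no soft continuity argument is available; the paper's Remark after that lemma makes this point explicitly. Finally, your closing caveat --- that \emph{stability} of $\O_x$ need not survive the limit, only semistability --- is accurate and consistent with the paper's subsequent analysis of the minimal objects of $\Cheart_{\beta,V}$.
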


\begin{proof}
We argue by contradiction. Assume that $\O_x$ is not semistable, and let $B$ be its last Harder--Narasimhan factor with respect to $\overline{\sigma}_{\beta, f^\ast\eta}$, so that $Z_{\beta, f^\ast \eta}(B)$ has phase smaller than 1. This induces a short exact sequence in $\B_{\beta, V}[1]$:
\begin{equation} \label{eq:idheart_sequenceOx}
0 \to A \to \O_x \to B \to 0.
\end{equation}

Now, pick some $\omega \in \Amp(S)_\Q$ such that $\omega^2=V$. We can consider the short exact sequence above as a distinguished triangle in $\DC^b(S)$, and look at the cohomology objects with respect to the heart $\A_{\beta, \omega}$. This yields the exact sequence
\begin{equation} \label{eq:idheart_lesOx}
0 \to \HC^{-1}_{\A_{\beta, \omega}}(B) \to \HC^0_{\A_{\beta, \omega}}(A) \to \O_x \to \HC^0_{\A_{\beta, \omega}}(B) \to 0,
\end{equation}
and $\HC^{-1}_{\A_{\beta, \omega}}(A)=0$. In other words, we get $A=\HC^0_{\A_{\beta, \omega}}(A)$, hence $A \in \A_{\beta, \omega}$. 

Now, we claim that $\HC^0_{\A_{\beta, \omega}}(B)=0$. Otherwise, we will have that $\HC^0_{\A_{\beta, \omega}}(B)=\O_x$ (as $\O_x$ has no nontrivial quotients in $\A_{\beta, \omega}$), and \eqref{eq:idheart_lesOx} will give us that $A \to \O_x$ is the zero map. This is not possible, as $A \subset \O_x$ is a non-zero subobject. We get that $B=\HC^{-1}_{\A_{\beta, \omega}}(B)[1]$.

Putting all together, we get that the triangle \eqref{eq:idheart_sequenceOx} induces an exact sequence
\[ 0 \to B[-1] \to A \to \O_x \to 0 \]
in $\A_{\beta, \omega}$. Here $B[-1]$ lies both in $\B_{\beta, V}$ and in $\A_{\beta, \omega}$, and so $Z_{\beta, \omega}(B[-1])$ has argument in $(0, \pi/2]$, hence $Z_{\beta, \omega}(B)$ has argument in $\pi\cdot (1, 3/2]$. Taking the limit when $\omega \to f^\ast \eta$ contradicts the fact that $Z_{\beta, f^\ast \eta}(B)$ has phase smaller than one.
\end{proof}

\begin{cor}[cf. \cite{Bri08}*{Lemma 10.1}.] \label{cor:idheart_boundHi}
\begin{enumerate}
\item If $E \in \overline{\Pslicing}_{\beta, f^\ast \eta}(0, 1)$, then $E$ is quasi-isomorphic to a length two complex $E^{-1} \to E^0$ of locally free sheaves.
\item If $E \in \overline{\Pslicing}_{\beta, f^\ast \eta}(1)$, then $E$ is quasi-isomorphic to a length three complex $E^{-2} \to E^{-1} \to E^0$ of locally free sheaves.
\end{enumerate}
\end{cor}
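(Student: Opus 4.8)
The plan is to reduce both statements to the vanishing of the groups $\Ext^i(E, \O_x)$ outside a prescribed range of degrees, and then to invoke the standard characterization of the locally free amplitude of a perfect complex. Since $S$ is smooth, every object of $\DC^b(S)$ is perfect, and a bounded complex $E$ is quasi-isomorphic to a complex of locally free sheaves concentrated in degrees $[a,b]$ if and only if $\Ext^i(E, \O_x) = 0$ for every closed point $x \in S$ and every $i \notin [-b,-a]$ (the usual Tor-amplitude criterion, which is exactly the mechanism behind \cite{Bri08}*{Lemma 10.1}). For part (1) I must therefore show that $\Ext^i(E, \O_x)$ vanishes unless $i \in \{0,1\}$, and for part (2) unless $i \in \{0,1,2\}$.

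The lower bounds come directly from the slicing axioms together with Lemma \ref{lemma:idheart_sspoints}, which guarantees that $\O_x$ is $\overline{\sigma}_{\beta, f^\ast\eta}$-semistable of phase $1$. Writing $\Ext^i(E, \O_x) = \Hom(E, \O_x[i])$ and noting that $\O_x[i]$ is semistable of phase $1+i$, the vanishing of $\Hom$ from a higher phase to a lower phase forces $\Ext^i(E, \O_x) = 0$ whenever $\phi(E) > 1+i$. When $E \in \overline{\Pslicing}_{\beta, f^\ast\eta}(0,1)$ this kills all $i < 0$, and when $E \in \overline{\Pslicing}_{\beta, f^\ast\eta}(1)$ it again kills all $i < 0$.

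For the upper bounds I would use Serre duality on $S$ in the form $\Ext^i(E, \O_x) \cong \Ext^{2-i}(\O_x, E)^*$. The key point, which makes the argument work on an \emph{arbitrary} smooth surface rather than only on a K3, is that the canonical twist is absorbed by the skyscraper, $\O_x \otimes \omega_S \cong \O_x$, so that no $\omega_S$-twist survives on $E$. Then $\Ext^{2-i}(\O_x, E) = \Hom(\O_x, E[2-i])$ vanishes as soon as $1 > \phi(E) + 2 - i$, i.e. $i > \phi(E) + 1$; for $E \in \overline{\Pslicing}_{\beta, f^\ast\eta}(0,1)$ this kills all $i \geq 2$, and for $E \in \overline{\Pslicing}_{\beta, f^\ast\eta}(1)$ all $i \geq 3$. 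Combined with the lower bounds, this confines $\Ext^\bullet(E, \O_x)$ to $\{0,1\}$ and $\{0,1,2\}$ respectively, which by the criterion above yields the asserted complexes in degrees $[-1,0]$ and $[-2,0]$.

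Strictly speaking the Hom-vanishing arguments apply to a \emph{semistable} object of a fixed phase, so for part (1) I would first establish the $\Ext$-bounds for each Harder--Narasimhan factor of $E$ (all of phase in $(0,1)$) and then propagate them to $E$ itself through the long exact sequences attached to the HN filtration, since the admissible window $\{0,1\}$ is stable under extensions; part (2) concerns a single phase and needs no such step. The main obstacle I anticipate is essentially bookkeeping: pinning down the two slicing/Serre-duality inequalities so that they cut out exactly the degree windows $[-1,0]$ and $[-2,0]$, and confirming the precise indexing in the Tor-amplitude criterion. Conceptually, the only nontrivial input beyond Lemma \ref{lemma:idheart_sspoints} is the observation $\O_x \otimes \omega_S \cong \O_x$, which is what lets the proof run without the $K3$ hypothesis of \cite{Bri08}.
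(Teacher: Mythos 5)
Your proposal is correct and is essentially the paper's own proof: the paper disposes of this corollary by saying that Bridgeland's argument for \cite{Bri08}*{Lemma 10.1} applies verbatim, and that argument is exactly what you reconstruct — phase-based vanishing of $\Ext^i(E,\O_x)$ using the semistability of skyscrapers from Lemma \ref{lemma:idheart_sspoints}, Serre duality with the canonical twist absorbed by $\O_x$ (the one observation needed to run the K3 argument on a general surface), and the locally-free-amplitude criterion of \cite{Bri08}*{Lemma 5.3}, with the correct degree windows $\{0,1\}$ and $\{0,1,2\}$.
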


\begin{proof}
The argument of \cite{Bri08} applies verbatim.
\end{proof}

\begin{lemma} \label{lemma:idheart_noH2}
If $E \in \overline{\Pslicing}_{\beta, f^\ast \eta}(1)$, then $H^{-2}(E) = 0$.
\end{lemma}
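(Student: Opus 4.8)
The plan is to rule out a nonzero $H^{-2}(E)$ by extracting from it a nonzero morphism into $E$ out of an object all of whose Harder--Narasimhan factors have phase strictly larger than $1$, which the slicing axioms forbid.

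First I would set up the reduction. Since $E \in \overline{\Pslicing}_{\beta, f^\ast\eta}(1)$, Corollary \ref{cor:idheart_boundHi}(2) lets me represent $E$ by a complex $[E^{-2} \to E^{-1} \to E^0]$ of locally free sheaves; in particular $H^i(E) = 0$ for $i \notin \{-2,-1,0\}$ and $F := H^{-2}(E) = \ker(E^{-2}\to E^{-1})$ is torsion-free (indeed, as the kernel of a map of vector bundles it is reflexive, hence locally free on the smooth surface $S$). The canonical truncation triangle is
\[ F[2] = \tau_{\le -2} E \xrightarrow{\ u\ } E \longrightarrow \tau_{\ge -1} E \longrightarrow F[3], \]
and $H^{-2}(u)\colon F \to F$ is the identity, so $u \neq 0$ as soon as $F \neq 0$; thus $F \neq 0$ implies $\Hom(F[2], E) \neq 0$. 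Consequently it suffices to prove $\Hom(F[2], E) = 0$. As $E$ is $\overline{\sigma}_{\beta, f^\ast\eta}$-semistable of phase $1$, the orthogonality axiom of the slicing reduces this to showing that every HN factor of $F[2]$ has phase $> 1$, equivalently that every HN factor of the sheaf $F$ has phase $> -1$.

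To bound the phases of the sheaf $F$ I would pass through the Arcara--Bertram slicings. For each $\omega \in \Amp_V$ the heart $\A_{\beta,\omega}$ is a tilt of $\Coh(S)$ at a torsion pair, so $\Coh(S) \subset \Pslicing_{\beta,\omega}((-1,1])$ and all $\sigma_{\beta,\omega}$-HN factors of $F$ have phase $> -1$. Since $\overline{\sigma}_{\beta, f^\ast\eta} = \lim_{\omega \to \lambda}\sigma_{\beta,\omega}$ in $\Stab(S)$ and the phase functions $\sigma \mapsto \phi^\pm_\sigma(F)$ vary continuously for the fixed object $F$, the limiting phases satisfy $\phi^-(F) \ge -1$. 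The whole difficulty is therefore to exclude the boundary value $-1$, and this is the step I expect to be the main obstacle.

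To handle the endpoint, suppose some HN factor of $F$ has phase exactly $-1$; replacing it by a Jordan--Hölder factor I may assume it is a $\overline{\sigma}_{\beta, f^\ast\eta}$-stable object $A$ with $Z_{\beta,\lambda}(A) \in \R_{<0}$, so that $\Im Z_{\beta,\lambda}(A) = \lambda.\ch_1^\beta(A) = 0$ and $\Re Z_{\beta,\lambda}(A) = -\ch_2^\beta(A) + \tfrac V2\ch_0(A) < 0$. Because $\overline{\sigma}_{\beta, f^\ast\eta}$ lies in $\Stab(S)$, stability is open, so $A$ is $\sigma_{\beta,\omega}$-stable for $\omega$ near $\lambda$; feeding this into the weak support property (Lemma \ref{lemma:AB_presupport}) and letting $\omega \to \lambda$ gives the numerical inequality $(\lambda.\ch_1^\beta(A))^2 \ge 2V\ch_0(A)\ch_2^\beta(A)$, i.e. $\ch_0(A)\ch_2^\beta(A) \le 0$. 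If $\ch_0(A) > 0$ this forces $\ch_2^\beta(A) \le 0$, hence $\Re Z_{\beta,\lambda}(A) \ge \tfrac V2\ch_0(A) > 0$, contradicting $\Re Z_{\beta,\lambda}(A) < 0$. The remaining case $\ch_0(A) = 0$ is the genuinely delicate one: here $A$ is supported in dimension $\le 1$ and $\Im Z_{\beta,\lambda}(A)=0$ reads $\eta.f_\ast\ch_1(A) = 0$, so — exactly as in the proof of Lemma \ref{lemma:cc_descriptionbad} — $A$ is (up to shift) built from pure one-dimensional sheaves supported on $\Exc(f)$. I would resolve this by feeding $A$ back into the HN triangle $F' \to F \to A$: the vanishings $H^{-1}(A) = H^0(A) = 0$ force $H^0(F') \cong F$, and comparing the curve-supported cohomology of $A$ with the torsion-freeness (local freeness) of $F$ forces $A=0$. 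An alternative device for this rank-zero case is to prove the lemma by induction on $\ch_0(E)$, using that $A[2]$ is again a phase-$1$ object of strictly smaller rank. Either way one obtains $\Hom(F[2],E) = 0$, hence $F = 0$.
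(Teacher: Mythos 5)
Your reduction is sound as far as it goes: $F := H^{-2}(E) = \ker(E^{-2}\to E^{-1})$ is locally free, the truncation map $F[2]\to E$ is nonzero whenever $F\neq 0$, sheaves lie in $\Pslicing_{\beta,\omega}((-1,1])$ for every ample $\omega$, and continuity of phases gives $\phi^-_{\overline{\sigma}}(F)\geq -1$. So, as you yourself note, everything rides on excluding a Harder--Narasimhan factor of phase exactly $-1$, and this is where the proposal has genuine gaps. First, your case analysis on $\ch_0(A)$ omits $\ch_0(A)<0$, and that case cannot be dispatched by your numerical argument: there the weak support property gives $\ch_2^\beta(A)\geq 0$, which is perfectly consistent with $\Re Z_{\beta,\lambda}(A) = -\ch_2^\beta(A)+\tfrac{V}{2}\ch_0(A)<0$, so no contradiction arises. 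Negative-rank stable objects of phase $\pm 1$ are a real phenomenon, not a formal loophole: if $G$ is a torsion-free $f^\ast\eta$-semistable sheaf of slope $\beta.f^\ast\eta$, then $G[1]$ lies in the heart with $\Im Z_{\beta,f^\ast\eta}(G[1])=0$, hence is semistable of phase one with rank $-\ch_0(G)<0$, and its Jordan--H\"older factors include stable objects of negative rank (compare form (F4) in Lemma \ref{lemma:bridgie_main}).

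Second, the rank-zero case is not actually handled. Lemma \ref{lemma:cc_descriptionbad} does not apply to your $A$: that lemma concerns objects that are $\sigma_{\beta,\omega}$-stable of phase $1/2$ (so $\Re Z_{\beta,\omega}=0$) for \emph{all} $\omega \in \Amp_V$, and its proof uses this vanishing of the real part at every step; your $A$ instead satisfies $\Im Z_{\beta,\lambda}(A)=0$, $\Re Z_{\beta,\lambda}(A)<0$, and is only known to be stable for stability conditions near $\overline{\sigma}_{\beta,f^\ast\eta}$. Worse, pinning down the cohomological amplitude of $A$ --- equivalently of the phase-one object $A[2]$ --- is precisely the content of the lemma you are proving, so the concluding step (``comparing the curve-supported cohomology of $A$ with the local freeness of $F$ forces $A=0$'') is circular as sketched; and the fallback induction on $\ch_0(E)$ has no well-ordering to run on, since objects of the heart can have zero or negative rank (in particular nothing prevents $\ch_0(E)=0$ itself). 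For comparison, the paper's proof never touches Harder--Narasimhan factors of $H^{-2}(E)$: it shows $\Hom(E,\O_x)=0$ for all $x$, uses two spectral sequences to produce a nonzero map $\O_y \to E$ at a point $y$ where $H^{-1}(E)$ is free, and reaches a contradiction by playing the resulting short exact sequences in $\A_{\beta,\omega}$ and in $\Coh(S)$ against each other; its only stability input is Lemma \ref{lemma:idheart_sspoints} on skyscraper sheaves, whose numerical class is fixed, so no classification of phase $\pm 1$ stable objects is ever needed.
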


\begin{proof}
It suffices to prove it for stable objects, which we prove by contradiction. Assume that $E \in \overline{\Pslicing}_{\beta, f^\ast \eta}(1)$ is stable with $H^{-2}(E)\neq 0$. We claim that $\Hom^0(E, \O_x)=0$ for all $x$. Otherwise, we will have a short exact sequence
\[ 0 \to E \to \O_x \to \O_x/E \to 0 \]
in $\overline{\Pslicing}_{\beta, f^\ast \eta}((0, 1])$. Taking cohomology sheaves (with respect to the standard t-structure) shows that $H^{-3}(\O_x/E) = H^{-2}(E) \neq 0$, a contradiction with Corollary \ref{cor:idheart_boundHi}. It follows that $\Hom^0(E, \O_x)=0$, and so $E$ is a two-term complex $[E^{-2} \to E^{-1}]$. In particular, we get that $H^0(E)=0$.

Let us look at the spectral sequence
\[ E_2^{p, q} = \Hom^p(H^{-q}(E), \O_x) \Rightarrow \Hom^{p+q}(E, \O_x). \]
Our previous discussion ensures that $E_2^{p, q}=0$ unless $0 \leq p \leq 2$ and $1 \leq q \leq 2$. From the convergence and the fact that $\Hom^3(E, \O_x)=0$, we get the vanishing $\Hom^1(H^{-2}(E), \O_x)=0$ for all $x$. It follows that $H^{-2}(E)$ is locally free (and non-zero).

Now, pick some $y \in S$ such that $H^{-1}(E)$ is free around $y$ (possibly of rank zero!). The spectral sequence 
\[ E_2^{p, q} = \Hom^p(\O_y, H^q(E)) \Rightarrow \Hom^{p+q}(\O_y, E) \]
ensures that $\Hom(\O_y, E) \cong \Hom^2(\O_y, H^{-2}(E)) \cong \Hom(H^{-2}(E), \O_y)^\vee \neq 0$. We get a nonzero map $\O_y \to E$. But $E$ is stable of phase 1, hence we get a short exact sequence in $\Pslicing_{\beta, f^\ast\eta}((0, 1])$:
\begin{equation} \label{eq:idheart_sesforH-2}
0 \to K \to \O_y \to E \to 0.
\end{equation}

Pick some $\omega \in \Amp(S)$ with $\omega^2=V$, and let us look at the cohomology objects of \eqref{eq:idheart_sesforH-2} with respect to the heart $\A_{\beta, \omega}$. We get $\HC^{-1}_{\A_{\beta, \omega}}(K)=0$, and the exact sequence
\[ 0 \to \HC^{-1}_{\A_{\beta, \omega}}(E) \to \HC^0_{\A_{\beta, \omega}}(K) \to \O_y \to \HC^0_{\A_{\beta, \omega}}(E) \to 0 \]
in $\A_{\beta, \omega}$. As $\O_y$ is simple in $\A_{\beta, \omega}$, the last map is either zero on an isomorphism. But the second case is impossible, as we will get a non-zero map $E \to \HC^0_{\A_{\beta, \omega}}(E) =\O_y$, a contradiction. It follows that $\HC^0_{\A_{\beta, \omega}}(E)=0$, so that both $K$ and $E[-1]$ lie in $\A_{\beta, \omega}$, and \eqref{eq:idheart_sesforH-2} gives us the short exact sequence in $\A_{\beta, \omega}$:
\begin{equation} \label{eq:idheart_sesforH-2_two}
0 \to E[-1] \to K \to \O_y \to 0.
\end{equation}

At last, let us look at the cohomology sheaves of \eqref{eq:idheart_sesforH-2_two} with respect to the standard t-structure. As $H^0(E)=0$, we get the sequence
\[ 0 \to H^{-1}(E) \to H^0(K) \to \O_y \to 0 \]
in $\Coh(S)$. But $H^{-1}(E)$ is free at $y$, hence this sequence splits. This splitting, together with the fact that $H^{-1}(E) \cong H^0(K)$ is free, implies that the sequence of \eqref{eq:idheart_sesforH-2_two} admits a splitting. But this contradicts that the map $\O_y \to E$ is nonzero.
\end{proof}

\begin{remark}
The proof of Lemma \ref{lemma:idheart_noH2} differs from the one on \cite{Bri08} substantially. In the latter, one directly concludes from the fact all $\O_x$ are stable. Even knowing that one $\O_x$ is stable should be enough. Instead, our argument is significantly more involved, and the stability of (some) $\O_x$ will be checked later on. 
\end{remark}

\begin{cor}[\cite{Bri08}*{Lemma 10.1(d)}] \label{cor:idheart_torspair}
For each $E \in \overline{\Pslicing}_{\beta, f^\ast\eta}((0, 1])$, we have $H^i(E)=0$ unless $i=0, -1$. Thus, the pair of subcategories 
\[ \Ttors = \Coh(S) \cap \overline{\Pslicing}_{\beta, f^\ast \eta}((0, 1]), \quad \Ftors = \Coh(S) \cap \overline{\Pslicing}_{\beta, f^\ast \eta}((-1, 0]) \]
defines a torsion pair on $\Coh(S)$, so that $\overline{\Pslicing}_{\beta, f^\ast \eta}((0, 1])$ is the corresponding tilt.
\end{cor}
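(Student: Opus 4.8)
The plan is to reduce the cohomological vanishing to the two cases already handled in Corollary \ref{cor:idheart_boundHi} and Lemma \ref{lemma:idheart_noH2}, and then to invoke the standard correspondence between tilts and torsion pairs. Write $\overline{\A} := \overline{\Pslicing}_{\beta, f^\ast\eta}((0, 1])$ for the heart in question, and let $E \in \overline{\A}$. Its Harder--Narasimhan factors have phase in $(0, 1]$; since $1$ is the largest phase that can occur, isolating the (at most one) factor of phase exactly $1$ produces a distinguished triangle $P \to E \to Q \to P[1]$ with $P \in \overline{\Pslicing}_{\beta, f^\ast\eta}(1)$ (possibly zero) and $Q \in \overline{\Pslicing}_{\beta, f^\ast\eta}(0, 1)$. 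By Corollary \ref{cor:idheart_boundHi}(1) we have $H^i(Q) = 0$ for $i \neq -1, 0$, and by Corollary \ref{cor:idheart_boundHi}(2) together with Lemma \ref{lemma:idheart_noH2} the same holds for $P$. Running the long exact sequence of standard cohomology sheaves of the triangle, every $H^i(E)$ with $i \neq -1, 0$ is trapped between $H^i(P) = 0$ and $H^i(Q) = 0$, hence vanishes; this is the first assertion.

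Next I would check that $(\Ttors, \Ftors)$ is a torsion pair. Orthogonality is immediate: if $T \in \Ttors \subseteq \overline{\A}$ and $F \in \Ftors$, then $F[1] \in \overline{\A}$, so $\Hom(T, F) = 0$ because $\Hom(\overline{\A}, \overline{\A}[-1]) = 0$. For the decomposition property, the amplitude bound just proved yields inclusions between the aisles of the standard t-structure and of the $\overline{\A}$-t-structure, and passing to orthogonal complements forces any sheaf $E \in \Coh(S)$ to have $\overline{\A}$-cohomology concentrated in degrees $0$ and $1$. The associated truncation triangle $\HC^0_{\overline{\A}}(E) \to E \to \HC^1_{\overline{\A}}(E)[-1]$, read through the long exact sequence of standard cohomology, shows that $\HC^0_{\overline{\A}}(E)$ is a sheaf lying in $\overline{\A} \cap \Coh(S) = \Ttors$ and that $\HC^1_{\overline{\A}}(E)[-1]$ is a sheaf whose shift lies in $\overline{\A}$, so it lies in $\Ftors$. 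This is precisely a short exact sequence $0 \to \HC^0_{\overline{\A}}(E) \to E \to \HC^1_{\overline{\A}}(E)[-1] \to 0$ in $\Coh(S)$ realizing the torsion decomposition of $E$.

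Finally, the tilt of $\Coh(S)$ at $(\Ttors, \Ftors)$ is the extension closure $\langle \Ftors[1], \Ttors \rangle$. Both $\Ttors \subseteq \overline{\A}$ and $\Ftors[1] \subseteq \overline{\A}$ hold by the very definition of $\Ttors$ and $\Ftors$, and $\overline{\A}$ is closed under extensions, so the tilt is contained in $\overline{\A}$; two hearts of bounded t-structures on $\DC^b(S)$ satisfying such a containment must coincide, which gives the final claim. I expect no serious obstacle: the one substantive input, the vanishing $H^{-2} = 0$ for phase-one objects, is already provided by Lemma \ref{lemma:idheart_noH2}, and the rest is formal. The only point demanding care is the degree bookkeeping in the middle step, where the cohomological amplitude of $\overline{\A}$ must be converted into the amplitude of the $\overline{\A}$-cohomology of an ordinary sheaf.
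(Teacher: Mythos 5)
Your proof is correct and takes essentially the same approach as the paper: the paper likewise deduces the vanishing $H^i(E)=0$ for $i \neq 0,-1$ by combining Corollary \ref{cor:idheart_boundHi} with Lemma \ref{lemma:idheart_noH2}, and for the torsion-pair and tilt statement it invokes the argument of \cite{Bri08}*{Lemma 10.1} "verbatim," which is exactly the formal aisle-comparison and truncation argument you spell out. The only difference is one of exposition—you make explicit the Harder--Narasimhan splitting into the phase-one piece and the phase-$(0,1)$ piece, and the orthogonality/decomposition details that the paper leaves to the citation.
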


\begin{proof}
The first part is Corollary \ref{cor:idheart_boundHi} and Lemma \ref{lemma:idheart_noH2}. The argument of \cite{Bri08} applies verbatim for the second part.
\end{proof}

We recall that for every $E \in \Coh(S)$, there is a (unique) short exact sequence
\begin{equation} \label{eq:idheart_sestorspair}
0 \to Q \to E \to F \to 0
\end{equation}
of coherent sheaves, with $Q \in \Ttors$ and $F \in \Ftors$. The cohomology objects of $E$ with respect to the heart $\overline{\Pslicing}_{\beta, f^\ast\eta}((0, 1])$ can be computed from here as $\HC^0(E) = Q$, $\HC^1(E) = F[1]$,
so that \eqref{eq:idheart_sestorspair} corresponds to the triangle $\HC^0(E) \to E \to \HC^1(E)[-1] \to \ast$.

\begin{remark} \label{remark:idheart_lescohomology}
Consider a short exact sequence $0\to E_1 \to E_2 \to E_3 \to 0$ of coherent sheaves, and let us look at the induced long exact sequence of cohomology objects with respect to $\overline{\Pslicing}_{\beta, f^\ast\eta}((0, 1])$. This gives us an exact sequence
\[ 0 \to Q_1 \to Q_2 \to Q_3 \to F_1[1] \to F_2[1] \to F_3[1] \to 0 \]
in $\overline{\Pslicing}_{\beta, f^\ast\eta}((0, 1])$. In particular, we get a short exact sequence
\begin{equation} \label{eq:idheart_auxseqtorspair}
0 \to Q_1 \to Q_2 \to \ker(Q_3 \to F_1[1]) \to 0
\end{equation}
in $\overline{\Pslicing}_{\beta, f^\ast\eta}((0, 1])$. Note that the right hand side is a subobject of $Q_3$ in $\overline{\Pslicing}_{\beta, f^\ast\eta}((0, 1])$, hence it is also a coherent sheaf. From here \eqref{eq:idheart_auxseqtorspair} is also a short exact sequence of coherent sheaves, realizing $Q_2$ as an extension of $Q_1$ and a subobject (both in $\overline{\Pslicing}_{\beta, f^\ast\eta}((0, 1])$ and in $\Coh(S)$) of $Q_3$.
\end{remark}

\begin{lemma} \label{lemma:idheart_pointsonT}
Consider the torsion pair $\Ttors, \Ftors$ of Corollary \ref{cor:idheart_torspair}.
\begin{enumerate}
\item For every $x \in S$, we have $\O_x \in \Ttors$. 
\item If $F$ is a torsion sheaf on $\Ftors$, then $F$ is supported on $\Exc(f)$. 
\end{enumerate}
\end{lemma}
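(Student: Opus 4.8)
The plan is to treat the two parts separately: part (1) follows essentially for free from the preceding lemma, while part (2) combines a torsion-pair orthogonality argument with a positivity computation for the imaginary part of the central charge.

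For part (1), I would simply invoke Lemma \ref{lemma:idheart_sspoints}: it shows that $\O_x$ is $\overline{\sigma}_{\beta, f^\ast\eta}$-semistable of phase one, so $\O_x \in \overline{\Pslicing}_{\beta, f^\ast\eta}(1) \subseteq \overline{\Pslicing}_{\beta, f^\ast\eta}((0,1])$. Since $\O_x$ is also a coherent sheaf, it lies in $\Coh(S) \cap \overline{\Pslicing}_{\beta, f^\ast\eta}((0,1]) = \Ttors$, which is exactly the claim.

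For part (2), let $F \in \Ftors$ be torsion, and let $T_0 \subseteq F$ be its maximal zero-dimensional subsheaf. The first step is to show $F$ is pure of dimension one. Since $T_0$ is an iterated extension of skyscraper sheaves $\O_x$, and each $\O_x \in \Ttors$ by part (1), closure of the torsion class $\Ttors$ under extensions gives $T_0 \in \Ttors$. But $T_0 \subseteq F \in \Ftors$, and $\Hom(\Ttors, \Ftors) = 0$ by the torsion pair of Corollary \ref{cor:idheart_torspair}; hence $T_0 = 0$ and $F$ is pure of dimension one (or zero, in which case there is nothing to prove). The second step pins down the support via the imaginary part of $Z_{\beta, f^\ast\eta}$. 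Because $\ch_0(F)=0$ we have $\ch_1^\beta(F)=\ch_1(F)$, so $\Im Z_{\beta, f^\ast\eta}(F) = f^\ast\eta.\ch_1(F) = \eta.f_\ast\ch_1(F)$ by the projection formula. As $\ch_1(F)$ is effective, its pushforward is an effective class on $T$, and $\eta$ is ample, so this quantity is $\geq 0$, with equality exactly when $f_\ast\ch_1(F)=0$, i.e. when $\Supp(F) \subseteq \Exc(f)$. On the other hand, every object of $\overline{\Pslicing}_{\beta, f^\ast\eta}((-1,0])$ has all Harder--Narasimhan phases in $(-1,0]$, so its central charge has non-positive imaginary part; thus $\Im Z_{\beta, f^\ast\eta}(F) \leq 0$. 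Combining the two bounds forces $\Im Z_{\beta, f^\ast\eta}(F)=0$, hence $f_\ast\ch_1(F)=0$ and $\Supp(F)\subseteq\Exc(f)$.

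The main obstacle is the purity reduction in the first step: without it, a zero-dimensional component of $F$ supported away from $\Exc(f)$ would be invisible to $\ch_1$ and therefore to the imaginary-part argument. It is precisely part (1), together with the orthogonality of the torsion pair, that rules such components out and lets the central-charge positivity detect the entire support.
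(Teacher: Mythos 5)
Your proof is correct and matches the paper's argument in all essentials: part (2) is exactly the paper's proof (purity of $F$ via part (1) and closure of $\Ftors$ under subobjects, then forcing $\Im Z_{\beta, f^\ast\eta}(F)=0$ by squeezing the sign of $f^\ast\eta.\ch_1(F)$ between effectivity and the half-plane constraint on the heart). The only, harmless, divergence is in part (1): you quote Lemma \ref{lemma:idheart_sspoints} directly (legitimate, since it is established just before), whereas the paper argues self-containedly by decomposing $\O_x$ with respect to the torsion pair, using that $\O_x$ is simple, and excluding $\O_x \in \Ftors$ because $Z_{\beta, f^\ast\eta}(\O_x[1])=1$ does not lie in the semiclosed upper half-plane --- both are one-line deductions from the same positivity of the central charge on the heart.
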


\begin{proof}
The basic idea is to use the torsion pair from Corollary \ref{cor:idheart_torspair}, together with the fact that $Z_{\beta, f^\ast \eta}$ maps the heart to the upper half-space.
\begin{enumerate}
\item Write $0 \to Q \to \O_x \to F \to 0$ in $\Coh(S)$, where $Q \in \Ttors$ and $F \in \Ftors$. As $\O_x$ is a simple sheaf, we have that either $Q=0$ or $F=0$, i.e. $\O_x$ is in $\Ftors$ or in $\Ttors$. But the first case will imply $\O_x[1] \in \Pslicing_{\beta, f^\ast \eta}((0, 1])$. This is a contradiction, as $Z_{\beta, f^\ast \eta}(\O_x[1])=1$ does not lie in the upper half-space.

\item Let $F$ be a torsion sheaf on $\Ftors$, so that $F[1] \in \Pslicing_{\beta, f^\ast \eta}((0, 1])$. Note first that $F$ is pure of dimension one, as $\Ftors$ is closed under subobjects and by the previous point. Now, let us compte
\[ Z_{\beta, f^\ast \eta}(F[1]) = \left( \ch_2(F)-\beta.\ch_1(F) \right) - if^\ast\eta.\ch_1(F). \]
Here $f^\ast \eta.\ch_1(F)$ must be smaller or equal than zero, as $Z_{\beta, f^\ast\eta}(F[1])$ lies on the upper half-space. But $F$ is torsion, hence $f^\ast \eta.\ch_1(F)\geq 0$. It follows that $f^\ast \eta.\ch_1(F)=0$, and so $\eta.f_\ast \ch_1(F)=0$. As $\eta$ is ample in $T$, we get that $\ch_1(F)$ is supported in $\Exc(f)$, and so is $F$. \qedhere
\end{enumerate}
\end{proof}

\begin{cor} \label{cor:idheart_sheavesonP1}
Let $i\colon C\cong \P^1 \hookrightarrow S$ be a smooth irreducible component of $\Exc(f)$ and let $d \in \Z$ be an integer. Then $i_\ast \O_C(d)$ is in $\Ttors$ (resp. $\Ftors$) if $d>\beta.C+C^2/2$ (resp. $d<\beta.C+C^2/2$).
\end{cor}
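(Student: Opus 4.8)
The statement I want to prove is Corollary \ref{cor:idheart_sheavesonP1}: for a smooth rational curve $i\colon C \cong \P^1 \hookrightarrow S$ that is a component of $\Exc(f)$, the sheaf $i_\ast \O_C(d)$ lies in $\Ttors$ when $d > \beta.C + C^2/2$ and in $\Ftors$ when $d < \beta.C + C^2/2$. My strategy is to compute $Z_{\beta, f^\ast\eta}$ on this sheaf, and use the description of the torsion pair already established: $\Ttors = \Coh(S) \cap \overline{\Pslicing}_{\beta, f^\ast\eta}((0,1])$ and $\Ftors = \Coh(S) \cap \overline{\Pslicing}_{\beta, f^\ast\eta}((-1,0])$. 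The sheaf $i_\ast\O_C(d)$ is pure of dimension one, supported on the exceptional locus, so its image under $Z_{\beta,f^\ast\eta}$ will lie on the real axis (the imaginary part vanishes by the argument in Lemma \ref{lemma:idheart_pointsonT}(2)). The sign of the real part should then decide membership in $\Ttors$ versus $\Ftors$, but this requires first knowing that $i_\ast\O_C(d)$ itself (rather than some subsheaf) is $\overline{\sigma}$-semistable, so I cannot read off the answer from $Z$ alone.

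\textbf{Key computation.} First I would compute the Chern character. Since $\ch_0(i_\ast\O_C(d)) = 0$, $\ch_1 = [C]$, and by Riemann--Roch (Grothendieck--Riemann--Roch for the closed immersion $i$) one gets $\ch_2(i_\ast\O_C(d)) = d + 1 + C^2/2$ after accounting for $\ch_2(i_\ast\O_C) = 1 + C^2/2$ (computed via $\chi(\O_C) = 1$ and the adjunction-type correction). Then
\[ \Re Z_{\beta, f^\ast\eta}(i_\ast\O_C(d)) = -\ch_2^\beta = -\left( d + 1 + \frac{C^2}{2} - \beta.C \right), \]
while $\Im Z_{\beta,f^\ast\eta} = f^\ast\eta.\ch_1 = f^\ast\eta.C = \eta.f_\ast C = 0$ because $C$ is contracted. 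So $Z_{\beta,f^\ast\eta}(i_\ast\O_C(d))$ is real, and its sign is opposite to that of $d - (\beta.C + C^2/2) + 1$. I should be careful to track the exact shift: the threshold in the statement is $d \gtrless \beta.C + C^2/2$, so I expect the constant to be handled by comparing phases rather than signs of the raw real part; I would recompute to confirm whether the relevant quantity is $\ch_2^\beta$ with phase $0$ versus phase $1$, following the convention that $\O_x[-1]\in\B_{\beta,V}$ has phase zero.

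\textbf{Deducing membership.} To conclude, I would argue as in Lemma \ref{lemma:idheart_pointsonT}. Write the torsion-pair sequence $0 \to Q \to i_\ast\O_C(d) \to F \to 0$ with $Q\in\Ttors$, $F\in\Ftors$. Both $Q$ and $F$ are pure one-dimensional (or zero) torsion sheaves supported on $C$, hence have purely real $Z$-values. Membership in $\overline{\Pslicing}((0,1])$ forces $\Im Z \ge 0$ with the phase-$1$ objects ($F[1]$) having $\Re Z \ge 0$ and $\Ttors$-objects having $\Re Z \le 0$ along the appropriate half. When $d > \beta.C + C^2/2$ the real part of $Z(i_\ast\O_C(d))$ has the sign forcing the object into phase $(0,1]$; the main obstacle is ruling out a nontrivial decomposition, i.e.\ showing that no proper subsheaf or quotient of $i_\ast\O_C(d)$ sits on the wrong side of the torsion pair. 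For this I would use that $\O_C(d)$ is stable on $\P^1$ (it is a line bundle on a smooth curve, so its only subsheaves are $\O_C(d')$ with $d'<d$, plus torsion), and check that each such subsheaf's $Z$-value is consistent with lying in $\Ttors$; symmetrically for quotients in the $\Ftors$ case. The inequality $d > \beta.C + C^2/2$ ensures every subsheaf $\O_C(d')$ with $d' \le d$ still satisfies the $\Ttors$ condition, so $Q = i_\ast\O_C(d)$ and $F = 0$, giving the claim; the $\Ftors$ case is dual.
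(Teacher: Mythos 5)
Your overall strategy---decompose $i_\ast\O_C(d)$ by the torsion pair of Corollary \ref{cor:idheart_torspair}, observe that $Z_{\beta,f^\ast\eta}$ of this sheaf is real because $C$ is contracted, and let the sign of the real part decide membership---is exactly the paper's route, but both of your main steps contain genuine errors. First, the Chern character computation is wrong: Grothendieck--Riemann--Roch (equivalently, Riemann--Roch on $S$ together with adjunction $K_S.C = -2 - C^2$) gives $\ch_2(i_\ast\O_C(d)) = d - C^2/2$, not $d + 1 + C^2/2$; in particular $\ch_2(i_\ast\O_C) = -C^2/2$, not $1 + C^2/2$. With the correct value,
\[ \Re Z_{\beta,f^\ast\eta}(i_\ast\O_C(d)) = -\left( d - \frac{C^2}{2} - \beta.C \right), \]
whose sign flips exactly at the threshold $d = \beta.C + C^2/2$ of the statement; there is no leftover ``$+1$'' and no phase-convention subtlety to invoke. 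You noticed the mismatch yourself and deferred it (``I would recompute''), but as written your argument establishes a statement with the wrong threshold.

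Second, your mechanism for ruling out a nontrivial decomposition $0 \to Q \to i_\ast\O_C(d) \to F \to 0$ does not work. You claim that when $d > \beta.C + C^2/2$ ``every subsheaf $\O_C(d')$ with $d' \le d$ still satisfies the $\Ttors$ condition'': this is false---for $d' < \beta.C + C^2/2$ the value $Z_{\beta,f^\ast\eta}(i_\ast\O_C(d'))$ is a positive real number, incompatible with lying in the heart---and it is also not the relevant question, since membership in $\Ttors$ is not detected by the sign of $Z$ alone, and $\Ttors$ is closed under quotients rather than subsheaves. The actual key point concerns the quotient $F$: since $i_\ast\O_C(d)$ is a line bundle on an integral curve, any proper nonzero quotient is supported in dimension zero, and a zero-dimensional sheaf can never lie in $\Ftors$, because every such sheaf contains some $\O_x$ as a subsheaf, $\Ftors$ is closed under subsheaves, and $\O_x \in \Ttors$ by Lemma \ref{lemma:idheart_pointsonT}(1). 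This forces $F = 0$ or $Q = 0$, i.e.\ either $i_\ast\O_C(d)$ or $i_\ast\O_C(d)[1]$ lies in $\overline{\Pslicing}_{\beta,f^\ast\eta}((0,1])$, and only then does the sign of the (real, nonzero) central charge pick out which of the two occurs: a nonzero object of the heart with real central charge must have $\Re Z < 0$. Your proposal never uses Lemma \ref{lemma:idheart_pointsonT}(1) in this role, and that application is the actual content of the paper's proof.
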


\begin{proof}
Write $0 \to Q \to i_\ast \O_C(d) \to F \to 0$, where $Q \in \Ttors$ and $F \in \Ftors$. Note that either $F=0$, $F=i_\ast \O_C(d)$, or $F$ is supported on dimension zero. But the last one cannot happen, as the elements of $\Ftors$ cannot have zero-dimensional torsion by the previous lemma.

We get that $F=0$ or $Q=0$, and so $i_\ast \O_C(d)$ or $i_\ast \O_C(d)[1]$ is in $\overline{\Pslicing}_{\beta, f^\ast\eta}((0, 1])$. Here $Z_{\beta, f^\ast\eta}(i_\ast \O_C(d)) = -(d-C^2/2) + \beta.C$, which allows us to conclude.
\end{proof}

\subsection{Heart and slopes}

Before we move on, let us relate the tilting pair $\Ttors$ and $\Ftors$ with $f^\ast\eta$-slope stability. This allows us to directly relate the heart with the standard constructions in the literature: \citelist{\cite{Bri08}*{\textsection 6} \cite{AB13}*{p. 6}} for an ample divisor, and \citelist{\cite{Tod13}*{\textsection 3.4} \cite{Tod14}*{\textsection 2.4} \cite{TX22}*{\textsection 3--4} \cite{Cho24}*{\textsection 3}} for big and nef divisors.

\begin{lemma} \label{lemma:hslope_torsIm0}
\begin{enumerate}
\item Let $Q \in \Ttors$ be an object with $\Im Z_{\beta, f^\ast\eta}(Q)=0$. We have that $Q$ is a torsion sheaf. 
\item Let $F \in \Ftors$ be an object with $\Im Z_{\beta, f^\ast\eta}(F)=0$. We have that $F$ is $f^\ast\eta$-slope semistable.
\end{enumerate}
\end{lemma}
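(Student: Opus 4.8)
The plan is to treat the two parts separately, proving \textbf{(2)} first since it is the softer of the two, and then reusing the same sign analysis, reinforced by an inequality, for \textbf{(1)}. Throughout I write $V=(f^\ast\eta)^2$, so that $\Im Z_{\beta,f^\ast\eta}(E)=f^\ast\eta.\ch_1^\beta(E)$.

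For \textbf{(2)} I use that $\Ftors$ is the torsion-free part of the torsion pair of Corollary \ref{cor:idheart_torspair}, hence closed under subobjects in $\Coh(S)$. First I record that every $F'\in\Ftors$ satisfies $\Im Z_{\beta,f^\ast\eta}(F')\le 0$: indeed $F'[1]\in\overline{\Pslicing}_{\beta,f^\ast\eta}((0,1])$ lies in the heart, so $\Im Z_{\beta,f^\ast\eta}(F'[1])\ge 0$, and this quantity equals $-\Im Z_{\beta,f^\ast\eta}(F')$. Now given $F\in\Ftors$ with $\Im Z_{\beta,f^\ast\eta}(F)=0$ and any subsheaf $F'\subseteq F$, closedness under subobjects gives $F'\in\Ftors$, hence
\[ f^\ast\eta.\ch_1^\beta(F')=\Im Z_{\beta,f^\ast\eta}(F')\le 0. \]
If $\ch_0(F')>0$ this reads $\mu_{f^\ast\eta}(F')\le \beta.f^\ast\eta=\mu_{f^\ast\eta}(F)$, which is exactly the slope inequality; if $\ch_0(F')=0$ then $F'$ is torsion, so $f^\ast\eta.\ch_1(F')\ge 0$ forces $f^\ast\eta.\ch_1(F')=0$, and by Lemma \ref{lemma:idheart_pointsonT} such an $F'$ is supported on $\Exc(f)$ with $f^\ast\eta$-degree zero, so it does not destabilize. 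Thus $F$ is $f^\ast\eta$-slope semistable (the case $\ch_0(F)=0$ being immediate).

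For \textbf{(1)} let $Q\in\Ttors$ with $\Im Z_{\beta,f^\ast\eta}(Q)=0$. Using that $\Ttors$ is closed under quotients I pass to the torsion-free quotient $G=Q/T(Q)$, which again lies in $\Ttors$; since $\Im Z_{\beta,f^\ast\eta}(T(Q))=f^\ast\eta.\ch_1(T(Q))\ge0$ while $G$ lies in the heart, additivity forces $\Im Z_{\beta,f^\ast\eta}(G)=0$, and it suffices to show $G=0$. Passing to the minimal $f^\ast\eta$-slope quotient $G''$ of $G$ (again in $\Ttors$, hence with $\mu_{f^\ast\eta}(G'')\ge\beta.f^\ast\eta$) and comparing with $\mu_{f^\ast\eta}(G)=\beta.f^\ast\eta$ shows that $G$ is $f^\ast\eta$-slope semistable of slope $\beta.f^\ast\eta$. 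Finally, since $G$ is a nonzero object of the heart with $\Im Z_{\beta,f^\ast\eta}(G)=0$, its central charge lies on the negative real axis, so $\Re Z_{\beta,f^\ast\eta}(G)<0$, i.e. $\ch_2^\beta(G)>\tfrac V2\ch_0(G)$.

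The main obstacle is this last \emph{boundary} case, where $G$ is slope-semistable with slope lying exactly on the wall $\mu_{f^\ast\eta}=\beta.f^\ast\eta$: here every sub- and quotient sheaf sits on the same wall, so the sign arguments above produce no contradiction and a genuine inequality is needed. I will feed $\ch_2^\beta(G)>\tfrac V2\ch_0(G)$ into the Bogomolov--Gieseker inequality $\ch_1^\beta(G)^2\ge 2\ch_0(G)\ch_2^\beta(G)$ for the big and nef class $f^\ast\eta$, and combine it with the Hodge Index Theorem $(f^\ast\eta.\ch_1^\beta(G))^2\ge (f^\ast\eta)^2\,\ch_1^\beta(G)^2$. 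This yields $0=(\Im Z_{\beta,f^\ast\eta}(G))^2> V^2\ch_0(G)^2\ge 0$, forcing $\ch_0(G)=0$; as $G$ is torsion-free this gives $G=0$, so $Q=T(Q)$ is a torsion sheaf. Alternatively one can keep the argument internal to the paper by transferring the weak support inequality of Lemma \ref{lemma:AB_presupport} to the limit $\omega_i\to f^\ast\eta$, which gives $\ch_2^\beta(G)\le 0$ directly; the delicate point there is ensuring that $G$ stays $\sigma_{\beta,\omega_i}$-semistable along the approach.
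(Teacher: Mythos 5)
Your proof is correct and takes essentially the same route as the paper: reduce to the torsion-free quotient (your additivity argument for $\Im Z_{\beta,f^\ast\eta}$ makes explicit the paper's ``we may assume $Q$ is torsion-free''), deduce $f^\ast\eta$-slope semistability from the sign of $\Im Z_{\beta,f^\ast\eta}$ on quotients in $\Ttors$ (resp.\ subobjects in $\Ftors$), and then contradict the fact that a nonzero heart object with vanishing imaginary part must have $\Re Z_{\beta,f^\ast\eta}<0$. The Bogomolov--Gieseker plus Hodge index computation you write out in the boundary case is precisely what the paper compresses into its citation of \cite{AB13}, so the two arguments coincide in substance.
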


\begin{proof}
\begin{enumerate}
\item Let us argue by contradiction. Note that we may assume that $Q$ is torsion-free, as $\Ttors$ is closed under quotients of coherent sheaves. 

We claim that $Q$ is $f^\ast\eta$-semistable. In fact, if $Q'$ is any quotient of $Q$, the fact that $\Im Z_{\beta, f^\ast\eta}(Q')\geq 0$ implies that $\mu_{f^\ast\eta}(Q') \geq \beta.f^\ast\eta = \mu_{f^\ast\eta}(Q)$. But this implies that $\Re Z_{\beta, f^\ast\eta}(Q)>0$, cf. \cite{AB13}*{p. 7}. This contradicts the fact that $Z_{\beta, f^\ast\eta}(Q)$ lies in the upper half-space.

\item The previous argument works verbatim. \qedhere
\end{enumerate}
\end{proof}

\begin{lemma} \label{lemma:hslope_ssintorsionpair}
Consider the torsion pair $\Ttors, \Ftors$ of Corollary \ref{cor:idheart_torspair}. Given $E \in \Coh(S)$, denote by $0 \to Q \to E \to F \to 0$ the short exact sequence with $Q \in \Ttors$ and $F \in \Ftors$.
\begin{enumerate}
\item Assume that $E \in \Coh(S)$ is a torsion-free $f^\ast\eta$-semistable sheaf with slope $\mu_{f^\ast\eta}(E) \leq \beta.f^\ast\eta$. Then $Q=0$ and $E=F$; in particular, $E \in \Ftors$.
\item Assume that $E \in \Coh(S)$ is a torsion-free $f^\ast \eta$-semistable sheaf with slope $\mu_{f^\ast\eta}(E)> \beta.f^\ast \eta$. Then $F$ is zero or supported on $\Exc(f)$, pure of dimension one. Moreover, we have that $Q$ is $f^\ast\eta$-semistable.
\end{enumerate}
\end{lemma}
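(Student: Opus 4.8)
The plan is to reduce everything to the single numerical identity
\[ \Im Z_{\beta, f^\ast\eta}(G) = f^\ast\eta.\ch_1^\beta(G) = \ch_0(G)\bigl(\mu_{f^\ast\eta}(G) - \beta.f^\ast\eta\bigr), \]
valid for any torsion-free sheaf $G$, combined with the sign constraints coming from the torsion pair: every nonzero $Q \in \Ttors$ lies in the heart $\overline{\Pslicing}_{\beta, f^\ast\eta}((0,1])$, so $\Im Z_{\beta, f^\ast\eta}(Q) \geq 0$, while every $F \in \Ftors$ has $F[1]$ in the heart, so $\Im Z_{\beta, f^\ast\eta}(F) \leq 0$. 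I would also record at the outset that $\Im Z_{\beta, f^\ast\eta}$ vanishes on any sheaf supported on $\Exc(f)$, since there $f^\ast\eta.\ch_1 = \eta.f_\ast\ch_1 = 0$.

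For (1), write $0 \to Q \to E \to F \to 0$ with $Q \in \Ttors$, $F \in \Ftors$. As $E$ is torsion-free so is the subsheaf $Q$; suppose $Q \neq 0$. By $f^\ast\eta$-semistability, $\mu_{f^\ast\eta}(Q) \leq \mu_{f^\ast\eta}(E) \leq \beta.f^\ast\eta$, so the identity forces $\Im Z_{\beta, f^\ast\eta}(Q) \leq 0$; together with $Q \in \Ttors$ this gives $\Im Z_{\beta, f^\ast\eta}(Q) = 0$. Then Lemma \ref{lemma:hslope_torsIm0}(1) makes $Q$ a torsion sheaf, contradicting that $Q$ is torsion-free and nonzero. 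Hence $Q = 0$ and $E = F \in \Ftors$.

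For (2), again take $0 \to Q \to E \to F \to 0$. The one genuinely new feature, and the main obstacle, is that $F$ need not be torsion-free, so I would peel off its torsion subsheaf $T \subseteq F$ and set $\bar F = F/T$. As a subobject, $T$ lies in $\Ftors$, so by Lemma \ref{lemma:idheart_pointsonT}(2) it is supported on $\Exc(f)$ and hence $\Im Z_{\beta, f^\ast\eta}(T) = 0$. The sheaf $\bar F$ is a torsion-free quotient of $E$; were it nonzero, semistability would give $\mu_{f^\ast\eta}(\bar F) \geq \mu_{f^\ast\eta}(E) > \beta.f^\ast\eta$ and so $\Im Z_{\beta, f^\ast\eta}(\bar F) > 0$. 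But additivity yields $\Im Z_{\beta, f^\ast\eta}(F) = \Im Z_{\beta, f^\ast\eta}(\bar F) \leq 0$ since $F \in \Ftors$, a contradiction; thus $\bar F = 0$ and $F = T$ is torsion, supported on $\Exc(f)$. Purity of dimension one then follows because a zero-dimensional subsheaf would contain some $\O_x \in \Ttors$ by Lemma \ref{lemma:idheart_pointsonT}(1), impossible as $\Ttors \cap \Ftors = 0$ and $\Ftors$ is closed under subobjects. Finally, since $F$ is supported on $\Exc(f)$ we have $\ch_0(F) = 0$ and $f^\ast\eta.\ch_1(F) = 0$, so $Q \subseteq E$ has the same rank and $f^\ast\eta$-degree as $E$, giving $\mu_{f^\ast\eta}(Q) = \mu_{f^\ast\eta}(E)$; every subsheaf of $Q$ is a subsheaf of $E$ and hence has slope $\leq \mu_{f^\ast\eta}(E) = \mu_{f^\ast\eta}(Q)$, so $Q$ is $f^\ast\eta$-semistable. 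The only care needed throughout is the torsion bookkeeping: keeping the torsion-free quotient $\bar F$ as the object that would violate the upper-half-plane constraint, while the torsion part is invisible to $\Im Z_{\beta, f^\ast\eta}$ precisely because it is contracted by $f$.
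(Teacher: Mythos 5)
Your proof is correct and follows essentially the same route as the paper: part (1) is verbatim the paper's argument (slope bound forces $\Im Z_{\beta,f^\ast\eta}(Q)=0$, then Lemma \ref{lemma:hslope_torsIm0} makes $Q$ torsion, a contradiction), and part (2) fills in the details the paper compresses into one sentence, namely using semistability and the sign constraints from the torsion pair to kill the torsion-free part of $F$, then citing Lemma \ref{lemma:idheart_pointsonT} for the support and purity of $F$, and deducing semistability of $Q$ from that of $E$.
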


\begin{proof}
\begin{enumerate}
\item Assume that $Q \neq 0$. Then $\mu_{f^\ast\eta}(Q)\leq \beta.f^\ast \eta$, hence $\Im Z_{\beta, f^\ast\eta}(Q)\leq 0$. The only option is that $\Im Z_{\beta, f^\ast\eta}(Q)=0$, and so $Q$ is torsion by Lemma \ref{lemma:hslope_torsIm0}. This is a contradiction with the fact that $E$ is torsion free, hence $Q=0$.

\item To start, note that $F$ must be torsion, as a direct consequence of the $f^\ast\eta$-semistability of $E$ and Lemma \ref{lemma:hslope_torsIm0}. The properties of $F$ follows from Lemma \ref{lemma:idheart_pointsonT}, and semistability of $Q$ follows directly from the semistability of $E$. \qedhere
\end{enumerate}
\end{proof}

\begin{cor} \label{cor:hslope_HNfactors}
\begin{enumerate}
\item Let $E$ be an object in $\Ftors$. Then all Harder--Narasimhan factors of $E$ with respect to $f^\ast \eta$-slope are in $\Ftors$.
\item Let $E$ be an object in $\Ttors$, and let $0 = E_0 \subseteq E_1 \subseteq \dots \subseteq E_r = E$ be its Harder--Narasimhan filtration with respect to $f^\ast\eta$-slope.

Then for every $1 \leq i\leq r$, we have that there is a subsheaf $\tilde{E}_i \subseteq E_i$ such that $E_i/\tilde{E}_i$ is torsion, supported on $\Exc(f)$, and $\tilde{E}_i$ lies in $\Ttors$. 
\end{enumerate}
\end{cor}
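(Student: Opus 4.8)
The plan is to run both parts through the torsion-pair decomposition of Corollary~\ref{cor:idheart_torspair}, converting membership in $\Ttors$ or $\Ftors$ into a sign condition on $\Im Z_{\beta, f^\ast\eta}$, and hence into a comparison of $f^\ast\eta$-slopes against $\beta.f^\ast\eta$. The one structural input that makes everything fit is that any torsion sheaf lying in $\Ftors$ is supported on $\Exc(f)$ (Lemma~\ref{lemma:idheart_pointsonT}(2)), so its $\ch_1$ is $f^\ast\eta$-orthogonal and it is invisible to the slope; dually, the points $\O_x$ lie in $\Ttors$ (Lemma~\ref{lemma:idheart_pointsonT}(1)). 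Throughout I would use that $\Ftors$ is closed under subsheaves and $\Ttors$ under quotients, together with the convention that torsion sheaves have $f^\ast\eta$-slope $+\infty$, so that the torsion subsheaf sits at the top of any Harder--Narasimhan filtration.

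For part (1), I would first note that since $E \in \Ftors$ and $\Ftors$ is closed under subsheaves, every term $E_i$ of the filtration lies in $\Ftors$, so $\Im Z_{\beta, f^\ast\eta}(E_i) \le 0$. If the filtration has a torsion factor it can only be $A_1 = E_1$, which is then a subsheaf of $E$ and hence already in $\Ftors$. For the torsion-free factors I would bound the top slope: writing $A_j$ for the first torsion-free factor and using that the (possible) torsion factor $A_1$ is $\Exc$-supported, hence $f^\ast\eta$-orthogonal, the inequality $\Im Z_{\beta, f^\ast\eta}(E_j) \le 0$ reads $\mu_{f^\ast\eta}(A_j) \le \beta.f^\ast\eta$. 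Since slopes decrease along the filtration, every torsion-free factor then has slope $\le \beta.f^\ast\eta$, and Lemma~\ref{lemma:hslope_ssintorsionpair}(1) places each of them in $\Ftors$.

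For part (2), the candidate is $\tilde E_i := Q_i$, the $\Ttors$-part in the decomposition $0 \to Q_i \to E_i \to F_i \to 0$ of Corollary~\ref{cor:idheart_torspair}; then $\tilde E_i \in \Ttors$ and $E_i/\tilde E_i = F_i \in \Ftors$ by construction, and the whole content is to prove that $F_i$ is torsion, after which Lemma~\ref{lemma:idheart_pointsonT}(2) forces its support into $\Exc(f)$. Assuming $E$ has positive rank, I would first show every torsion-free factor of $E$ has slope strictly above $\beta.f^\ast\eta$: the minimal-slope factor $A_r = E/E_{r-1}$ is torsion-free and is a quotient of $E \in \Ttors$, hence lies in $\Ttors$, so $\Im Z_{\beta, f^\ast\eta}(A_r) \ge 0$; equality would force $A_r$ torsion by Lemma~\ref{lemma:hslope_torsIm0}(1), which is impossible, so $\mu_{f^\ast\eta}(A_r) > \beta.f^\ast\eta$ and therefore $\mu_{\min}(E_i) = \mu_i > \beta.f^\ast\eta$ for every $i$. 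Now suppose $F_i$ had positive rank. Its torsion-free quotient is a torsion-free quotient of $E_i$, so its slope is $\ge \mu_{\min}(E_i) > \beta.f^\ast\eta$; on the other hand, computing $\Im Z_{\beta, f^\ast\eta}(F_i) \le 0$ and discarding the $\Exc$-supported (hence $f^\ast\eta$-orthogonal) torsion subsheaf of $F_i$ yields exactly the opposite slope bound, a contradiction. Hence $F_i$ is torsion, completing the step; the purely torsion case (where all $E_i \subseteq E$ are torsion, so $F_i$ is automatically torsion) is immediate.

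The main obstacle I anticipate is the bookkeeping around the degeneracy of the $f^\ast\eta$-slope: because $f^\ast\eta$ is only nef, the $\Exc$-supported sheaves are slope-invisible and must be peeled off at each slope comparison, and one must keep straight which torsion lands in $\Ttors$ (the points) versus in $\Ftors$ (the pure one-dimensional pieces on $\Exc(f)$). The genuinely load-bearing estimates are the standard facts that torsion-free quotients do not decrease $\mu_{\min}$ and subsheaves do not increase $\mu_{\max}$; everything else is a translation between the sign of $\Im Z_{\beta, f^\ast\eta}$ and these slope comparisons.
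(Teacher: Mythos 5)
Your proposal is correct. For part (1) you use the same ingredients as the paper (closure of $\Ftors$ under subsheaves, the sign of $\Im Z_{\beta,f^\ast\eta}$ on $\Ftors$, and Lemma \ref{lemma:hslope_ssintorsionpair}(1)); the only difference is bookkeeping: you bound the slope of the \emph{first} torsion-free factor from above and let the decreasing slopes do the rest, while the paper inducts on the length of the filtration, peeling off the \emph{last} factor at each step.

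For part (2) your route genuinely differs from the paper's, although the candidate $\tilde{E}_i = Q_i$ (the $\Ttors$-part of $E_i$) is the same. The paper first treats $i=1$ via the dichotomy ``$E_1$ torsion / $E_1$ torsion-free semistable,'' invoking Lemma \ref{lemma:hslope_ssintorsionpair}(2) in the second case, and then extends to general $i$ by induction using the long exact sequence of $\Ttors$-parts from Remark \ref{remark:idheart_lescohomology}. You instead prove a uniform bound once and for all: since $\Ttors$ is closed under quotients, the minimal HN factor $A_r$ of $E$ lies in $\Ttors$, so $\Im Z_{\beta,f^\ast\eta}(A_r) \geq 0$, and equality is excluded by Lemma \ref{lemma:hslope_torsIm0}(1); hence $\mu_{\min}(E_i) \geq \mu_r > \beta.f^\ast\eta$ for every $i$. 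Then each $F_i = E_i/Q_i$ is forced to be torsion by a single slope contradiction (its torsion-free part would be a torsion-free quotient of $E_i$ of slope $> \beta.f^\ast\eta$, against $\Im Z_{\beta,f^\ast\eta}(F_i) \leq 0$ after discarding the $f^\ast\eta$-orthogonal torsion), and Lemma \ref{lemma:idheart_pointsonT}(2) finishes. What your argument buys is the elimination of both the induction on $i$ and Remark \ref{remark:idheart_lescohomology}, plus an explicit explanation of why the low-slope case of Lemma \ref{lemma:hslope_ssintorsionpair}(1) can never occur inside the filtration of an object of $\Ttors$ — a point the paper's terse case analysis leaves implicit. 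What the paper's approach buys is uniformity with the surrounding lemmas (it reuses Lemma \ref{lemma:hslope_ssintorsionpair}(2) and the machinery of Remark \ref{remark:idheart_lescohomology}, which is needed elsewhere, e.g.\ in Remark \ref{remark:hslope_useHNfactors}). Both proofs rest on the same load-bearing facts: the torsion-pair closure properties, the sign of $\Im Z_{\beta,f^\ast\eta}$ on $\Ttors$ and $\Ftors$, and the $f^\ast\eta$-invisibility of torsion sheaves in $\Ftors$.
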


\begin{proof}
\begin{enumerate}
\item We induct by the length of the Harder--Narasimhan filtration of $E$, where the base case is clear.

Assume that $E \in \Ftors$ has a Harder--Narasimhan filtration of length at least two, and write $0 \to E' \to E \to E_k \to 0$, where $E_k$ is the last Harder--Narasimhan factor of $E$ with respect to $f^\ast \eta$-slope. Note that $\mu_{f^\ast\eta}(E_k)<\beta.f^\ast \eta$, as otherwise $\Im Z_{\beta, f^\ast\eta}(E[1])<0$. This way, Lemma \ref{lemma:hslope_ssintorsionpair} ensures that $E_k \in \Ftors$. Note that $E' \in \Ftors$ holds immediately (as $\Ttors, \Ftors$ is a torsion pair), which allows us to conclude by induction. 

\item Let us focus on the case $i=1$. Take $\tilde{E}_1$ to be the unique subsheaf such that $\tilde{E}_1 \in \Ttors$ and $E_1/\tilde{E}_1 \in \Ftors$, cf. \eqref{eq:idheart_sestorspair}. If $\tilde{E}_1$ is torsion, then $E_1/\tilde{E}_1 \in \Ftors$ is torsion, hence supported on $\Exc(f)$ by Lemma \ref{lemma:idheart_pointsonT}. If $\tilde{E}_1$ is torsion-free $f^\ast\eta$-semistable, we apply Lemma \ref{lemma:hslope_ssintorsionpair}. 

The general case follows by the torsion free case, and inducting on $i$. The induction step follows directly by Remark \ref{remark:idheart_lescohomology}. \qedhere
\end{enumerate}
\end{proof}

\begin{remark} \label{remark:hslope_useHNfactors}
We will mostly use this result in the following form. If $E$ is in $\Ttors$ with positive rank, and $E/E_{r-1}$ is the last Harder--Narasimhan factor, then there is a short exact sequence $0 \to \tilde{E}_{r-1} \to E \to E/\tilde{E}_{r-1} \to 0$ both in $\Coh(S)$ and in $\overline{\Pslicing}_{\beta, f^\ast\eta}((0, 1])$ (as the three factors are in $\Ttors$). Moreover, the quotient $E/\tilde{E}_{r-1}$ fits into the short exact sequence $0 \to E_{r-1}/\tilde{E}_{r-1} \to E/\tilde{E}_{r-1} \to E/E_{r-1} \to 0$ of coherent sheaves.
\end{remark}

\subsection{Proof of Theorem \ref*{teo:intro_nonexistence}} \label{subsec:pfnonexistence}

Let us briefly recall the setup of Theorem \ref{teo:intro_nonexistence}. Consider a birational map $f\colon S \to T$ from a smooth, projective surface to a normal one. Assume that for some $\eta \in \Amp(T)_\R$, $\beta \in \NS(T)_\R$, a stability condition $\overline{\sigma}_{\beta, f^\ast\eta} \in \Stab(S)$ arises as the limit of Arcara--Bertram stability conditions $\sigma_{\beta', \omega}$ as $\omega$ approaches $f^\ast\eta$. Our goal is to show that $f$ cannot have exceptional curves that are smooth of genus $g\geq 1$. We argue by contradiction, so assume $C \subset \Exc(f)$ is such a curve.

To start, we can assume that $\eta$ and $\beta$ have rational coefficients, by Bridgeland's deformation theorem. Write $\beta.C+C^2/2 = d/r$, where $d, r$ are coprime and $r>0$. As $C$ is a curve of genus $g\geq 1$, there exists a stable vector bundle $E$ on $C$ with degree $d$ and rank $r$. 

Now, consider the pushforward $i_\ast E$ under the inclusion $i\colon C \to S$, and look at the sequence \eqref{eq:idheart_sestorspair}. We can write it as $0 \to i_\ast Q \to i_\ast E \to i_\ast F \to 0$, where $i_\ast Q \in \Ttors$ and $i_\ast F \in \Ftors$. Here either $Q$ or $F$ is non-zero.

Assume that $Q \neq 0$, and denote by $d'$ and $r'$ its degree and rank, respectively. As $E$ is stable, we get that $d'/r' \leq d/r$ (note that $Q=E$ is possible). 
By Grothendieck--Riemann--Roch we have that $\ch(i_\ast Q)=0+r'[C]+(d'-C^2\cdot r'/2)[pt]$, and so 
\[ Z_{\beta, f^\ast \eta}(i_\ast Q) = \left( -d'+\frac{C^2 r'}{2} + r'\beta.C \right). \]
Using that $d'/r' \leq d/r$, it follows that $Z_{\beta, f^\ast \eta}(i_\ast Q) \geq 0$, contradicting that $i_\ast Q \in \Ttors \subset \overline{\Pslicing}_{\beta, f^\ast \eta}((0, 1])$. The same argument applies if we assume that $F \neq 0$, proving the result.

\section{Proof of Theorem \ref*{teo:intro_existence}: Pre-stability} \label{sec:prestab}

The goal of the next two sections is to prove Theorem \ref{teo:intro_existence}. To do so, let us fix some notation. We start with a birational map $f\colon S \to T$ from a smooth, projective surface to a normal surface. We assume that each connected component $C$ of $\Exc(f)$ falls into one of the two types:
\begin{itemize}
\item (ADE) $C$ is the exceptional divisor of the minimal resolution of an ADE singularity.

\item (Chain) $C=C_1 \cup \dots \cup C_r$ is a chain of rational curves, with $C_i^2 + k <0$, where $k$ is the number of curves intersecting $C_i$.
\end{itemize}
We pick $\beta \in \NS(S)_\Q$ satisfying the following restriction.

\begin{condition} \label{cond:prestab_beta}
For any connected subset of $r$ curves $C_1, \dots, C_r$ of $\Exc(f)$, we have that $\beta.\sum_{i=1}^r \delta_i C_i + (\sum_{i=1}^r C_i^2)/2$ is not an integer, where the $\delta_i$ are chosen by looking at the dual graph of the $C_i$ from the following list.
\begin{itemize}
\item Case chain: All the $\delta_i$ are $1$.

\item Case ADE: The $\delta_i>0$ are chosen such that $\sum_i \delta_i C_i$ is the \emph{fundamental cycle} of the curves, cf. \cite{Rei97}*{\textsection 4.5}.
\end{itemize}
\end{condition}

\begin{remark} \label{remark:prestab_fundcycle}
Note that the fundamental cycle has the property that if $\left(\sum_i a_i C_i \right)^2=-2$, and $a_i>0$ for all $i$, then $a_i=\delta_i$, cf. \cite{Rei97}*{p. 107}. This is the only result we will need about these values in the remainder.
\end{remark}

With this in mind, we will prove that for any $\eta \in \Amp(T)_\Q$, there is a pre-stability condition $\overline{\sigma}_{\beta, f^\ast\eta}$ on $S$, arising as a limit of the Arcara--Bertram stability conditions on $\Stab(S)$. To do so, we will apply the results of Subsection \ref{subsec:cc} by using the next technical proposition, whose proof will be given in the next two subsections.

\begin{prop} \label{prop:prestab_critstable}
Let $E \in \Coh(S)$ be a sheaf on $S$, pure of dimension one, with support connected and contained in $\Exc(f)$.
\begin{itemize}
\item (ADE) Assume that $E$ is supported on $C_1 \cup \dots \cup C_r$, where the $C_i$ are curves on the minimal resolution of an ADE singularity. If $\Hom(E, E)$ is one-dimensional, then $\ch_1(E) = \delta_1 C_1 + \dots + \delta_r C_r$, where the $\delta_i$ are as in Condition \ref{cond:prestab_beta}.

\item (Chain) Assume that $E$ is supported on $C_1 \cup \dots \cup C_r$, where $C_1, \dots, C_r$ form a chain of rational curves satisfying $C_i^2+k<0$, where $k$ is the number of curves of the chain intersecting $C_i$. Let us denote $\tilde{f}\colon S \to \tilde{T}$ the partial contraction\footnote{We recall that this contraction always exists as a projective scheme by \cite{Art62}.} of $C_1 \cup \dots \cup C_r$.

Suppose that $E$ is stable with respect to the slope
\[ \lambda_{\beta, \omega}(-) = \frac{\ch_2(-)-\beta.\ch_1(-)}{\omega.\ch_1(-)} \]
defined on $\Coh(S)_{\leq 1}$, where $\omega \in \Amp(S)$ is of the form $\tilde{f}^\ast\tilde{\eta}-\epsilon(C_1+\dots+C_r)$ for some $0 <\epsilon\ll 1$ and $\tilde{\eta} \in \Amp(\tilde{T})$. Then $\ch_1(E)=C_1 + \dots + C_r$. 
\end{itemize}
\end{prop}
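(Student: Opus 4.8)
The plan is to treat the two cases by genuinely different mechanisms: the ADE case is controlled by a numerical Euler-characteristic computation exploiting crepancy, while the chain case requires a direct analysis of the weighted slope. In both cases I would write $\ch_1(E) = \sum_{i=1}^r a_i C_i$; since $E$ is pure of dimension one and supported on $C_1 \cup \dots \cup C_r$, each $a_i \ge 1$, and the intersection form on $\{C_i\}$ is negative definite (for the chain this is exactly the strict diagonal dominance encoded by $C_i^2 + k_i < 0$, with $k_i$ the number of neighbours of $C_i$).

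\emph{ADE case.} First I would record that for a dimension-one sheaf a Hirzebruch--Riemann--Roch computation gives the Euler pairing $\chi(E,E) = \sum_i (-1)^i \dim\Ext^i(E,E) = -\ch_1(E)^2$. The hypothesis $\dim\Hom(E,E)=1$ already forces the support to be connected (a disconnected support would split $E$ and make $\Hom(E,E)$ at least two-dimensional), so the $C_i$ form a connected, hence again ADE, sub-configuration. Next I would use crepancy: for the minimal resolution of an ADE singularity $K_S = f^\ast K_T$ with $K_T$ trivial in a Zariski neighbourhood of the singular point, so $K_S$ is trivial on a neighbourhood $U$ of $\Exc(f)$; since $\Supp(E)\subset U$ this yields $E \otimes K_S \cong E$. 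Serre duality then gives $\dim\Ext^2(E,E)=\dim\Hom(E,E\otimes K_S)=\dim\Hom(E,E)=1$, whence $-\ch_1(E)^2 = \chi(E,E) = 2 - \dim\Ext^1(E,E) \le 2$, i.e. $\ch_1(E)^2 \ge -2$. Since the $C_i$ span an even, negative-definite lattice, every nonzero class has $\ch_1(E)^2 \le -2$; combined with the previous bound this forces $\ch_1(E)^2 = -2$. As all $a_i > 0$, Remark \ref{remark:prestab_fundcycle} identifies $\sum a_i C_i$ with the fundamental cycle, i.e. $a_i = \delta_i$, as desired.

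\emph{Chain case.} Here the Euler pairing no longer pins down $\ch_1(E)^2$ (indeed $(\sum C_i)^2$ need not equal $-2$), so I would argue directly from slope stability. Since $\tilde f$ contracts the chain, $\tilde f^\ast\tilde\eta . C_i = 0$ and hence $\omega . C_i = -\epsilon(C_i^2 + k_i) = \epsilon w_i$ with $w_i := -(C_i^2+k_i) > 0$. Writing $\deg(F) := \ch_2(F) - \beta.\ch_1(F)$, the factor $\epsilon$ cancels in every comparison $\lambda_{\beta,\omega}(F) < \lambda_{\beta,\omega}(E)$, so stability is the weighted slope stability with slope $\deg(F)/\sum_i b_i w_i$ (where $\ch_1(F)=\sum_i b_i C_i$); in particular it is independent of $\epsilon$ and of $\tilde\eta$. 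The tool I would use is the normalization $\nu\colon \bigsqcup_i C_i \to \Supp(E)$: since $E$ is pure, the adjunction map $E \to \nu_\ast\nu^\ast E = \bigoplus_i E|_{C_i}$ is injective with zero-dimensional cokernel supported at the nodes, and each $E|_{C_i}$ is, up to zero-dimensional torsion, a bundle $\bigoplus_{j=1}^{a_i}\O_{C_i}(e_{ij})$ on $C_i \cong \P^1$. Selecting the top summand on each component produces a subsheaf $E' \subseteq E$ with $\ch_1(E') = \sum_i C_i$, and dually a quotient $E \twoheadrightarrow G$ with $\ch_1(G) = \sum_i C_i$.

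If some $a_i \ge 2$ then $E'$ is a proper subsheaf and $G$ a proper quotient, and stability forces $\deg(E')/\sum_i w_i < \deg(E)/\sum_i a_i w_i < \deg(G)/\sum_i w_i$. I would then run an induction on $\sum_i a_i$: the complementary sub- and quotient-sheaves carry class $\sum_i(a_i-1)C_i$, and feeding the top-summand inequalities back through the chain produces a subobject of slope $\ge \lambda_{\beta,\omega}(E)$, contradicting stability unless every $a_i = 1$. I expect this last step to be the main obstacle, for two reasons: one must control the zero-dimensional (node) torsion appearing in the kernels and cokernels of the restriction maps, which contributes to $\deg$ but not to $\ch_1$; and one must close the chain of weighted-slope inequalities so that a single destabilizing object emerges. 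This is exactly where the hypotheses $C_i^2 + k_i < 0$ (equivalently $w_i > 0$ together with negative-definiteness) are indispensable, and where the single-curve argument of \cite{Tod13} has to be upgraded to the whole chain.
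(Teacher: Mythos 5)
Your ADE argument is essentially identical to the paper's: crepancy plus Serre duality gives $\dim \Ext^2(E,E) = \dim\Hom(E,E) = 1$, hence $\chi(E,E) = 2 - \dim\Ext^1(E,E) \leq 2$; Hirzebruch--Riemann--Roch gives $\chi(E,E) = -\ch_1(E)^2$; and evenness plus negative definiteness of the ADE lattice forces $\ch_1(E)^2 = -2$, so Remark \ref{remark:prestab_fundcycle} applies. No issues there.

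The chain case, however, has a genuine gap. Your first structural step --- ``since $E$ is pure, the adjunction map $E \to \nu_\ast\nu^\ast E$ is injective with zero-dimensional cokernel'' --- is valid only when $E$ is scheme-theoretically an $\O_C$-module for the \emph{reduced} chain $C$. Purity of dimension one does not give this: the pushforward of the structure sheaf of the first infinitesimal neighbourhood of $C_1$ is pure of dimension one with set-theoretic support $C_1$, yet the kernel of its restriction map to $C_1$ is a line bundle on $C_1$, not a zero-dimensional sheaf; such sheaves have $\ch_1 = 2C_1$ and are invisible to your normalization argument. Ruling out exactly these thickened sheaves among stable ones is the main content of the paper's proof: Proposition \ref{prop:chain_main} shows that a $\lambda_{\beta,\omega}$-stable pure sheaf supported on the chain has reduced scheme-theoretic support. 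The paper's mechanism is to take the maximal $k$ with $\O_S(-kC)\cdot E \neq 0$, write $E$ as an extension $0 \to i_\ast F \to E \to i_\ast G \to 0$ over the $(k-1)$st thickening $C'$, use the triangle of Lemma \ref{lemma:chain_triangleadjunction} to extract a nonzero map $i_\ast G \otimes \O_S(-C') \to i_\ast F$ from the nontriviality of the extension class, and then derive a slope contradiction from the fact that twisting by $\O_S(-C')$ shifts the $\lambda_{\beta,\omega}$-slope of \emph{every} sheaf supported on the chain by the same constant $k/\epsilon$ --- this is where the special shape $\omega = \tilde{f}^\ast\tilde{\eta}-\epsilon(C_1+\dots+C_r)$ is actually used (your observation that stability for this $\omega$ is a weighted slope condition independent of $\epsilon$ is correct, but it is put to work differently). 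Once reducedness is established, the paper needs none of your top-summand induction: stability implies indecomposability, and Theorem \ref{teo:chain_tf} (Drozd--Greuel) says an indecomposable pure sheaf on the reduced chain supported on all of it is a line bundle, so $\ch_1(E) = C_1+\dots+C_r$ immediately. Even granting reduced support, your induction is not closed: sub-line-bundles of $\nu_\ast\nu^\ast E$ need not be subsheaves of $E$ (only their intersections with $E$ are, with uncontrolled degree drop), and you yourself flag the final chain of inequalities as an unresolved obstacle. The fix is to prove reducedness first and then quote the Drozd--Greuel classification.
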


This proposition has a twofold objective. On one hand, we will use it to prove the existence of pre-stability conditions $\overline{\sigma}_{\beta, f^\ast\eta}$. On the other hand, this will be used in Section \ref{sec:supp} as part of the proof of the support property.

\begin{proof}[Proof of Theorem \ref{teo:intro_existence}, pre-stability]
Let $f\colon S \to T$, $\beta \in \NS(S)_\Q$ and $\eta \in \Amp(T)_\Q$ be as in the statement. We claim that $Z_{\beta, f^\ast\eta}$ defines a (skewed) central charge on the heart\footnote{Recall from Lemma \ref{lemma:heart_indep} that $\B_{\beta, V} = \Pslicing_{\beta, \omega}((-1/2, 1/2])$ and $\Cheart_{\beta, V} = \Pslicing_{\beta, \omega}(1/2)$, where $\omega$ is ample with $\omega^2=V$.} $\B_{\beta, V}$, where $V=\eta^2$.

To show this, we will apply the criterion of Subsection \ref{subsec:cc}. Assume that there is a non-zero $E \in \Cheart_{\beta, V}$ minimal, with $\Im_{\beta, f^\ast\eta}(E)=0$. By Lemma \ref{lemma:cc_descriptionbad} we may assume that $E$ is a coherent sheaf on $S$, pure of dimension one, whose support is connected and contained in $\Exc(f)$, and with $\Hom(E, E)=\C$. Now, by assumption $\Exc(f)$ has two types of connected components.
\begin{itemize}
\item (ADE) If $\Supp(E)$ is contained in a fiber over an ADE singularity, we have that $\Supp(E)$ is supported on $C_1 \cup \dots \cup C_r$, where the $C_i$ are some of the curves in the fiber. We also have that $\Hom(E, E)=\C$. By Proposition \ref{prop:prestab_critstable}, we get that $\ch_1(E)=\delta_1 C_1 + \dots + \delta_r C_r$. An application of Grothendieck--Riemann--Roch shows that $\ch_2(E) \in \Z$, and so
\[ -\Re Z_{\beta, \omega}(E) = \ch_2(E)-\beta.(\delta_1C_1+\dots+\delta_rC_r). \]

\item (Chain) Assume that $\Supp(E)$ is contained over a fiber over a (non-ADE) chain of rational curves, say $\Supp(E) =C_1 \cup \dots \cup C_r$ (set-theoretically). Pick $\tilde{f}\colon S \to \tilde{T}$ by contracting the curves $C_1 \cup \dots \cup C_r$ to a point.

Now, note that if $\tilde{\eta} \in \Amp(\tilde{T})$ is ample in $\tilde{T}$, then $\omega = \tilde{f}^\ast\tilde{\eta}-\epsilon(C_1+\dots+C_r)$ is ample in $\tilde{S}$ for $0<\epsilon \ll 1$. In fact, we note that $\omega.C_i =\epsilon(-C_i^2+k)$, where $k$ is the number of curves of $\Exc(\tilde{f})$ intersecting $C_i$. Our assumption ensures that this is positive. By taking $\epsilon$ small enough we can ensure that $\omega^2>0$ and $\omega.\Gamma>0$ for all $\Gamma \not\subset\Exc(\tilde{f})$, hence the Nakai--Moishezon criterion applies. We pick $\tilde{\eta}$ and $\epsilon$ so that $\omega^2=\eta^2$.

This way, we have that $E \in \Cheart_{\beta, V}$ is an object in $\A_{\beta, \omega}$ that is $\sigma_{\beta, \omega}$-stable. Now, for any torsion sheaf $A \in \Coh(S)_{\leq 1}$ (which are all objects in $\A_{\beta, \omega}$) we get that $-\Re Z_{\beta, \omega}(A)/\Im Z_{\beta, \omega}(A) = \lambda_{\beta, \omega}(A)$. In particular, this shows that $E$ is stable with respect to $\lambda_{\beta, \omega}$. By Proposition \ref{prop:prestab_critstable}, this implies that $\ch_1(E)=C_1+\dots + C_r$. 

At last, another application of Grothendieck--Riemann--Roch shows that
\[ -\Re Z_{\beta, \omega}(E) = -\frac{C_1^2+\dots+C_r}{2}-r+1+d -\beta.(C_1+\dots+C_r), \]
where $d \in \Z$.
\end{itemize}
In both cases, we reach a contradiction to the fact that $\Re Z_{\beta, \omega}(E)=0$ thanks to Condition \ref{cond:prestab_beta}. This shows that no such $E$ exists, hence Proposition \ref{prop:cc_crit} applies. Thus $(Z_{\beta, f^\ast\eta}, \B_{\beta, V})$ defines a (skewed) pre-stability condition, as claimed.
\end{proof}

The rest of the section will be devoted on the proof of Proposition \ref{prop:prestab_critstable}. We will divide the proof into the two parts it encompasses.

\subsection{Criterion for ADE case}

Assume that $E \in \Coh(S)$ is a coherent sheaf on $S$, pure of dimension 1, whose support is contained on the exceptional divisor of the minimal resolution of an ADE singularity. Assume also that $\Hom(E, E)$ is one-dimensional. Write $\Supp(E)=C_1 \cup \dots \cup C_r$, where $C_i$ are all $(-2)$-curves.

\begin{claim}
We have that $\ch_1(E)^2 = -2$. To prove this, note that by assumption we have $\Hom(E, E)=\C$. Now, the fact that $\Supp(E) \subset \Exc(f)$, together with the fact that the minimal resolution of an ADE singularity is crepant, gives us
\[ \Hom^2(E, E) \cong \Hom(E, E \otimes \omega_S)^\vee = \Hom(E, E)^\vee = \C. \]
This way, we get that $\chi(E, E) = 2-\dim \Ext^1(E, E) \leq 2$, cf. \cite{Bri08}*{5.1}. On the other hand, by Hirzebruch--Riemann--Roch we have that $\chi(E, E) = -\ch_1(E)^2$. Putting all together yields $\ch_1(E)^2 \geq -2$.

But $\ch_1(E)$ is supported on the resolution of an ADE singularity. This implies that $\ch_1(E)^2$ is an even number. As we also have $\ch_1(E)^2<0$, the only option therefore is that $\ch_1(E)^2=-2$.
\end{claim}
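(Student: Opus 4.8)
The plan is to pin down $\ch_1(E)^2$ by computing the Euler pairing $\chi(E,E)$ in two independent ways and then invoking the arithmetic of the ADE intersection lattice. First I would bound $\chi(E,E)$ from above using homological data. Since $E$ is simple, $\dim\Hom(E,E)=1$. The crucial input is $\dim\Ext^2(E,E)=1$: by Serre duality on $S$ we have $\Ext^2(E,E)\cong\Hom(E,E\otimes\omega_S)^\vee$, so it suffices to produce an isomorphism $E\otimes\omega_S\cong E$. Here I would use that the minimal resolution of an ADE (Du Val) singularity is crepant. Because ADE singularities are canonical and Gorenstein, $\omega_T$ is locally trivial at the singular point and $K_S=f^\ast K_T$, so $\omega_S$ is trivial on a neighborhood of the exceptional fiber. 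As $\Supp(E)$ lies in this fiber, $E\otimes\omega_S\cong E$, giving $\Ext^2(E,E)\cong\Hom(E,E)^\vee=\C$ and hence $\chi(E,E)=1-\dim\Ext^1(E,E)+1\leq 2$.

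Next I would compute $\chi(E,E)$ geometrically via Hirzebruch--Riemann--Roch. For a sheaf of pure dimension one, $\ch(E)=(0,\ch_1(E),\ch_2(E))$, and the integral $\int_S\ch(E)^\vee\ch(E)\,\td(S)$ collapses --- the $\ch_2(E)$ and Todd-class contributions drop out --- leaving $\chi(E,E)=-\ch_1(E)^2$. Combined with the bound above, this yields $\ch_1(E)^2\geq -2$.

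Finally I would close with a lattice argument. The class $\ch_1(E)$ lies in the sublattice spanned by the exceptional $(-2)$-curves $C_1,\dots,C_r$, whose intersection form is the negative of the relevant ADE Cartan matrix; in particular it is even and negative definite. Evenness forces $\ch_1(E)^2$ to be an even integer, while negative definiteness together with $\ch_1(E)\neq 0$ (which holds because $E$ is a nonzero sheaf of pure dimension one) forces $\ch_1(E)^2<0$. The only negative even integer that is $\geq -2$ is $-2$, so $\ch_1(E)^2=-2$.

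I expect the crepancy step to be the main obstacle: passing from ``crepant resolution'' to the sheaf-level identity $E\otimes\omega_S\cong E$ requires $\omega_S$ to be genuinely trivial (not merely numerically trivial on each $C_i$) on a neighborhood of the fiber, and this is exactly where the Gorenstein/canonical nature of the ADE singularities is indispensable --- a merely numerically crepant contraction would not suffice. Everything else, namely the Hirzebruch--Riemann--Roch collapse and the parity argument, is routine once this isomorphism is secured.
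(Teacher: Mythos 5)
Your proposal is correct and follows essentially the same route as the paper: Serre duality plus crepancy (via the Gorenstein, canonical nature of ADE singularities, making $\omega_S$ trivial near the fiber) to get $\chi(E,E)\leq 2$, Hirzebruch--Riemann--Roch to get $\chi(E,E)=-\ch_1(E)^2$, and evenness plus negative definiteness of the exceptional lattice to conclude. The extra detail you supply on the crepancy step and on why $\ch_1(E)^2<0$ is exactly what the paper leaves implicit.
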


Let us write $\ch_1(E) = \sum_{i=1}^r a_i C_i$, for some $a_i>0$. By Remark \ref{remark:prestab_fundcycle}, we get that $\sum_{i=1}^r a_i C_i$ agrees with the \emph{fundamental cycle} of the curves $C_i$. These are exactly the values of $\delta_i$ from in Condition \ref{cond:prestab_beta}.

\subsection{Criterion for chain case} \label{subsubsec:chain}

Assume $E \in \Coh(S)$ is a coherent sheaf on $S$, pure of dimension 1, supported on $C_1 \cup \dots \cup C_r$. We let $f\colon S \to \tilde{T}$ be the contraction of the $r$ curves. We assume that $E$ is stable with respect to the slope $\lambda_{\beta, \omega}$ defined on $\Coh(S)_{\leq 1}$, where $\omega \in \Amp(S)$ is of the form $\tilde{f}^\ast\tilde{\eta}-\epsilon(C_1+\dots+C_r)$ for some $0 <\epsilon\ll 1$ and $\tilde{\eta} \in \Amp(\tilde{T})$.

\begin{remark}
Note that the slope $\lambda_{\beta, \omega}$ is well defined in $\Coh(S)_{\leq 1}$, if we set $\lambda_{\beta, \omega}(A)=+\infty$ for $A$ supported on dimension zero.
\end{remark}

By assumption we have that $E$ is supported set-theoretically on $C_1 \cup \dots \cup C_r$. The key intermediate step, which is interesting on its own right, ensures that any object like this must be supported \emph{scheme-theoretically} on $C_1 \cup \dots \cup C_r$.

\begin{prop} \label{prop:chain_main}
Let $\tilde{f}\colon S \to \tilde{T}$, $\beta, \eta$, and $\omega$ be as before. Let $A$ be a $\lambda_{\beta, \omega}$-stable sheaf, pure of dimension 1, and whose support equals $\Exc(\tilde{f})$ set-theoretically. Then the scheme-theoretic support of $A$ is reduced, and $A$ is the pushforward of a line bundle on $C_1 \cup \dots \cup C_r$.
\end{prop}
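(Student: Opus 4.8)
The plan is to exploit that, for torsion sheaves supported on the chain, the slope $\lambda_{\beta,\omega}$ depends only on $\beta$ and the intersection matrix of the $C_i$. Writing $E_{\mathrm{tot}}=C_1+\dots+C_r$, one has $\omega.\ch_1(G)=-\epsilon\,E_{\mathrm{tot}}.\ch_1(G)$ for any such $G$ (since $\tilde f^\ast\tilde\eta$ is orthogonal to each $C_i$), so both $\epsilon$ and $\tilde\eta$ cancel and $\lambda_{\beta,\omega}(G)=(\ch_2(G)-\beta.\ch_1(G))/(-E_{\mathrm{tot}}.\ch_1(G))$. Thus stability of $A$ is tested against \emph{all} pure one-dimensional subsheaves supported on the chain, using this combinatorial slope, and the chain hypothesis $C_i^2+k<0$ guarantees the denominators $-E_{\mathrm{tot}}.C_i>0$ are positive. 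I also record that $A$, being pure of dimension one on a smooth surface, is Cohen--Macaulay, so $A=i_{D\ast}M$ for its scheme-theoretic support $D=V(\mathrm{Ann}\,A)$ and a rank-$\geq 1$ torsion-free $\O_D$-module $M$; the goal is to show $D=Z:=C_1\cup\dots\cup C_r$ (reduced) and $M$ a line bundle.

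First I would show that $A$ is locally free at every node $p=C_i\cap C_{i+1}$. If not, then near $p$ the sheaf $A$ is pulled back from the partial normalization separating the two branches at $p$; this produces a subsheaf of $A$ supported on $C_1\cup\dots\cup C_i$ together with a complementary quotient supported on $C_{i+1}\cup\dots\cup C_r$, realizing $A$ as locally a direct sum along the node. Since a slope-stable sheaf is simple, every proper subsheaf strictly lowers the slope and every proper quotient strictly raises it; comparing $\lambda_{\beta,\omega}$ of the two pieces against $\lambda_{\beta,\omega}(A)$ via Grothendieck--Riemann--Roch then yields a contradiction.

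Next I would show the generic rank of $A$ along each $C_i$ is $1$. Suppose the rank along some $C_{i_0}$ is $\geq 2$. Restricting $A$ to $C_{i_0}\cong\P^1$ and quotienting by the torsion subsheaf yields a vector bundle of rank $\geq 2$, which splits as a sum of line bundles on $\P^1$; from the extreme summands I extract either a line-bundle quotient of $A$ of maximal slope or a line-bundle subobject of minimal slope, and push this along the chain to produce a subsheaf or quotient of $A$ violating $\lambda_{\beta,\omega}$-stability. Hence $\ch_1(A)=E_{\mathrm{tot}}$, so $D$ is generically reduced along each component; combined with the Cohen--Macaulay property this forces $D=Z$. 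Finally, a rank-$1$ torsion-free module on the nodal chain $Z$ that is locally free at every node is a line bundle, so $A=i_\ast L$ with $L$ a line bundle on $Z$, the multidegree being free to vary in accordance with the family $\O(d_1,\dots,d_r)$ of \cite{DG01}*{2.7}.

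The main obstacle I anticipate is the slope bookkeeping in the two destabilization arguments: one must correctly compute $\ch_1$ and $\ch_2$ of the subsheaves supported on connected subchains $C_a\cup\dots\cup C_b$ (via adjunction on $Z$, using $p_a(Z)=0$) and verify the \emph{strict} inequalities $\lambda_{\beta,\omega}(\text{sub})<\lambda_{\beta,\omega}(A)<\lambda_{\beta,\omega}(\text{quotient})$ in every case, including the boundary cases where a piece is itself a thickening of a subchain. Organizing the whole argument as an induction on the number $r$ of components---peeling an endpoint $C_1$ at each stage and checking that the gluing of the line bundle on $C_1$ to the line bundle on $C_2\cup\dots\cup C_r$ is nondegenerate, again by simplicity---seems the cleanest way to control the combinatorics.
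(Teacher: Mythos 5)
Your overall architecture (generic rank one along each component $\Rightarrow \ch_1(A)=C_1+\dots+C_r \Rightarrow$ generically reduced support, hence reduced by purity, then classify) could in principle be organized, but the step meant to rule out non-reduced supports has a genuine gap, and that is precisely the hard content of the proposition. You propose to detect multiplicity $\geq 2$ along $C_{i_0}$ by restricting: ``restricting $A$ to $C_{i_0}$ and quotienting by torsion yields a vector bundle of rank $\geq 2$.'' This conflates the multiplicity of $C_{i_0}$ in $\ch_1(A)$ (the generic \emph{length} of $A$ along $C_{i_0}$) with the generic rank of $A|_{C_{i_0}}$ (the minimal number of generators), and the two differ exactly in the dangerous cases: a line bundle $L$ on the ribbon $2C_{i_0}$, or more generally on any thickening of the chain, has $\ch_1$ with multiplicity $\geq 2$ but restriction of rank one, so your step produces no destabilizing object and the thickened line bundles slip through your net entirely. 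Even where the restriction genuinely has rank $\geq 2$, the ``extreme summand'' you extract is a sub or quotient of $A$ whose degree is a priori uncontrolled; stability only forbids quotients of slope $\leq \lambda_{\beta,\omega}(A)$ and subsheaves of slope $\geq \lambda_{\beta,\omega}(A)$, and nothing in the sketch forces the extracted line bundle to land on the wrong side. Separately, your step 1 applies the structure theory of torsion-free sheaves on nodal curves (partial normalization) \emph{before} reducedness is known, and a local direct-sum decomposition at a node does not globalize, so ``comparing the two pieces against simplicity'' is not a valid contradiction as stated.

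The mechanism you are missing is the one the paper builds the proof around, and ironically you record its main ingredient without exploiting it: since $\omega.\ch_1(B)=-\epsilon\,C.\ch_1(B)$ for any $B$ supported set-theoretically on the chain $C=C_1+\dots+C_r$, twisting by $\O_S(-C')$ for a thickening $C'$ shifts $\lambda_{\beta,\omega}$ by the \emph{constant} $k/\epsilon>0$, independent of $B$, so Harder--Narasimhan filtrations are preserved under the twist. The paper takes the maximal $k$ with $\O_S(-kC)\cdot A\neq 0$, writes $0 \to i_\ast F \to A \to i_\ast G \to 0$ with $F, G$ on the $(k-1)$st thickening $C'$, and uses the triangle of Lemma \ref{lemma:chain_triangleadjunction} to show that if this extension does not come from $C'$, then $\Hom_S(i_\ast G \otimes \O_S(-C'), i_\ast F)\neq 0$; combined with the uniform slope shift and the stability inequalities $\lambda_{\beta,\omega,\mathrm{max}}(i_\ast F) < \lambda_{\beta,\omega}(A) < \lambda_{\beta,\omega,\mathrm{min}}(i_\ast G)$, this is a contradiction. (For the ribbon this already destabilizes a line bundle via its canonical subsheaf $G \otimes \O_S(-C)|_C$, whose slope exceeds that of the quotient by $1/\epsilon$.) Once reducedness is in hand, your steps 1 and 3 are superseded: the paper concludes directly from stable $\Rightarrow$ indecomposable and \cite{DG01}*{2.7}, which you quote only for the multidegree bookkeeping but which does all the remaining work.
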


The proof relies in two important facts. The first one is a computation of $Lj^\ast j_\ast$ for an effective Cartier divisor on a variety. This is a well-known result, which we reproduce to emphasize that the statement holds even if the divisor is non-reduced.

\begin{lemma}[cf. \cite{Huy06}*{11.4}] \label{lemma:chain_triangleadjunction}
Let $X$ be a smooth, projective variety and let $j\colon Y \to X$ be an effective Cartier divisor. Given any coherent sheaf $E \in \Coh(Y)$, there is an exact triangle $E \otimes_Y \O_X(-Y)|_Y[1] \to Lj^\ast j_\ast E \to E \to \ast$.
\end{lemma}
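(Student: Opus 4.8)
The plan is to prove Lemma \ref{lemma:chain_triangleadjunction} by taking a locally free resolution of $\O_Y$ as an $\O_X$-module and applying the projection formula. Since $j\colon Y \to X$ is an effective Cartier divisor, the structure sheaf $\O_Y$ admits the standard two-term locally free resolution on $X$, namely
\begin{equation*}
0 \to \O_X(-Y) \xrightarrow{s} \O_X \to j_\ast\O_Y \to 0,
\end{equation*}
where $s$ is the canonical section cutting out $Y$. Note that this resolution is valid regardless of whether $Y$ is reduced: all that is needed is that $Y$ is Cartier, so that $\O_X(-Y)$ is a line bundle and the ideal sheaf of $Y$ is $\O_X(-Y)$. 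This is precisely why the statement holds for non-reduced divisors.

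First I would reduce the computation of $Lj^\ast j_\ast E$ to a derived tensor product on $X$. For $E \in \Coh(Y)$, since $j$ is a closed immersion, $j_\ast E$ is just $E$ viewed as an $\O_X$-module supported on $Y$, and by the projection-formula / base-change identity $Lj^\ast j_\ast E \cong j^\ast\!\left(j_\ast E \Lotimes_{\O_X} j_\ast \O_Y\right)$ up to the usual identifications; more directly, one computes $Lj^\ast j_\ast E = \O_Y \Lotimes_{\O_X} j_\ast E$ as a complex of $\O_Y$-modules. Resolving the first factor $\O_Y$ by the two-term complex above and tensoring with $j_\ast E$ yields the two-term complex
\begin{equation*}
\left[\, E \otimes_Y \O_X(-Y)|_Y \xrightarrow{s|_Y} E \,\right]
\end{equation*}
placed in degrees $-1$ and $0$. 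The crucial point is that the section $s$ vanishes identically along $Y$, so the induced map $s|_Y$ on the restricted sheaves is the zero map.

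Since the connecting differential $s|_Y$ is zero, the complex splits into its cohomology sheaves: the degree-$0$ term is $E$ and the degree-$(-1)$ term is $E \otimes_Y \O_X(-Y)|_Y$. This gives the distinguished triangle
\begin{equation*}
E \otimes_Y \O_X(-Y)|_Y[1] \to Lj^\ast j_\ast E \to E \to \ast,
\end{equation*}
as claimed, where the rightmost map is the canonical counit $Lj^\ast j_\ast E \to E$ and the leftmost term records the derived self-intersection contribution. The main obstacle, such as it is, lies in carefully justifying that the map $s|_Y$ vanishes and in tracking the identifications under $Lj^\ast$ so that the degree shifts and the twist by $\O_X(-Y)|_Y$ come out correctly; once the vanishing of $s|_Y$ is established the triangle follows formally. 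I would cite \cite{Huy06}*{11.4} for the standard (reduced) version and simply remark that the resolution argument is insensitive to the reducedness of $Y$, which is the only novelty needed for the chain application.
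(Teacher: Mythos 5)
Your proof is correct and is essentially the same as the paper's: the paper offers no argument of its own, simply citing \cite{Huy06}*{11.4} and remarking that the statement survives non-reducedness, and your Koszul-resolution computation (resolve $j_\ast \O_Y$ by $\O_X(-Y) \xrightarrow{s} \O_X$, note that the induced differential is multiplication by $s$ and hence kills any $\O_Y$-module, then read off the two cohomology sheaves and the resulting triangle) is precisely the standard argument behind that citation. Your observation that the vanishing of the differential uses only that $E$ is annihilated by the ideal sheaf $\Im(s) = \O_X(-Y)$, with no reducedness hypothesis on $Y$, is exactly the point the paper wishes to emphasize.
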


The second key result we will use is a description of indecomposable vector bundles over chains of rational curves. This mimics the fact that every vector bundle on $\P^1$ splits as a direct sum of line bundles.

\begin{teo}[\cite{DG01}*{2.7}] \label{teo:chain_tf}
Let $C=C_1 \cup \dots \cup C_r$ be a reduced curve, where each $C_i$ is a smooth rational curve, and such that each $C_i$ intersect only $C_{i\pm 1}$ in exactly one node. Assume that $E$ is a pure dimension 1 sheaf on $S$, supported on all on $C$, and indecomposable. Then $E$ is a line bundle on $S$.
\end{teo}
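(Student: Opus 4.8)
The plan is to read Theorem \ref{teo:chain_tf} as a purely curve-theoretic statement: after reducing to the case where the scheme-theoretic support of $E$ is the reduced chain $C$ (so that $E$ is a torsion-free $\O_C$-module with full support; the non-reducedness is exactly what is excluded separately in Proposition \ref{prop:chain_main} via Lemma \ref{lemma:chain_triangleadjunction}), I want to show that an indecomposable such $E$ is a line bundle. First I would set up the standard gluing description of torsion-free sheaves on a nodal curve. Letting $\nu\colon \tilde C = C_1 \sqcup \dots \sqcup C_r \to C$ be the normalization, a torsion-free $\O_C$-module is equivalent to the data of torsion-free sheaves $F_i$ on each $C_i$ together with, at each node $p_i = C_i \cap C_{i+1}$, a linear identification of a subspace of the fiber $(F_i)_{p_i}$ with a subspace of $(F_{i+1})_{p_i}$ (the Drozd--Greuel matrix problem, or ``bunch of chains''). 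Since each $C_i \cong \P^1$ is smooth, $F_i$ is locally free, and by Grothendieck's theorem $F_i \cong \bigoplus_j \O_{C_i}(a_{ij})$; moreover $E$ is locally free at $p_i$ exactly when the gluing there is an isomorphism of the full fibers.

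The first real step is local freeness: I claim indecomposability forces every node-gluing to be a full isomorphism. Indeed, any fiber direction on the $C_i$-side that is not glued across $p_i$ produces a sub- or quotient-sheaf supported on one side of the node, and because the chain is a tree---removing any node $p_i$ disconnects $C$ into $C_1 \cup \dots \cup C_i$ and $C_{i+1}\cup\dots\cup C_r$---such an unglued direction splits off as a genuine direct summand, contradicting indecomposability. Hence all gluings $\phi_i\colon (F_i)_{p_i} \xrightarrow{\sim} (F_{i+1})_{p_i}$ are isomorphisms, $E$ is a vector bundle on $C$, and (as $C$ is connected) it has some constant rank $n$; it remains to show $n=1$.

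I would prove $n=1$ by induction on $r$, peeling off the end component $C_r$. The base case $r=1$ is exactly Grothendieck's splitting theorem. For the inductive step, write $C' = C_1 \cup \dots \cup C_{r-1}$; by Krull--Schmidt and the inductive hypothesis applied to the restriction $E|_{C'}$, this restriction is a direct sum of line bundles supported on connected sub-chains, while $E|_{C_r} \cong \bigoplus_l \O_{C_r}(e_l)$. The whole bundle $E$ is then recovered as the fiber product of $E|_{C'}$ and $E|_{C_r}$ over the single gluing isomorphism $\phi_{r-1}$ at the node $p_{r-1}$. The automorphisms of $E|_{C'}$ and of $E|_{C_r}$ act on the fibers at $p_{r-1}$ through parabolic subgroups of $\mathrm{GL}_n$---this uses Grothendieck splitting and the description of the $\Hom$'s between the line-bundle summands, whose evaluations at $p_{r-1}$ are governed by the ordering of the degrees---so a Bruhat-type argument lets me reduce $\phi_{r-1}$ to monomial (permutation) form. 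A monomial gluing exhibits $E$ as a direct sum indexed by the matched summands, each matched pair $(\O_{C_r}(e_l), L_k)$ gluing to a single line bundle on a larger sub-chain; indecomposability forces exactly one block, so $n=1$, $E|_{C'}$ is itself a line bundle, and $E \cong \O(d_1, \dots, d_r)$.

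The main obstacle is the double-coset reduction in the inductive step: showing that every class in $G_{E|_{C'}} \backslash \mathrm{GL}_n / G_{E|_{C_r}}$ has a monomial representative, and that such a representative genuinely decomposes $E$. This is the representation-theoretic heart of the statement, and it is delicate precisely because each interior component carries two nodes, so the relevant automorphisms must be simultaneously compatible at both $p_{i-1}$ and $p_i$; this is what makes the chain a genuine (if finite-type) type-$A$ matrix problem rather than a single Bruhat reduction. I would either invoke \cite{DG01} for this classification directly or carry out the reduction by hand, exploiting that the automorphism groups in play are opposite parabolics, so that the Bruhat decomposition supplies the required monomial forms.
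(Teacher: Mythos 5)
The paper offers no proof of this statement at all: Theorem \ref{teo:chain_tf} is quoted as a black box from Drozd--Greuel \cite{DG01}*{2.7} and simply invoked at the end of the proof of Proposition \ref{prop:chain_main}. So your fallback option (``invoke \cite{DG01} for this classification directly'') \emph{is} the paper's entire argument, and everything beyond that in your proposal is extra content. Two preliminary points you got right and that are worth noting: reading the statement as being about sheaves scheme-theoretically supported on the reduced chain is the correct reading (taken literally, with set-theoretic support, the statement is false --- e.g.\ the structure sheaf of a thickening of $C$ is pure, indecomposable, and fully supported on $C$ but is no line bundle on $C$; in the paper the reduction to reduced support is exactly the content of the first half of Proposition \ref{prop:chain_main}); and your inductive scheme dissolves your own stated worry about interior components carrying two nodes, since each inductive step introduces only the single node $p_{r-1}$, the rest being absorbed into Krull--Schmidt on $C'$ (valid here by finite-dimensionality of endomorphism algebras). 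Your outline is essentially a correct reproof of the finite-type (tree) case of the bunches-of-chains classification, which is more self-contained than the paper but also carries the burden of the matrix problem.

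Two steps in your sketch are softer than you present them. First, the local-freeness claim ``an unglued direction splits off as a genuine direct summand'' is an assertion, not an argument: the honest route is to take $A\subset E$ the sections vanishing on one side of the node, producing $0\to A\to E\to B\to 0$ with $\Supp A\cap\Supp B=\{p_i\}$; since $\HOM(B,A)$ is then supported at the node, $\Ext^1_C(B,A)\cong H^0(\EXT^1(B,A))$ is purely local, and over the node ring $k[[x,y]]/(xy)$ the nonsplit extension of one branch module by the other is precisely the glued module $\O$, so unglued directions contribute split local classes. Without something like this the ``tree structure'' invocation is circular. Second, in the inductive step the image of $\mathrm{Aut}(E|_{C'})$ in $\mathrm{GL}_n$ of the fiber at $p_{r-1}$ is indeed a parabolic, but this requires checking that ``evaluation at the end node is nonzero'' defines a \emph{total} preorder on line bundles on a chain --- concretely, lexicographic nonnegativity of the multidegree of $L'\otimes L^{-1}$ read from the component carrying the node inward --- which is not the degree ordering governing the $C_r$ side; so ``opposite parabolics'' is not accurate, though immaterial, since any two parabolics admit monomial double-coset representatives. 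With (i) and (ii) filled in, your by-hand argument goes through; as written, the gaps sit exactly at the two places where \cite{DG01} does real work.
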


\begin{proof}[Proof of Proposition \ref{prop:chain_main}]
We will argue by contradiction. Assume that such $A$ is not supported on $C=C_1 \cup \dots \cup C_r$ scheme-theoretically, namely $\O_S(-C)\cdot A \neq 0$. Let $k\geq 1$ be the maximum value such that $\O_S(-kC) \cdot A \neq 0$. If $i\colon C' \hookrightarrow S$ denotes the $(k-1)$th order thickening, our assumption implies that $A$ fits in an extension
\begin{equation} \label{eq:chain_filtration_A}
0 \to i_\ast F \to A \to i_\ast G \to 0,
\end{equation}
where $F, G \in \Coh(C')$, and $A$ is \emph{not} an extension coming from $C'$. 

Let us denote by $\xi \in \Ext^1_S(i_\ast G, i_\ast F)$ the extension class of \eqref{eq:chain_filtration_A}. The assumption of $A$ not being an extension on $C'$ guarantees that $\xi$ is not in the image of
\begin{equation} \label{eq:chain_ext1adjunction}
\Ext^1_{C'}(G, F) \to \Ext^1_S(i_\ast G, i_\ast F) \cong \Ext^1_{C'}(Li^\ast i_\ast G, F).
\end{equation}
Note that this adjunction map is the map from Lemma \ref{lemma:chain_triangleadjunction}. This way, we consider the triangle $G \otimes_{C'} \O_S(-C')|_{C'}[1] \to Li^\ast i_\ast G \to G \to \ast$ and apply $\Hom_{C'}(-, F)$ to it. This gives us the exact sequence
\[ \Ext^1_{C'}(G, F) \to \Ext^1_{C'}(Li^\ast i_\ast G, F) \to \Hom_{C'}(G \otimes_{C'} \O_S(-C'|_{C'}), F). \]
As $\xi$ is not in the image of \eqref{eq:chain_ext1adjunction}, we get that 
\[ 0 \neq \Hom_{C'}(G \otimes_{C'} \O_S(-C')|_{C'}, F) = \Hom_S(i_\ast G \otimes \O_S(-C'), i_\ast F). \]
From \cite{HL10}*{1.3.3} we get the inequality
\begin{equation} \label{eq:chain_firstinequality}
\lambda_{\beta, \omega, \mathrm{min}}(i_\ast G \otimes \O_S(-C')) \leq \lambda_{\beta, \omega, \mathrm{max}}(i_\ast F).
\end{equation}

On the other hand, we have that $i_\ast F$ is a subobject of $E$, and so $\lambda_{\beta, \omega, \mathrm{max}}(i_\ast F) < \lambda_{\beta, \omega}(E)$ (as $E$ is stable). Similarly, $\lambda_{\beta, \omega}(E) < \lambda_{\beta, \omega, \mathrm{min}}(i_\ast G)$. These two inequalities, together with \eqref{eq:chain_firstinequality}, yield the result
\begin{equation} \label{eq:chain_secondinequality}
\lambda_{\beta, \omega, \mathrm{min}}(i_\ast G \otimes \O_S(-C')) < \lambda_{\beta, \omega, \mathrm{min}}(i_\ast G).
\end{equation}

At last, let $i_\ast G \to Q \to 0$ be the last factor of the Harder--Narasimhan filtration of $Q$. Here $Q$ is pure of dimension one and supported set-theoretically on $C'$. On the other hand, we claim that the Harder--Narasimhan filtration of $i_\ast G \otimes \O_S(-C')$ has last factor $Q \otimes \O_S(-C')$. 

In fact, given a sheaf $B$ supported on $C$ we have that $\lambda_{\beta, \omega}(B \otimes \O_S(-C')) - \lambda_{\beta, \omega}(B) = k/\epsilon$ is constant. Together with \eqref{eq:chain_secondinequality}, we get a contradiction. This proves that $E$ must be the pushforward of a coherent sheaf on $C$ with the reduced structure. 

The second part follows from the fact that $E$ is indecomposable, as it is a stable sheaf. This way $E$ is a line bundle on $C_1 \cup \dots \cup C_r$ thanks to Theorem \ref{teo:chain_tf}.
\end{proof}

\section{Proof of Theorem \ref*{teo:intro_existence}: Support property} \label{sec:supp}

Let us consider the setup of Section \ref{sec:prestab}, where we constructed the pre-stability condition $\overline{\sigma}_{\beta, f^\ast\eta}$. The goal of this section is to prove the support property for $\overline{\sigma}_{\beta, f^\ast\eta}$, which is the remaining piece towards the proof of Theorem \ref{teo:intro_existence}.

Our approach follows the ideas of \citelist{\cite{Tod13}*{\textsection 3.7} \cite{TX22}*{\textsection 6}}. First, we prove a weak support property in Lemma \ref{lemma:weaksupp_main}, which will allow us to construct stability conditions $\overline{\sigma}_{\beta, sf^\ast \eta}$ as $s$ approaches infinity. Second, we give a description of objects that are $\overline{\sigma}_{\beta, sf^\ast \eta}$-stable as $s$ goes to infinity in Lemma \ref{lemma:bridgie_main}. 

The remainder of the section is devoted to providing bounds to each of these objects. As we mentioned in the introduction, we will prove the support property only for $\beta \in \NS(S)_\Q$ satisfying certain conditions (see Condition \ref{cond:bounds_smallkij}). We will discuss various tools for bounding the Chern numbers of the stable objects in Subsection \ref{subsec:bounds}. These bounds will be used to conclude in Subsection \ref{subsec:wrapping}

\subsection{Weak support property} \label{subsec:weaksupp}

Let us start by recalling the following version of the Bogomolov--Gieseker inequality for semistable torsion-free sheaves, with respect to a big and nef class.

\begin{prop}[\cite{GKP16}*{5.1}]
Let $F$ be a torsion-free $f^\ast\eta$-semistable sheaf on $S$. Then we have $\ch_1^\beta(F)^2 \geq 2\ch_0(F) \ch_2^\beta(F)$.
\end{prop}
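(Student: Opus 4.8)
The plan is to reduce the statement to the classical Bogomolov inequality for ample polarizations and then pass to the big and nef limit $\lambda := f^\ast\eta$, noting that $\lambda^2 = \eta^2 > 0$. The first step is cosmetic but useful: the discriminant is independent of the twist, since expanding the definitions gives
\[ \ch_1^\beta(F)^2 - 2\ch_0(F)\ch_2^\beta(F) = \ch_1(F)^2 - 2\ch_0(F)\ch_2(F) =: \Delta(F). \]
Hence I may assume $\beta = 0$, and the claim becomes the Bogomolov inequality $\Delta(F) \geq 0$ for a torsion-free sheaf $F$ that is semistable with respect to the big and nef class $\lambda$.

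Next I would choose ample rational classes $\omega_i \in \Amp(S)_\Q$ with $\omega_i \to \lambda$, and for each $i$ take the Harder--Narasimhan filtration of $F$ with respect to $\omega_i$-slope, with semistable torsion-free factors $G_1, \dots, G_n$ (all depending on $i$). The classical Bogomolov inequality for the ample class $\omega_i$ (\cite{HL10}) gives $\Delta(G_j) \geq 0$ for each $j$. Writing $r = \ch_0(F)$, $r_j = \ch_0(G_j) > 0$ and $\delta_j = \ch_1(G_j)/r_j$, an elementary expansion yields the convexity identity
\[ \frac{\Delta(F)}{r} = \sum_{j} \frac{\Delta(G_j)}{r_j} - \sum_{j<k} \frac{r_j r_k}{r}(\delta_j - \delta_k)^2, \]
so that $\Delta(F)/r \geq -\sum_{j<k}\frac{r_j r_k}{r}(\delta_j - \delta_k)^2$. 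The whole difficulty is now concentrated in controlling the cross terms $(\delta_j - \delta_k)^2$.

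Here I would invoke the Hodge Index Theorem: since $\omega_i^2 > 0$, every divisor class $D$ satisfies $D^2 \leq (D.\omega_i)^2/\omega_i^2$. Applying this to $D = \delta_j - \delta_k$ and using $(\delta_j - \delta_k).\omega_i = \mu_{\omega_i}(G_j) - \mu_{\omega_i}(G_k)$, each cross term is dominated by the squared slope width $(\mu_{\omega_i, \max}(F) - \mu_{\omega_i, \min}(F))^2/\omega_i^2$ of the Harder--Narasimhan filtration. Thus it suffices to prove that this width tends to $0$ as $\omega_i \to \lambda$; granting that, the displayed lower bound forces $\Delta(F) \geq 0$.

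The main obstacle is exactly this last convergence, as the polarization degenerates from the interior of the ample cone to its big and nef boundary. I expect to resolve it with Grothendieck's boundedness lemma: the saturated subsheaves $F' \subseteq F$ whose $\omega$-slope is at least $\mu_\omega(F)$ for $\omega$ in a fixed neighborhood of $\lambda$ form a bounded family, so only finitely many numerical classes $(\ch_0, \ch_1)$ occur; consequently $\omega \mapsto \mu_{\omega,\max}(F)$ is a maximum of finitely many linear functionals and extends continuously to $\omega = \lambda$, where the $\lambda$-semistability of $F$ forces $\mu_{\lambda,\max}(F) = \mu_\lambda(F) = \mu_{\lambda,\min}(F)$, i.e.\ the width vanishes in the limit. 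An alternative that sidesteps the limit is the Mehta--Ramanathan/Flenner restriction theorem: restrict $F$ to a general curve $C \in |m f^\ast\eta|$, which for general $C$ lies in the smooth locus and is pulled back from $T$, where semistability is preserved and the inequality can be read off directly. Either way, the essential point---and the reason \cite{GKP16} is a genuine input rather than a formal corollary of the ample case---is the uniform control of destabilizing subsheaves as one approaches the nef boundary.
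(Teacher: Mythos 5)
The first thing to note is that the paper does not prove this proposition at all: it is quoted directly from \cite{GKP16}*{5.1}, so there is no internal argument to compare against, and the real question is whether your sketch constitutes an independent proof. Your reductions are correct as far as they go: the discriminant $\ch_1(F)^2-2\ch_0(F)\ch_2(F)$ is indeed unchanged by the $\beta$-twist, the convexity identity over the Harder--Narasimhan factors is the standard one, and the Hodge index bound on the cross terms is applied in the right direction. Granting that the slope width $W_i=\mu_{\omega_i,\max}(F)-\mu_{\omega_i,\min}(F)$ tends to $0$, the inequality follows exactly as you say, since $\omega_i^2\to\eta^2>0$.

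The gap is the step you yourself flag, and it is genuine: the uniform boundedness does not follow from Grothendieck's lemma. That lemma (\cite{HL10}*{1.7.9}) bounds the saturated subsheaves of slope bounded below \emph{for a single fixed polarization}; it says nothing about the family of subsheaves that destabilize for \emph{some} $\omega$ in a neighborhood of $\lambda$, and this uniformity in the polarization, precisely as $\omega$ degenerates to the nef boundary, is the entire content of the cited result---it is what \cite{GKP16} actually establish (and what makes their theorem more than a formal corollary of the ample case, as you correctly observe). As written, your argument therefore assumes at its key step a statement essentially equivalent to what is being proved. The step can be repaired on a surface without circularity, but it takes an extra bootstrapping argument of wall-finiteness type: if $F_i'\subset F$ is the maximal destabilizer for $\omega_i$, with HN factors $G_j^{(i)}$ and normalized classes $\delta_j^{(i)}$, then all $G_j^{(i)}$ are $\omega_i$-semistable, so the classical Bogomolov inequality together with your convexity identity gives the \emph{lower} bound $r_1r_n(\delta_1^{(i)}-\delta_n^{(i)})^2\geq -\Delta(F)-r^2W_i^2/\omega_i^2$, while Hodge index gives the matching \emph{upper} bound $(\delta_1^{(i)}-\delta_n^{(i)})^2\leq W_i^2/\omega_i^2$. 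Since the intersection form is uniformly negative definite on $\omega_i^\perp$ for $i\gg 0$, these two-sided bounds force $\norm{\delta_1^{(i)}-\delta_n^{(i)}}\leq C(1+W_i)$; combining this with $\lambda.(\delta_1^{(i)}-\delta_n^{(i)})\leq 0$ (which does follow from $\lambda$-semistability, as both classes come from an actual subsheaf and an actual torsion-free quotient of $F$) and $W_i=\omega_i.(\delta_1^{(i)}-\delta_n^{(i)})\leq \norm{\omega_i-\lambda}\cdot C(1+W_i)$, one gets first that $W_i$ is bounded and then that $W_i\to 0$. With that insertion your strategy closes up. Your restriction-theorem alternative suffers from the same defect: Mehta--Ramanathan and Flenner are theorems about ample polarizations, and extending them to $f^\ast\eta$ (even using that $|mf^\ast\eta|$ is pulled back from an ample system on $T$) again requires an argument rather than a citation.
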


With this in mind, we will prove the following inequality.

\begin{lemma}[cf. \cite{Tod13}*{3.18}] \label{lemma:weaksupp_main}
Let $E \in \overline{\Pslicing}_{\beta, f^\ast \eta}((0, 1])$ be a $\sigma_{\beta, f^\ast\eta}$-semistable object. Then the inequality $(\ch_1^\beta(E).f^\ast \eta)^2 \geq 2\eta^2 \ch_0(E)\ch_2^\beta(E)$ holds.
\end{lemma}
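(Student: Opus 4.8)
The plan is to prove the inequality
\[ (\ch_1^\beta(E).f^\ast\eta)^2 \geq 2\eta^2 \ch_0(E)\ch_2^\beta(E) \]
by reducing to the structural description of the heart $\overline{\Pslicing}_{\beta, f^\ast\eta}((0,1])$ obtained in Section \ref{sec:constrheart}, and then running the Bogomolov--Gieseker-type argument of Toda. The strategy is as follows.

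\emph{First}, I would reduce to the case where $\ch_0(E) \neq 0$, since when $\ch_0(E)=0$ the left-hand side is a square (hence $\geq 0$) while the right-hand side vanishes, so the inequality is automatic. I would also reduce to the case where $E$ lies in $\Ttors$ or $\Ftors[1]$ by passing to cohomology objects with respect to the torsion pair of Corollary \ref{cor:idheart_torspair}; replacing $E$ by $E[1]$ if needed changes neither side of the inequality (both $\ch_0$ and $\ch_2^\beta$ pick up a sign, the product is unchanged, and $\ch_1^\beta$ picks up a sign so its square is unchanged). So I may assume $E \in \Ttors \subset \Coh(S)$ is a genuine coherent sheaf of positive rank.

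\emph{Second}, I would use the slope-stability decomposition supplied by Corollary \ref{cor:hslope_HNfactors} and Remark \ref{remark:hslope_useHNfactors}. The point is that an object of $\Ttors$ is, up to torsion supported on $\Exc(f)$, built out of $f^\ast\eta$-slope semistable torsion-free sheaves, to each of which the big-and-nef Bogomolov--Gieseker inequality $\ch_1^\beta(F)^2 \geq 2\ch_0(F)\ch_2^\beta(F)$ of \cite{GKP16}*{5.1} applies. Combining this with the Hodge Index Theorem in the form $(\ch_1^\beta(F).f^\ast\eta)^2 \geq (f^\ast\eta)^2 \ch_1^\beta(F)^2 = \eta^2 \ch_1^\beta(F)^2$ (valid because $f^\ast\eta$ is big and nef, so lies on the positive cone, and the orthogonal complement is negative semidefinite) yields the desired inequality for each slope-semistable factor $F$. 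The torsion part supported on $\Exc(f)$ contributes $\ch_0 = 0$ and $\ch_1^\beta.f^\ast\eta = 0$, so it does not affect the argument and can be absorbed; one must check that the quantities combine correctly across the Harder--Narasimhan filtration.

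\emph{Third}, the genuine work—and the step I expect to be the main obstacle—is passing from the inequality for the individual slope-semistable pieces to the inequality for $E$ itself. This is not simply additive: the function $(\beta, \omega)\mapsto (\ch_1^\beta.f^\ast\eta)^2 - 2\eta^2\ch_0\ch_2^\beta$ is a quadratic form, not a linear one, so one cannot add the per-factor inequalities directly. The standard device (following \cite{Tod13}*{3.18}) is to observe that, along the Harder--Narasimhan filtration with respect to $f^\ast\eta$-slope, the imaginary part $\Im Z_{\beta, f^\ast\eta} = f^\ast\eta.\ch_1^\beta$ is additive and nonnegative on all factors, and then to carry out a two-variable convexity/rank-slope estimate showing that the quadratic inequality is preserved under taking extensions of objects each satisfying it, provided their slopes are suitably ordered. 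Concretely, I would write $E$ as an iterated extension of slope-semistable sheaves $F_1, \dots, F_m$ (plus exceptionally-supported torsion), set $r_j = \ch_0(F_j) \geq 0$, $u_j = \ch_1^\beta(F_j).f^\ast\eta \geq 0$, and reduce the claim to an elementary inequality among these data using $u_j^2 \geq 2\eta^2 r_j \ch_2^\beta(F_j)$; the ordering $u_1/r_1 > \dots > u_m/r_m$ from the HN property is what makes the cross-terms come out with the right sign. I would close by verifying that the exceptionally-supported torsion factors, having $r_j=0=u_j$ but possibly nonzero $\ch_2^\beta$, enter only through $\ch_2^\beta \leq 0$-type control (which follows because such sheaves are pushforwards of sheaves on $\Exc(f)$ and the relevant term is negative), so that they only help the inequality.
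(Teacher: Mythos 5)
Your ingredients are the right ones, and they do match what the paper actually uses: the proof here is Toda's \cite{Tod13}*{3.18} run almost verbatim, with the slope-HN description of the torsion pair (Corollary \ref{cor:hslope_HNfactors}, Remark \ref{remark:hslope_useHNfactors}) substituted for Toda's Lemma 3.9, together with \cite{GKP16}*{5.1} and the Hodge index theorem exactly as you say. But your write-up has two genuine gaps. The first is the opening ``reduction'' to $E \in \Ttors$ or $E \in \Ftors[1]$: this is not a reduction at all. Semistable objects with $H^{-1}(E) \neq 0 \neq H^0(E)$ genuinely occur (see forms (F3)--(F5) in Lemma \ref{lemma:bridgie_main}), the cohomology objects of a semistable object need not themselves be semistable, and since $v(E) = v(H^0(E)) - v(H^{-1}(E))$ is a \emph{difference} while $(\ch_1^\beta.f^\ast\eta)^2 - 2\eta^2\ch_0\ch_2^\beta$ is quadratic, knowing the inequality for both cohomology objects implies nothing about $E$. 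The mixed case is a real case of the proof; it is handled by playing the phase inequalities from $\sigma_{\beta,f^\ast\eta}$-semistability against the subobject $H^{-1}(E)[1]$ and the quotient $H^0(E)$ in the heart, not by passing to them.

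The second gap is the combination step, and here the mechanism you describe is false: per-factor inequalities plus the slope ordering of the HN factors do \emph{not} imply the inequality for the total object. Numerically, take a torsion class with $\ch_0 = 0$, $\ch_1.f^\ast\eta = 0$, $\ch_2^\beta = n > 0$ and a rank-one class with $\ch_1^\beta.f^\ast\eta = \epsilon > 0$ small and $\ch_2^\beta \geq 0$ satisfying Bogomolov--Gieseker; both satisfy the inequality, the slopes are ordered ($+\infty$, then finite), yet the sum has $(\ch_1^\beta.f^\ast\eta)^2 = \epsilon^2 < 2\eta^2 n$. Geometrically this is realized by $E = \O_S \oplus \O_x^{\oplus n}$ with $\beta$ a small negative multiple of $f^\ast\eta$, and what excludes it is not the numerics of the filtration but the $\sigma_{\beta,f^\ast\eta}$-semistability of $E$: the phase-one subobject $\O_x^{\oplus n} \in \Ttors$ destabilizes $E$. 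Relatedly, your claim that exceptionally supported torsion enters only through ``$\ch_2^\beta \leq 0$-type control, because such sheaves are pushforwards from $\Exc(f)$'' is wrong as stated: pushforwards from $\Exc(f)$ lying in $\Ttors$ have $\ch_2^\beta > 0$, and it is semistability (as in Lemma \ref{lemma:bridgie_main}(F1)) that forces the torsion subsheaf of a semistable sheaf into $\Ftors$. This is precisely where Toda's argument differs from your sketch: the semistability of $E$ must be fed into the estimate through phase inequalities against the specific sub- and quotient objects in the heart furnished by Remark \ref{remark:hslope_useHNfactors}, and not only through the existence and ordering of a slope-HN filtration.
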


\begin{proof}
The proof of \cite{Tod13}*{3.18} works almost verbatim, replacing the use of \cite{Tod13}*{3.9} with the description of Remark \ref{remark:hslope_useHNfactors}.
\end{proof}

\begin{cor}[cf. \cite{Tod13}*{3.19}]
Let $\beta, \eta$ be as before. There exists a (unique) continuous map\footnote{We are abusing notation here, as our definition of $\Stab(S)$ requires the (full) support property. However, the weak support property allows us to show that $\sigma_{\beta, f^\ast \eta}$ is a stability condition with respect to a restricted lattice. This way, the deformations $\sigma_s$ are stability conditions with respect to this lattice. See for instance the discussion on \cite{Tod13}*{\textsection 3.6} for more details.} $\R_{>0} \to \Stab(S)$, $s \mapsto \sigma_s$ so that $\sigma_1 = \overline{\sigma}_{\beta, f^\ast \eta}$, and such that the central charge of $\sigma_s$ is $Z_{\beta, sf^\ast \eta}$. The stability conditions constructed in this fashion all have heart $\overline{\Pslicing}_{\beta, f^\ast\eta}((0, 1])$. In particular, for any fixed numerical class, the $\sigma_s$ admit a locally finite wall and chamber decomposition.
\end{cor}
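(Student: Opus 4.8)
The plan is to produce the family $\sigma_s$ by analytically continuing the single stability condition $\sigma_1 = \overline{\sigma}_{\beta, f^\ast\eta}$ via Bridgeland's deformation theorem, working over the restricted lattice through which all the relevant central charges factor. Concretely, I would introduce the lattice $\Lambda'$ with coordinates $(r, c, d) = (\ch_0, f^\ast\eta.\ch_1^\beta, \ch_2^\beta)$ and record that
\[ Z_{\beta, sf^\ast\eta} = -d + \tfrac{s^2\eta^2}{2}\,r + is\,c \]
depends only on these three numbers. The key structural observation is that the heart $\overline{\Pslicing}_{\beta, f^\ast\eta}((0,1])$ is independent of $s$: by Corollary \ref{cor:idheart_torspair} it is the tilt of $\Coh(S)$ at the torsion pair $(\Ttors, \Ftors)$, and this torsion pair is insensitive to the rescaling $\eta \mapsto s\eta$, since the defining inequality $\mu_{f^\ast\eta}(E) \leq \beta.f^\ast\eta$ is homogeneous in $\eta$.

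Next I would verify that $(Z_{\beta, sf^\ast\eta}, \overline{\Pslicing}_{\beta, f^\ast\eta}((0,1]))$ is a pre-stability condition for every $s>0$. The imaginary part $\Im Z_{\beta, sf^\ast\eta} = s\,f^\ast\eta.\ch_1^\beta$ has the same sign as at $s=1$, so the heart maps to the closed upper half-plane; the substantive point is to rule out nonzero heart objects $E$ with $\Im Z_{\beta, sf^\ast\eta}(E)=0$ and $\Re Z_{\beta, sf^\ast\eta}(E)\geq 0$. Writing $E$ as an extension of $Q \in \Ttors$ and $F[1]$ with $F \in \Ftors$, the vanishing of $\Im$ forces both $\Im Z(Q)=0$ and $\Im Z(F[1])=0$. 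By Lemma \ref{lemma:hslope_torsIm0} the object $Q$ is then a torsion sheaf, necessarily supported on $\Exc(f)$ by Lemma \ref{lemma:idheart_pointsonT}, so $\ch_0(Q)=0$ and $\Re Z_{\beta, sf^\ast\eta}(Q) = -\ch_2^\beta(Q)$ is $s$-independent, hence negative because $\overline{\sigma}_{\beta,f^\ast\eta}$ is already known to be a pre-stability condition. Meanwhile $F$ is $f^\ast\eta$-slope semistable; splitting off its torsion subsheaf (supported on $\Exc(f)$, again with $s$-independent real part) reduces to the torsion-free case, where combining the Bogomolov--Gieseker inequality of \cite{GKP16} with the Hodge index theorem (using $f^\ast\eta.\ch_1^\beta(F)=0$ and $(f^\ast\eta)^2>0$) gives $\ch_0(F)\ch_2^\beta(F) \leq \tfrac12\ch_1^\beta(F)^2 \leq 0$, so that $\Re Z_{\beta, sf^\ast\eta}(F[1]) = \ch_2^\beta(F) - \tfrac{s^2\eta^2}{2}\ch_0(F) < 0$ for $F \neq 0$. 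The Harder--Narasimhan property then follows from \cite{MS17}*{4.10}, the lattice $\Lambda'$ being discrete since $\beta, \eta$ are rational.

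With this in hand I would deduce the support property and assemble the family. Set $Q(E) = c(E)^2 - 2\eta^2 r(E)d(E)$, the quadratic form of Lemma \ref{lemma:weaksupp_main}; a direct computation shows it is negative definite on $\ker Z_{\beta, sf^\ast\eta} \subset \Lambda'_\R$ for every $s>0$ (on that kernel $c=0$ and $d = \tfrac{s^2\eta^2}{2}r$, whence $Q = -s^2\eta^4 r^2$). Lemma \ref{lemma:weaksupp_main} gives $Q \geq 0$ on $\sigma_1$-semistable objects, so $\sigma_1$ is a stability condition with respect to $\Lambda'$. I would then invoke the support-property refinement of Bridgeland's deformation theorem \cite{Bri07}: the forgetful map to $\Hom(\Lambda', \C)$ is a local homeomorphism onto the open locus of central charges for which $Q$ remains admissible, so the ray $s \mapsto Z_{\beta, sf^\ast\eta}$ lifts to a unique continuous path $\sigma_s$ with $\sigma_1 = \overline{\sigma}_{\beta, f^\ast\eta}$. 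Since the previous paragraph exhibits $(Z_{\beta, sf^\ast\eta}, \overline{\Pslicing}_{\beta, f^\ast\eta}((0,1]))$ as a genuine stability condition on $\Lambda'$ with the prescribed central charge, uniqueness of the lift forces the two to coincide; in particular every $\sigma_s$ has heart $\overline{\Pslicing}_{\beta, f^\ast\eta}((0,1])$, and local finiteness of walls for a fixed numerical class is the standard consequence of the support property.

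The main obstacle I anticipate is the simultaneous control, for all $s>0$, of heart objects lying on the imaginary axis of $Z_{\beta, sf^\ast\eta}$, whose real part could a priori change sign as $s$ varies. The decomposition into a torsion part (with $s$-independent real part) and a torsion-free part (where the $-\tfrac{s^2\eta^2}{2}\ch_0$ term only makes $\Re Z$ more negative) is what resolves this, but it leans essentially on the Bogomolov--Gieseker inequality of \cite{GKP16} together with the Hodge index theorem. A secondary point requiring care is verifying that the admissibility of $Q$ on $\ker Z_{\beta,sf^\ast\eta}$ persists along the whole ray, so that the deformation can be continued for all $s$ without ever crossing a locus where $Q$ degenerates.
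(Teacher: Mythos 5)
Your proposal is correct and is essentially the paper's own (implicit) argument: the paper establishes this corollary only by reference to \cite{Tod13}*{3.19} and the footnote on the restricted lattice, which is precisely your scheme of passing to the lattice $\Lambda'$ recording $(\ch_0(E), f^\ast\eta.\ch_1^\beta(E), \ch_2^\beta(E))$, taking the quadratic form of the weak support property of Lemma \ref{lemma:weaksupp_main}, checking that it stays negative definite on $\ker Z_{\beta, sf^\ast\eta}$ for every $s>0$ (your computation $Q = -s^2\eta^4 r^2$ is right), and continuing $\overline{\sigma}_{\beta, f^\ast\eta}$ along the ray via Bridgeland's deformation theorem, with your direct check that $(Z_{\beta, sf^\ast\eta}, \overline{\Pslicing}_{\beta, f^\ast\eta}((0,1]))$ is pre-stable for each $s$ (torsion part has $s$-independent real part; torsion-free part handled by \cite{GKP16} plus Hodge index) correctly pinning down the heart. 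The one step to tighten is the final appeal to ``uniqueness of the lift,'' which tacitly assumes that $s \mapsto (Z_{\beta, sf^\ast\eta}, \overline{\Pslicing}_{\beta, f^\ast\eta}((0,1]))$ is itself a \emph{continuous} path in $\Stab(S)$; this is unverified as written, and it is cleaner to use that a stability condition is determined by its heart together with the stability function (\cite{Bri07}*{5.3}) and run an open--closed argument in $s$ using the local uniqueness in the deformation theorem, which is the standard way (as in \cite{Tod13}) to conclude that the deformed family has the constant heart.
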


\subsection{Bridgeland--to--Gieseker comparison}

The next step towards proving the support property is to compare objects that are $\sigma_s$-semistable for $s \gg 0$ with $f^\ast\eta$-semistability.

\begin{lemma}[cf. \citelist{\cite{Tod13}*{Lemma 3.21} \cite{TX22}*{Lemma 6.4}}] \label{lemma:bridgie_main}
Let $E \in \overline{\Pslicing}_{\beta, f^\ast \eta}((0, 1])$ be given. Assume that $E$ is $\sigma_s$-semistable for all $s$ sufficiently large. Then $E$ has one of the following forms.
\begin{enumerate}[label=(F\arabic*)]
\item $H^{-1}(E)=0$, and $E=H^0(E)$ is a coherent sheaf, $f^\ast\eta$-slope semistable, whose torsion subsheaf lies in\footnote{Here $\Ftors$ and $\Ttors$ are the corresponding categories for $\overline{\Pslicing}_{\beta, f^\ast\eta}$ described in Corollary \ref{cor:idheart_torspair}.} $\Ftors$. 
\item $H^{-1}(E)=0$, and $E=H^0(E)$ is torsion, with phase $\neq 1$.
\item $H^{-1}(E)$ and $H^0(E)$ are torsion, supported on $\Exc(f)$ union finitely many points. 
\item $H^0(E)$ is a torsion sheaf, supported on $\Exc(f)$ union finitely many points, and $H^{-1}(E)$ is a coherent sheaf, $f^\ast\eta$-slope semistable of slope $\beta.f^\ast\eta$, whose torsion subsheaf lies in $\Ftors$.
\item $H^0(E)$ is a torsion sheaf, supported on $\Exc(f)$ union finitely many points, and $H^{-1}(E)$ is a torsion-free coherent sheaf, $f^\ast\eta$-slope semistable (of slope smaller than $\beta.f^\ast\eta$). 
\end{enumerate}
The objects in (F3) and (F4) have phase 1 for all $s$, while the other three have phase smaller than 1 for all $s$. 
\end{lemma}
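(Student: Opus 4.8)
The plan is to read off the five cases from the asymptotic behaviour of phases as $s\to\infty$, using that all the $\sigma_s$ share the heart $\overline{\Pslicing}_{\beta,f^\ast\eta}((0,1])$, which by Corollary~\ref{cor:idheart_torspair} is the tilt of $\Coh(S)$ at the torsion pair $(\Ttors,\Ftors)$. Every $E$ in the heart thus sits in a short exact sequence
\[
0\to H^{-1}(E)[1]\to E\to H^0(E)\to 0
\]
with $H^{-1}(E)\in\Ftors$ and $H^0(E)\in\Ttors$. Writing $V=\eta^2$, the central charge is
\[
Z_{\beta,sf^\ast\eta}(E)=-\ch_2^\beta(E)+\tfrac{s^2V}{2}\ch_0(E)+is\,\bigl(f^\ast\eta.\ch_1^\beta(E)\bigr),
\]
and the first thing I would record is the limiting phase of each building block: a sheaf in $\Ttors$ of positive rank has phase $\to 0$; a torsion sheaf in $\Ttors$ not supported on $\Exc(f)$ has phase $\to 1/2$; and any object supported on $\Exc(f)$ (union points), as well as any $F[1]$ with $F\in\Ftors$, has phase tending to or equal to $1$. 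Since $\Im Z_{\beta,sf^\ast\eta}(E)$ is a fixed multiple of $s$, a heart object has phase exactly $1$ for some (equivalently every) $s$ precisely when $\Im Z_{\beta,sf^\ast\eta}(E)=0$; this will give the phase assertions ``for all $s$'' at the end for free.

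The case analysis then proceeds as follows. If $H^{-1}(E)\neq 0$ and $H^0(E)$ has positive rank, the subobject $H^{-1}(E)[1]$ has phase $\to 1$ while the quotient $H^0(E)$ has phase $\to 0$, so the subobject strictly destabilizes $E$ for $s\gg 0$; hence either $H^{-1}(E)=0$ or $H^0(E)$ is torsion. In the first branch $E=H^0(E)$ is a sheaf: if it has positive rank I aim for (F1); if it is torsion I split according to whether $f^\ast\eta.\ch_1(E)>0$ (phase $\to 1/2$, giving (F2)) or $E$ is supported on $\Exc(f)$ union points (phase $1$, giving (F3) with $H^{-1}(E)=0$). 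In the second branch I would first show $H^0(E)$ is supported on $\Exc(f)$ union points---a component off $\Exc(f)$ forces the quotient to have phase $\to 1/2$ against a subobject of phase $\to 1$, again destabilizing---and then split according to whether $H^{-1}(E)$ is torsion (forced onto $\Exc(f)$ by Lemma~\ref{lemma:idheart_pointsonT}, giving (F3)) or of positive rank, where slope equal to $\beta.f^\ast\eta$ gives (F4) and slope strictly smaller gives (F5).

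The technical heart of the argument, and the step I expect to be the main obstacle, is the Bridgeland--to--slope comparison: that $\sigma_s$-semistability for $s\gg 0$ forces the relevant sheaf ($H^0(E)$ in (F1), $H^{-1}(E)$ in (F5)) to be $f^\ast\eta$-slope semistable. The difficulty is that the $f^\ast\eta$-slope Harder--Narasimhan factors need not lie in $\Ttors$ or $\Ftors$, so a priori they are neither sub- nor quotient objects in the heart. I would proceed as in \cite{Tod13}*{Lemma 3.21} and \cite{TX22}*{Lemma 6.4}, replacing their slope-comparison input with Corollary~\ref{cor:hslope_HNfactors} and Remark~\ref{remark:hslope_useHNfactors}, which modify the slope-HN factors by torsion supported on $\Exc(f)$ so that a maximal-slope subsheaf becomes a genuine heart subobject; its limiting phase is then strictly larger than that of $E$ (higher $f^\ast\eta$-slope gives larger limiting phase among the $\Ttors$-pieces and phase closer to $1$ among the shifted $\Ftors$-pieces), which destabilizes $E$ unless the sheaf is already slope semistable. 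In (F4) slope semistability is instead automatic, since every $\Ftors$ object has $\mu_{\max}\le\beta.f^\ast\eta$, forcing all HN slopes to equal $\beta.f^\ast\eta$. The same phase comparison locates the torsion subsheaves: the $\Ttors$-part of the torsion subsheaf of $H^0(E)$ has limiting phase $\ge 1/2$ and so must vanish in (F1), where $E$ has phase $\to 0$, giving the torsion subsheaf in $\Ftors$; and an $\Exc(f)$-torsion $\tau\subseteq H^{-1}(E)$ gives $\tau[1]\subseteq E$ of phase $1$, destabilizing $E$ in (F5), where $E$ has phase $<1$, so $H^{-1}(E)$ is torsion-free there (the torsion in (F4) lies in $\Ftors$ automatically). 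Finally, the phase-for-all-$s$ claims follow since $\Im Z_{\beta,sf^\ast\eta}(E)$ vanishes identically exactly in cases (F3) and (F4), and is a positive multiple of $s$ otherwise.
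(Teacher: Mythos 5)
Your proposal is correct and follows essentially the same route as the paper's proof: the same case analysis on the ranks/torsion of $H^0(E)$ and $H^{-1}(E)$, driven by asymptotic phase comparisons as $s\to\infty$ (equivalent to the paper's slope $\psi_s = -\Re Z_{\beta,sf^\ast\eta}/\Im Z_{\beta,sf^\ast\eta}$), with Corollary \ref{cor:hslope_HNfactors} and Remark \ref{remark:hslope_useHNfactors} supplying the heart subobjects that stand in for slope-HN factors, and Lemma \ref{lemma:idheart_pointsonT} locating torsion on $\Exc(f)$. Your one deviation—deducing slope semistability in (F4) directly from $\mu_{\max}\leq \beta.f^\ast\eta$ for objects of $\Ftors$ rather than from a destabilization argument—is harmless and in fact slightly cleaner, since in that case $E$ has phase $1$ for all $s$ and $\sigma_s$-semistability alone gives no constraint.
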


The proof will take the remainder of this subsection. The key idea is that to check $\sigma_s$-semistability of an object $A \in \overline{\Pslicing}_{\beta, f^\ast \eta}((0, 1])$, we can compute the slope $\psi_s(A)=-\Re Z_{\beta, sf^\ast \eta}(A)/\Im Z_{\beta, sf^\ast \eta}(A)$. We have
\begin{equation} \label{eq:presupp_auxslope}
\psi_s(A) = -\frac{s\eta^2}{2}(\mu_{f^\ast \eta}(A)-\beta.f^\ast \eta)^{-1} + O(1/s),
\end{equation}
provided that $\ch_0(A)\neq 0$. This will relate $\sigma_s$ and Gieseker semistability. 

Through this subsection, we let $E$ be a $\sigma_s$-semistable object, for $s \gg 0$. We will divide in various cases depending on the rank of $H^0(E)$ and $H^{-1}(E)$.

\subsubsection{Case I} \label{subsubsec:caseI}

To start, let us assume that $E$ satisfies that both $\ch_0(H^0(E))$ and $\ch_0(H^{-1}(E))$ are greater than zero. From the exact sequence $0 \to H^{-1}(E)[1] \to E \to H^0(E) \to 0$ in $\overline{\Pslicing}_{\beta, f^\ast\eta}((0, 1])$, we get that $\psi_s(H^{-1}(E)[1]) \leq \psi_s(H^0(E))$ for all $s \gg 0$. Using \eqref{eq:presupp_auxslope}, we get that
\[ -(\mu_{f^\ast\eta}(H^{-1}(E)[1])-\beta.f^\ast\eta)^{-1} \leq -(\mu_{f^\ast\eta}(H^0(E))-\beta.f^\ast\eta)^{-1}. \]
Now, note that $\Im Z_{\beta, f^\ast \eta}(H^0(E))\geq 0$, hence the last term is either $+\infty$ (if the imaginary part is zero) or negative. Similarly, we get that the first term is either $+\infty$ or positive. This is a contradiction unless $\Im Z_{\beta, f^\ast\eta}(H^0(E))=0$. But in this case $\ch_0(H^0(E))=0$ by Lemma \ref{lemma:hslope_torsIm0}. This contradicts our original assumption that both $\ch_0(H^0(E))$ and $\ch_0(H^{-1}(E))$ are non-zero. It follows that at least one of them must be a torsion sheaf.

\subsubsection{Case II}

Assume now that $\ch_0(H^0(E))>0$, Note that the previous case ensures that $\ch_0(H^{-1}(E))=0$; let us prove that $H^{-1}(E) = 0$ by contradiction. We already know that $H^{-1}(E)$ must be torsion, and Lemma \ref{lemma:idheart_pointsonT} ensures that $H^{-1}(E)$ is supported on $\Exc(f)$. These two facts imply that $\psi_s(H^{-1}(E)[1])=+\infty$. But then the short exact sequence $0 \to H^{-1}(E)[1] \to E \to H^0(E) \to 0$
in $\overline{\Pslicing}_{\beta, f^\ast\eta}((0, 1])$, together with the $\sigma_s$-semistability of $E$, gives us that $\psi_s(H^0(E))=+\infty$, i.e. $\Im Z_{\beta, f^\ast\eta}(H^0(E))=0$. Once again, this is a contradiction by Lemma \ref{lemma:hslope_torsIm0}. Therefore, we must have $H^{-1}(E)=0$. 

Now, note that $E=H^0(E)$ is $f^\ast\eta$-slope semistable. Otherwise, we can take the short exact sequence of Remark \ref{remark:hslope_useHNfactors}:
\[ 0 \to \tilde{E}_{r-1} \to E \to E/\tilde{E}_{r-1} \to 0, \]
which is a short exact sequence in $\overline{\Pslicing}_{\beta, f^\ast\eta}((0, 1])$. Here $E/\tilde{E}_{r-1}$ has the same $f^\ast\eta$-slope as $E/E_{r-1}$. Using that $E$ is $\sigma_s$-semistable for all $s \gg 0$ together with \eqref{eq:presupp_auxslope} shows that $\mu_{f^\ast\eta}(E)\leq \mu_{f\ast\eta}(E/E_{r-1})$, hence $E$ is $f^\ast\eta$-semistable.

So far, we have shown that $H^0(E)$ is $f^\ast\eta$-slope semistable. We now claim that $H^0(E)_{tors} \in \Ftors$. To prove this, consider the short exact sequence of sheaves
\[ 0 \to H^0(E)_{tors} \to E \to H^0(E)_{tf} \to 0, \]
and look at the long exact sequence of cohomology objects with respect to the heart $\overline{\Pslicing}_{\beta, f^\ast\eta}((0, 1])$. Here $E$ and $H^0(E)_{tf}$ lie in the heart, as $E \in \Ttors$. We get
\[ 0 \to \HC^0(H^0(E)_{tors}) \to E \to H^0(E)_{tf} \to \HC^1(H^0(E)_{tors}) \to 0. \]
Here $\HC^0(H^0(E)_{tors})$ is a torsion sheaf, which would destabilize $E$ if non-zero (as $E$ does not have phase 1). It follows that $E$ satisfies (F1).

\subsubsection{Case III}

Assume now that $\ch_0(H^{-1}(E))>0$, so that $\ch_0(H^0(E))=0$ by Case I, \ref{subsubsec:caseI}. The short exact sequence $0 \to H^{-1}(E)[1] \to E \to H^0(E) \to 0$ in $\overline{\Pslicing}_{\beta, f^\ast\eta}((0, 1])$ gives us that $\phi_s(E) \leq \phi_s(H^0(E))$ for all $s \gg 0$, provided that $H^0(E) \neq 0$. This implies that $f^\ast\eta.\ch_1(H^0(E))=0$, as otherwise \eqref{eq:presupp_auxslope} gives a contradiction. Thus, $H^0(E)$ is supported on $\Exc(f)$ up to finitely many points. 

Note from Corollary \ref{cor:hslope_HNfactors} that all the Harder--Narasimhan factors of $H^{-1}(E)$ lie in $\Ftors$. As before, this implies that $H^{-1}(E)$ is $f^\ast\eta$-semistable. If $H^{-1}(E)$ has $f^\ast\eta$-slope $\beta.f^\ast\eta$, we get that $E$ is in (F4).

At last, assume that $H^{-1}(E)$ has slope smaller than $\beta.f^\ast\eta$. The torsion subsheaf of $H^{-1}(E)$, if not zero, would give a destabilizing subobject of $E$ for $s \gg 0$. It follows that $H^{-1}(E)$ is torsion-free, hence $E$ satisfies (F5).

\subsubsection{Case IV}

We assume now that $\ch_0(H^0(E))=0$ and $\ch_0(H^{-1}(E)) =0$. Here $H^{-1}(E)$ is supported on $\Exc(f)$ by Lemma \ref{lemma:idheart_pointsonT}, and so it has $\sigma_s$-phase 1 (or $H^{-1}(E)=0$). The short exact sequence
\[ 0 \to H^{-1}(E)[1] \to E \to H^0(E) \to 0 \]
in $\overline{\Pslicing}_{\beta, f^\ast\eta}((0, 1])$ implies that either $H^{-1}(E)=0$ or $E$ is $\sigma_s$-semistable of phase $1$.

If $H^{-1}(E)=0$, then $E$ is torsion, falling into (F2). Otherwise, we have that $H^0(E)$ must also have phase 1, and it is torsion as $\ch_0(H^0(E))=0$. Thus $0 = \Im Z_{\beta, f^\ast\eta}(H^0(E)) = \ch_1(H^0(E)).f^\ast\eta$ implies that $H^0(E)$ is supported on finitely many points and $\Exc(f)$. This falls into (F3).

\subsection{Bounds} \label{subsec:bounds}

In this subsection we will give the bounds that will help us prove the support property. As we mentioned in the introduction, we are only able to prove the support property when the connected components of $\Exc(f)$ are chains of curves (i.e. we do not allow $T$ to have singularities of type $D_n, n \geq 4$ or $E_6, E_7, E_8$), and for certain values of $\beta$. For this we will need to introduce some notation.

Denote by $C_1, \dots, C_\ell$ the connected components of $\Exc(f)$. Each one is a chain, say $C_i = C_{i,1} \cup \dots \cup C_{i, r_i}$ with $C_{i,j}^2 = -n_{i,j}$. The $C_{i,j}$ form the basis of a negative definite subspace of $\NS(S)_\R$ by the negativity lemma, cf. \cite{Mum61}*{p. 6}. This way, we fix a norm $\norm{\cdot}$ on $\NS(S)_\R$, such that its restriction on $\bigoplus_{i,j}\R C_{i,j}$ is the negative of the intersection product:
\begin{equation} \label{eq:bounds_norm}
\norm{\sum_{i,j} a_{i,j}C_{i,j}} = -\left( \sum_{i,j} a_{i,j}C_{i,j} \right)^2.
\end{equation}
We also fix a number $M>0$ such that $\norm{\sum_{i,j} a_{i,j}C_{i,j}} \leq M \sum_{i,j} \abs{a_{i,j}}$ for all $a_{i,j} \in \R$.

By Condition \ref{cond:prestab_beta} we have that $\beta.C_{i,j}-n_{i,j}/2$ is not an integer. This way, we fix integers $k_{i,j}$ such that
\begin{equation} \label{eq:bounds_kij}
k_{i,j}-1 < \beta.C_{i,j} - \frac{n_{i,j}}{2} < k_{i,j}.
\end{equation}
Moreover, we are also assuming that $\beta.(C_{i, j}+\dots+C_{i, j})-(n_{i, j}+\dots +n_{i, j'})/2$ is not an integer for all $1 \leq j\leq j' \leq r_i$. Using \eqref{eq:bounds_kij} we get the bound
\begin{equation} \label{eq:bounds_presmallkij}
(k_{i,j}+\dots+k_{i,j'})-(j'-j+1) < \beta.(C_{i, j}+\dots+C_{i, j'})-\frac{n_{i, j}+\dots +n_{i, j'}}{2}.
\end{equation}
We will impose the following additional condition on $\beta$.

\begin{condition} \label{cond:bounds_smallkij}
In the previous notation, assume that
\[ \beta.(C_{i, j}+\dots+C_{i, j'})-\frac{n_{i, j}+\dots +n_{i, j'}}{2}<(k_{i,j}+\dots+k_{i,j'})-(j'-j) \]
for all $1 \leq j\leq j' \leq r_i$. 
\end{condition}

\begin{remark}
Note that this condition is non-vacuous. In fact, set $\beta.C_{i,j}=k_{i,j}+n_{i,j}/2-1+\epsilon$ for $\epsilon$ small enough. This defines an element $\beta \in \bigoplus_{i,j} \R C_{i,j} \subset \NS(S)$ satisfying Condition \ref{cond:bounds_smallkij}.
\end{remark}

We start with the following version of \cite{TX22}*{Lemma 6.5} that we will use to bound intersections of curves outside of $\Exc(f)$.

\begin{lemma} \label{lemma:bounds_effectivenotExc}
There exists a constant $A>0$, depending only on the ample class $\eta$, such that for any effective divisor $D$ without components in $\Exc(f)$, we have (i) $\norm{D} \leq \sqrt{A/\eta^2} (D.f^\ast\eta)$, (ii) $D.C_{i,j} \leq \sqrt{\frac{A}{\eta^2}}(D.f^\ast\eta)$, and (iii) $D^2 + \frac{A}{\eta^2}(D.f^\ast\eta)^2 \geq 0$.
\end{lemma}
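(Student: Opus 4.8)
The plan is to prove all three inequalities simultaneously by exploiting that $f^\ast\eta$ is big and nef, and that the restriction of the intersection form to the span of the exceptional curves is negative definite. Let me sketch the approach.

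=== THE STATEMENT TO PROVE ===

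\begin{lemma}
There exists a constant $A>0$, depending only on the ample class $\eta$, such that for any effective divisor $D$ without components in $\Exc(f)$, we have (i) $\norm{D} \leq \sqrt{A/\eta^2}(D.f^\ast\eta)$, (ii) $D.C_{i,j} \leq \sqrt{A/\eta^2}(D.f^\ast\eta)$, and (iii) $D^2 + \frac{A}{\eta^2}(D.f^\ast\eta)^2 \geq 0$.
\end{lemma}

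=== MY PROOF PROPOSAL ===

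\begin{proof}[Proof proposal]
The plan is to reduce everything to the existence of a single constant controlling how far the numerical class of $D$ can point along the negative-definite subspace $N := \bigoplus_{i,j}\R C_{i,j}$, using that $f^\ast\eta$ is big and nef and that $D$ has no components in $\Exc(f)$.

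First I would establish the key geometric input: since $D$ is effective with no components in $\Exc(f)$, each intersection number $D.C_{i,j}$ is non-negative (an effective divisor meets an irreducible curve not contained in it non-negatively), and $D.f^\ast\eta = \eta.f_\ast D \geq 0$ with $f_\ast D$ an effective divisor on $T$. The heart of the matter is a \emph{finiteness/compactness} argument. Consider the orthogonal decomposition of $\NS(S)_\R$ with respect to the intersection form, writing the class of $D$ as $D = D^\perp + D^N$, where $D^N \in N$ is the projection onto the span of the exceptional curves and $D^\perp$ lies in the $f^\ast$-pullback part (recall from the Conventions that the image of $f^\ast$ is the orthogonal complement of $N$, which is positive semidefinite and positive definite on the big-and-nef direction $f^\ast\eta$). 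The quantity $D.f^\ast\eta = D^\perp.f^\ast\eta$ depends only on $D^\perp$, while $\norm{D}$ and the numbers $D.C_{i,j}$ are controlled by $D^N$. So the content is that the ``negative part'' $D^N$ cannot be large compared to $D.f^\ast\eta$.

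The main step is then (i), from which (ii) and (iii) follow easily. To prove (i), I would argue by contradiction using a limiting/homogeneity argument. Suppose no such $A$ exists; then there is a sequence of effective divisors $D_m$ (no components in $\Exc(f)$) with $\norm{D_m}/(D_m.f^\ast\eta) \to \infty$. After rescaling the numerical classes so that $D_m.f^\ast\eta = 1$ (possible since this quantity is positive; if it were zero then $f_\ast D_m = 0$, forcing $D_m$ to be supported on $\Exc(f)$, contrary to hypothesis), the classes $D_m$ would have $\norm{D_m}\to\infty$, i.e. the negative parts $D_m^N$ would be unbounded. But the positive-definiteness of the intersection form on the $f^\ast$-direction, combined with the constraint $D_m.C_{i,j}\geq 0$ for all $i,j$, confines the admissible classes: the inequalities $D.C_{i,j}\geq 0$ for all $i,j$ cut out a rational polyhedral cone, and on the slice $\{D.f^\ast\eta = 1\}$ of this cone the function $\norm{\,\cdot\,}$ is bounded because the cone's recession directions satisfying $D.f^\ast\eta = 0$ and $D.C_{i,j}\geq 0$ all lie in $N$ yet must have non-negative self-intersection on a negative-definite space, forcing them to be zero. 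This compactness (a bounded slice of a polyhedral cone) yields a uniform bound $\norm{D}\leq \sqrt{A/\eta^2}(D.f^\ast\eta)$; the specific constant is then extracted by homogeneity, and $\eta^2$ is inserted so that $A$ depends only on $\eta$.

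Finally, (ii) follows from (i) together with the uniform comparability $\abs{D.C_{i,j}} = \abs{D^N.C_{i,j}} \leq \mathrm{const}\cdot\norm{D^N}^{1/2}\leq\mathrm{const}\cdot\norm{D}^{1/2}$ by Cauchy--Schwarz for the (negated) form on $N$, absorbing constants into $A$ after possibly enlarging it; and (iii) follows because $D^2 = (D^\perp)^2 - \norm{D^N}\geq -\norm{D} \geq -\frac{A}{\eta^2}(D.f^\ast\eta)^2$ by (i) and the non-negativity $(D^\perp)^2\geq 0$ on the nef part. I expect the \emph{main obstacle} to be making the compactness argument for (i) clean and uniform: one must ensure the constant $A$ genuinely depends only on $\eta$ (and the fixed configuration $\Exc(f)$) and not on $D$, which requires carefully checking that the polyhedral cone cut out by $\{D.C_{i,j}\geq 0\}$ has no recession rays escaping to infinity on the slice $D.f^\ast\eta = 1$—precisely the point where negative-definiteness of the form on $N$ is used decisively.
\end{proof}
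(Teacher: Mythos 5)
Your deductions of (ii) and (iii) from (i) are essentially fine (the paper does the same, using that $\abs{D.D'}/(\norm{D}\,\norm{D'})$ is bounded), but the core of your argument for (i) has a genuine gap: the compactness claim is false. You use effectivity of $D$ only through the sign conditions $D.C_{i,j}\geq 0$ (and $D.f^\ast\eta\geq 0$), and then try to conclude by linear algebra on the polyhedral cone cut out by these inequalities. However, the recession directions of the slice $\{v \in \NS(S)_\R : v.C_{i,j}\geq 0 \ \forall i,j, \ v.f^\ast\eta=1\}$ are the classes $v$ with $v.C_{i,j}\geq 0$ and $v.f^\ast\eta=0$; such $v$ lie in the hyperplane $(f^\ast\eta)^\perp$, \emph{not} in $N=\bigoplus_{i,j}\R C_{i,j}$ as you assert. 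Concretely, as soon as $\rho(S) > 1+\#\{C_{i,j}\}$ (equivalently $\rho(T)\geq 2$) there is a nonzero class $v_0$ orthogonal to $f^\ast\eta$ and to every $C_{i,j}$; then $w_t=\tfrac{1}{\eta^2}f^\ast\eta+tv_0$ satisfies all your linear constraints, has $w_t.f^\ast\eta=1$, and $\norm{w_t}\to\infty$. So the purely numerical statement you are proving is false: the bound cannot hold for all classes satisfying the sign conditions, and effectivity must enter in an essentially geometric way. (A related error: the image of $f^\ast$ is not positive semidefinite --- it has signature $(1,\rho(T)-1)$ --- so your step $(D^\perp)^2\geq 0$ in the proof of (iii) is also unjustified; this is minor, since (iii) follows from (i) by boundedness of the intersection form, without any decomposition.)

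The missing ingredient is a boundedness theorem for \emph{effective} divisors: for an ample class $\omega$ there is a constant $c_\omega$ such that $\norm{D}\leq c_\omega (D.\omega)$ for every effective divisor $D$. This is \cite{Laz04}*{1.4.29}, and its proof requires geometric input (decompose $D=\sum a_iD_i$ into irreducible curves and combine adjunction, $D_i^2\geq -2-D_i.K_S$, with the Hodge index theorem); it does not follow from the polyhedral structure of any cone of numerical classes. Granting this, the paper's proof is short: choose $\delta_{i,j}>0$ small enough that $\omega = f^\ast\eta-\sum_{i,j}\delta_{i,j}C_{i,j}$ is ample, apply the boundedness theorem to $\omega$, and observe that $D.\omega\leq D.f^\ast\eta$ precisely because $D.C_{i,j}\geq 0$ --- this is the one place where your use of effectivity is correct and suffices. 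That gives (i) with a constant depending only on $\eta$, and then (ii) and (iii) follow as you indicate. If you replace your cone-slice compactness by this boundedness theorem (or reprove it along the lines above), your argument goes through.
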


\begin{proof}
Pick numbers $\delta_{i,j}>0$ small enough such that $f^\ast\eta-\sum_{i,j}\delta_{i,j}C_{i,j}$ is ample. This way, we have that
\[ \norm{D} \leq \sqrt{\frac{A}{\eta^2}}(D.(f^\ast\eta-\sum_{i,j}\delta_{i,j}C_{i,j})) \]
for all $D$ effective, cf. \cite{Laz04}*{1.4.29}. The first result follows immediately as $D.C_{i,j}\geq 0$. The other two follow from here directly, as $\abs{D.D'}/(\norm{D}\norm{D'})$ is bounded.
\end{proof}

The next lemma will bound the objects of phase one supported on $\Exc(f)$.

\begin{lemma} \label{lemma:bounds_onExc}
Let $E \in \overline{\Pslicing}_{\beta, f^\ast\eta}((0, 1])$ be an object supported on $\Exc(f)$. Then there exists a constant $\delta>0$ such that $\delta^2\ch_1(E)^2 +  (\ch_2^\beta(E))^2 \geq 0$.  
\end{lemma}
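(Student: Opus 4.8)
The plan is to turn the quadratic inequality into a linear one and then control it by a Jordan--Hölder decomposition in the phase-one category. First I would record the elementary reductions. Since $E$ is supported on $\Exc(f)$ we have $\ch_0(E)=0$ and $\ch_1(E)$ lies in the subspace $\bigoplus_{i,j}\R C_{i,j}$, which is negative definite, so $\ch_1(E)^2=-\norm{\ch_1(E)}^2$. Moreover $\Im Z_{\beta,f^\ast\eta}(E)=f^\ast\eta.\ch_1(E)=0$; since the Harder--Narasimhan factors of $E$ all have $\Im Z\ge 0$ and these sum to $0$, each factor has phase one, so any nonzero such $E$ lies in $\overline{\Pslicing}_{\beta,f^\ast\eta}(1)$ and satisfies $\ch_2^\beta(E)=-\Re Z_{\beta,f^\ast\eta}(E)>0$. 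The desired inequality is immediate when $\ch_1(E)=0$, so it suffices to produce $\delta>0$ with $\ch_2^\beta(E)\ge\delta\norm{\ch_1(E)}$; squaring then yields $\delta^2\ch_1(E)^2+(\ch_2^\beta(E))^2\ge 0$.

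Next I would decompose $E$. The weak support property of Lemma~\ref{lemma:weaksupp_main} (and the corollary following it) makes $\overline{\sigma}_{\beta,f^\ast\eta}$ locally finite, so $\overline{\Pslicing}_{\beta,f^\ast\eta}(1)$ is a finite length abelian category. I would take a Jordan--Hölder filtration of $E$ there, with simple factors $A_1,\dots,A_n$, $n\ge 1$. Each $A_t$ is again supported on $\Exc(f)$ and, being of phase one, satisfies $\ch_2^\beta(A_t)>0$. Since $\ch_1$ and $\ch_2^\beta$ are additive, $\ch_2^\beta(E)=\sum_t\ch_2^\beta(A_t)$ and $\ch_1(E)=\sum_t\ch_1(A_t)$.

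The heart of the matter is to pin down these simple factors. Using the torsion pair of Corollary~\ref{cor:idheart_torspair}, simplicity forces each $A_t$ to be, up to the shift coming from $\Ftors$, the pushforward of a coherent sheaf on $\Exc(f)$ with connected support, hence contained in a single chain; reducedness of its scheme-theoretic support together with Theorem~\ref{teo:chain_tf} and the analysis behind Proposition~\ref{prop:prestab_critstable} identify it as a line bundle on a reduced connected sub-chain. Consequently $\ch_1(A_t)$ equals $\pm$ the reduced cycle of one of the finitely many connected sub-chains of $\Exc(f)$, so $\norm{\ch_1(A_t)}\le\kappa$ for a uniform constant $\kappa$. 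A Grothendieck--Riemann--Roch computation writes $\ch_2^\beta(A_t)=m-S_0$ with $m\in\Z$, where $S_0=\beta.(\sum_m C_{i,m})-\tfrac12\sum_m n_{i,m}$ is the quantity attached to that sub-chain; the bounds \eqref{eq:bounds_presmallkij} and Condition~\ref{cond:bounds_smallkij} confine $S_0$ to a unit interval with integer endpoints (and Condition~\ref{cond:prestab_beta} guarantees $S_0\notin\Z$), so the positive values of $\ch_2^\beta(A_t)$ are bounded below by a uniform $c_0>0$.

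Finally I would add up: $\ch_2^\beta(E)=\sum_t\ch_2^\beta(A_t)\ge n\,c_0$, while the triangle inequality gives $\norm{\ch_1(E)}\le\sum_t\norm{\ch_1(A_t)}\le n\,\kappa$. As $n\ge 1$ this yields $\ch_2^\beta(E)\ge n\,c_0\ge (c_0/\kappa)\norm{\ch_1(E)}$, so $\delta=c_0/\kappa$ works. I expect the main obstacle to be the structural classification in the third paragraph: showing that a simple object of $\overline{\Pslicing}_{\beta,f^\ast\eta}(1)$ supported on $\Exc(f)$ must be a line bundle on a reduced sub-chain requires controlling its scheme-theoretic support, which is precisely the content extracted from Proposition~\ref{prop:prestab_critstable} and Theorem~\ref{teo:chain_tf}. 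Everything after that is additive bookkeeping with the inequalities fixed in this section.
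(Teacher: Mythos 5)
Your proposal is correct and follows essentially the same route as the paper: both reduce to simple (minimal) objects of the finite-length phase-one category, identify their $\ch_1$ as $\pm$ the reduced cycle of a connected sub-chain via (the analysis behind) Proposition \ref{prop:prestab_critstable}, bound $\ch_2^\beta$ below using positivity plus discreteness, and conclude by additivity. The only differences are expository: you spell out the Jordan--H\"older bookkeeping and triangle inequality that the paper leaves implicit, and you invoke \eqref{eq:bounds_presmallkij} and Condition \ref{cond:bounds_smallkij} for the lower bound on $\ch_2^\beta$, where the paper only needs $\ch_2(E) \in \tfrac{1}{2}\Z$ together with positivity.
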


\begin{proof}
It suffices to prove this for a minimal object $E \in \overline{\Pslicing}_{\beta, f^\ast\eta}((0, 1])$. Such object will be either $E=H^0(E) \in \Ttors$ or $E=H^{-1}(E)[1] \in \Ftors[1]$. Note also that $\Supp E$ is connected, so it is contained either on an ADE fiber (which are only chains of $(-2)$-curves now), or on a chain fiber.

\begin{itemize}
\item (ADE) Assume $E$ is supported on a chain of $(-2)$-curves. Note that $\Hom(E, E)$ is one-dimensional by the minimality. This way we get that $\ch_1(E) = \pm(C_{i,j}+\dots+C_{i,j'})$ for some $1\leq j\leq j' \leq r_i$, by Proposition \ref{prop:prestab_critstable}.

\item (Chain) Assume that $E$ is supported on a chain of curves, say on $C_{i, j} \cup \dots \cup C_{i, j'}$. Let us assume that $E=H^0(E)$; the other case is handled analogously.

We claim that $F \in \Ftors$ for all $F \subsetneq E$. In fact, if $F \subsetneq E$ is not in $\Ftors$, it has a subsheaf in $F' \in \Ttors$. This way, the short exact sequence
\[ 0 \to F' \to E \to E/F' \to 0 \]
of coherent sheaves is also a short exact sequence on $\overline{\Pslicing}_{\beta, f^\ast\eta}((0, 1])$. This contradicts the minimality of $E$. 

In particular, we have that for any non-trivial subsheaf $F \subset E$ that $\ch_2^\beta(F)<0$, while $\ch_2^\beta(E)>0$.. This way, given any ample divisor $\omega$, we have that $E$ is stable with respect to the slope $\lambda_{\beta, \omega}$, in the notation of Proposition \ref{prop:prestab_critstable}. It follows that $\ch_1(E)=C_{i,j} + \dots + C_{i,j'}$, as claimed. 
\end{itemize}
In any case, we get that the coefficients of $\ch_1(E)$ have a finite list of options. Similarly, we have that $\ch_2^\beta(E)$ is bounded below, as $\ch_2^\beta(E)>0$ and $\ch_2(E) \in \frac{1}{2}\Z$. Putting these two results together gives us the claimed bound.
\end{proof}

\begin{cor} \label{cor:bounds_onExc}
Assume $E$ is supported on $\Exc(f)$. If $E \in \Ttors$, then $\ch_2^\beta(E) \geq \delta \norm{\ch_1^\beta(E)}$; if $E \in \Ftors$, then $-\ch_2^\beta(E) \geq \delta \norm{\ch_1^\beta(E)}$.
\end{cor}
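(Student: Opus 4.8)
The plan is to deduce Corollary \ref{cor:bounds_onExc} from Lemma \ref{lemma:bounds_onExc} by combining the quadratic inequality $\delta^2 \ch_1(E)^2 + (\ch_2^\beta(E))^2 \geq 0$ with the information we have on the sign of $\ch_2^\beta(E)$ and with the norm $\norm{\cdot}$ fixed in \eqref{eq:bounds_norm}. First I would recall that since $E$ is supported on $\Exc(f)$, its class $\ch_1(E)$ lies in the negative-definite subspace $\bigoplus_{i,j}\R C_{i,j}$, so by the definition of the norm in \eqref{eq:bounds_norm} we have $\norm{\ch_1(E)}^2 = -\ch_1(E)^2$, i.e. $\ch_1(E)^2 = -\norm{\ch_1(E)}^2$. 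Moreover, because $E$ is supported on $\Exc(f)$ we have $\ch_0(E)=0$, so $\ch_1^\beta(E)=\ch_1(E)$ and the two norms agree, $\norm{\ch_1^\beta(E)} = \norm{\ch_1(E)}$.

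Next I would substitute this into the inequality from Lemma \ref{lemma:bounds_onExc}. The bound $\delta^2 \ch_1(E)^2 + (\ch_2^\beta(E))^2 \geq 0$ becomes $(\ch_2^\beta(E))^2 \geq \delta^2 \norm{\ch_1^\beta(E)}^2$, and taking square roots (everything being nonnegative) yields $\abs{\ch_2^\beta(E)} \geq \delta \norm{\ch_1^\beta(E)}$. At this point the sign information finishes the argument. If $E \in \Ttors$, then since $E$ is supported on $\Exc(f)$ and lies in $\overline{\Pslicing}_{\beta, f^\ast\eta}((0,1])$ with $\ch_0(E)=0$, the condition $\Re Z_{\beta, f^\ast\eta}(E) = -\ch_2^\beta(E) \leq 0$ forces $\ch_2^\beta(E) \geq 0$; combined with the previous display this gives $\ch_2^\beta(E) = \abs{\ch_2^\beta(E)} \geq \delta\norm{\ch_1^\beta(E)}$. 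Symmetrically, if $E \in \Ftors$, then $E[1]$ lies in the heart, so $\Re Z_{\beta, f^\ast\eta}(E[1]) = \ch_2^\beta(E) \leq 0$, giving $-\ch_2^\beta(E) = \abs{\ch_2^\beta(E)} \geq \delta \norm{\ch_1^\beta(E)}$, which is the second claimed bound.

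The only point requiring a little care—and the one I would flag as the main (mild) obstacle—is reducing to the case where Lemma \ref{lemma:bounds_onExc} applies, since that lemma is proved for a \emph{minimal} object. For a general $E \in \Ttors$ (resp. $\Ftors$) supported on $\Exc(f)$, I would pass to its Jordan--Hölder factors in the finite-length category $\Cheart_{\beta, V}$, or more directly use additivity: both $\ch_2^\beta$ and, after checking the triangle inequality is oriented the right way, the norm behave well under the relevant exact sequences. The cleanest route is to note that the inequality $\ch_2^\beta \geq \delta\norm{\ch_1^\beta}$ is preserved under extensions within $\Ttors$ (the left-hand side is additive and the norm satisfies $\norm{a+b}\leq\norm{a}+\norm{b}$, with both summands having $\ch_2^\beta$ of the correct sign), so it suffices to establish it on minimal objects, where Lemma \ref{lemma:bounds_onExc} gives exactly the quadratic bound; the $\Ftors$ case is identical after shifting. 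Thus the corollary follows directly, and no new geometric input beyond Lemma \ref{lemma:bounds_onExc} and the sign constraints coming from membership in the heart is needed.
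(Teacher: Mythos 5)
Your proof is correct and follows essentially the same route as the paper's (one-line) argument: combine Lemma \ref{lemma:bounds_onExc} with the norm convention \eqref{eq:bounds_norm} (read as $\norm{\cdot}^2 = -(\cdot)^2$ on $\bigoplus_{i,j}\R C_{i,j}$, which is the only reading consistent with the statement) and the sign of $\ch_2^\beta$ forced on objects with $\Im Z_{\beta, f^\ast\eta}=0$ by membership in the heart. The minimality issue you flag as the main obstacle is not actually needed at the level of the corollary, since Lemma \ref{lemma:bounds_onExc} is stated for arbitrary objects of the heart supported on $\Exc(f)$ (the reduction to minimal objects is carried out inside its proof), so you may cite it directly; your Jordan--H\"older/extension argument merely reproduces that built-in reduction.
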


\begin{proof}
Follows directly from the choice of $\norm{\cdot}$ in \eqref{eq:bounds_norm} and the sign of $\ch_2^\beta(E)$. 
\end{proof}

So far, we have not used the restriction on $\beta$ given in Condition \ref{cond:bounds_smallkij}. We will use it to prove the following lemma, which will help us to establish our last bound.

\begin{lemma} \label{lemma:bounds_objectsinTF}
Pick $1 \leq i \leq \ell$ and $1 \leq j \leq j' \leq r_i$. We have that the sheaf\footnote{We write $C_{i,j,j'} = C_{i,j} \cup C_{i,j+1} \cup \dots \cup C_{i,j'}$ for the sake of readability.} $\O_{C_{i, j, j'}}(s_j, s_{j+1}, \dots, s_{j'})$ lies in $\Ftors$ if $s_a \leq k_a$ for all $j \leq a \leq j'$, with at least one strict inequality. We also have that $\O_{C_{i, j, j'}}(k_j, k_{j+1}, \dots, k_{j'}) \in \Ttors$. 
\end{lemma}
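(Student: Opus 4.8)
The plan is to reduce both assertions to the sign of $\Re Z_{\beta,f^\ast\eta}$ and then to control that sign on all pure one‑dimensional subsheaves of the chain line bundle. Throughout I abbreviate $C=C_{i,j,j'}$, $C_a=C_{i,a}$, $n_a=n_{i,a}$, $k_a=k_{i,a}$, and for $j\le a\le b\le j'$ I set $S_\beta(a,b)=\sum_{c=a}^b(\beta.C_c-n_c/2)$ and $K(a,b)=\sum_{c=a}^b k_c$. The point of \eqref{eq:bounds_presmallkij} together with Condition \ref{cond:bounds_smallkij} is that $S_\beta(a,b)$ lies strictly between the consecutive integers $K(a,b)-(b-a+1)$ and $K(a,b)-(b-a)$, and I will use both endpoints.

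First I would record the central charge of a pushforward of a line bundle on any connected subchain. A Grothendieck–Riemann–Roch computation (using that a chain of rational curves has arithmetic genus $0$ and that adjunction gives $C_c.K_S=n_c-2$) yields, for $L'=\O_{C_{i,a,b}}(t_a,\dots,t_b)$,
\[
\Re Z_{\beta,f^\ast\eta}(L')=S_\beta(a,b)-\sum_{c=a}^{b}t_c+(b-a),\qquad \Im Z_{\beta,f^\ast\eta}(L')=0,
\]
the imaginary part vanishing because $\ch_1(L')$ is supported on $\Exc(f)$. Since such a sheaf enters the heart $\overline{\Pslicing}_{\beta,f^\ast\eta}((0,1])$ only through $\Ttors$ or $\Ftors[1]$, and its phase is forced to be $1$, membership is dictated by this sign: a nonzero pure one‑dimensional $G$ on $\Exc(f)$ with $G\in\Ttors$ has $\Re Z_{\beta,f^\ast\eta}(G)<0$, while $G\in\Ftors$ has $\Re Z_{\beta,f^\ast\eta}(G)>0$.

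The heart of the argument — and the step I expect to be the \textbf{main obstacle} — is the following bound. For any pure one‑dimensional subsheaf $Q\subseteq\O_C(s_j,\dots,s_{j'})$ whose support is a connected subchain $C_a\cup\dots\cup C_b$ attached to the rest of $C$ at $\partial\in\{0,1,2\}$ nodes, I claim
\[
\Re Z_{\beta,f^\ast\eta}(Q)\ \ge\ S_\beta(a,b)-\sum_{c=a}^{b}s_c+\partial+(b-a).
\]
This rests on a structural fact about sheaves on nodal chains: the largest subsheaf of $\O_C(\vec s)$ set‑theoretically supported on $C_a\cup\dots\cup C_b$ is the line bundle $\O_{C_{i,a,b}}(\vec s)$ twisted down by one at each boundary node, so $\ch_1(Q)=\sum_{c=a}^b C_c$ and $\ch_2(Q)$ is at most $\ch_2$ of that twisted sheaf; the boundary twists produce the $+\partial$. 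The delicate points to get right are the degree drop at each boundary node, the fact that $Q$ may fail to be locally free at interior nodes (which only lowers $\chi(Q)$, and hence only helps), and the reduction of a disconnected support to a direct sum over its connected components. Feeding the lower endpoint $S_\beta(a,b)>K(a,b)-(b-a+1)$ and $\sum_c s_c\le K(a,b)$ into the display gives $\Re Z_{\beta,f^\ast\eta}(Q)>\partial-1$.

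With this in hand both statements follow. For the $\Ftors$ claim I show $\O_C(s_j,\dots,s_{j'})$ has no nonzero $\Ttors$‑subsheaf $Q$ (necessarily pure of dimension one, as $\O_C(\vec s)$ is pure): on any proper‑subchain component of $\Supp Q$ one has $\partial\ge1$, forcing $\Re Z(Q)>0$; if instead $\Supp Q=C$ (so $\partial=0$) the strict‑inequality hypothesis gives $\sum_c s_c\le K(j,j')-1$, which by the display again forces $\Re Z(Q)>0$; either way $Q\notin\Ttors$, so $\O_C(\vec s)\in\Ftors$. For the $\Ttors$ claim I first note $\Re Z(\O_C(\vec k))<0$ directly from Condition \ref{cond:bounds_smallkij}, and then rule out nonzero $\Ftors$‑quotients $F$: writing $F=\O_C(\vec k)/Q$, the kernel $Q$ is a pure one‑dimensional subsheaf, every proper‑subchain component contributes $\Re Z>0$, whence $\Re Z(F)=\Re Z(\O_C(\vec k))-\Re Z(Q)<0$ and $F\notin\Ftors$; a full‑support kernel forces $\ch_1(F)=0$, so $F$ is zero‑dimensional and lies in $\Ttors$ by Lemma \ref{lemma:idheart_pointsonT}. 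Hence $\O_C(\vec k)\in\Ttors$. The base case $j=j'$ is precisely Corollary \ref{cor:idheart_sheavesonP1}.
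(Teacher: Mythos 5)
Your proof is correct, and it reaches the statement by a genuinely different route than the paper's, at least for the $\Ftors$ half. There, the paper argues by induction on the length of the chain: choosing an index $a$ with $s_a<k_a$, it exhibits $\O_{C_{i,j,j'}}(s_j,\dots,s_{j'})$ as an extension of $\O_{C_{i,a}}(s_a)$ by a direct sum of line bundles on the two complementary subchains, twisted down by one at the cut nodes, and concludes from the closure of $\Ftors$ under extensions, with Corollary \ref{cor:idheart_sheavesonP1} as the base case. You instead rule out nonzero $\Ttors$-subsheaves directly by bounding $\Re Z_{\beta,f^\ast\eta}$ from below on every pure one-dimensional subsheaf; this avoids the induction, but uses both sides of the sandwich $K(a,b)-(b-a+1)<S_\beta(a,b)<K(a,b)-(b-a)$ in your notation (i.e., both \eqref{eq:bounds_presmallkij} and Condition \ref{cond:bounds_smallkij}), whereas the paper's induction needs only the per-curve lower bounds entering the base case. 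For the $\Ttors$ half the two arguments are closer: both start from $\ch_2^\beta(\O_{C_{i,j,j'}}(k_j,\dots,k_{j'}))>0$ and rule out nonzero $\Ftors$-quotients; the paper does this by observing that any pure one-dimensional quotient is a direct sum of restrictions to connected subchains with the \emph{unchanged} degrees $k_a$, to which Condition \ref{cond:bounds_smallkij} applies verbatim, while you pass through the kernel, which costs you the extra case distinction for a full-support kernel (where you correctly fall back on $\ch_1(F)=0$ and Lemma \ref{lemma:idheart_pointsonT}). Note that the geometric input is the same in both proofs: the short exact sequence in the paper's induction step is precisely the boundary-twist (ideal sheaf) sequence underlying your description of maximal subsheaves supported on subchains. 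What your version buys is a uniform mechanism for both halves and fully explicit numerics; what the paper's buys is brevity, by leaning on the closure properties of the torsion pair instead of estimating $\Re Z_{\beta,f^\ast\eta}$ on arbitrary subsheaves.
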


\begin{proof}
The first part of the result follows by induction on $j'-j$, where the base case is Corollary \ref{cor:idheart_sheavesonP1}. For the induction step pick an entry $a$ such that $s_a<k_a$, and look at the exact sequence\footnote{We are assuming $j<a<j'$; a small modification applies when $a=j$ or $a=j'$.}
\begin{align*}
0 \to& \O_{C_{i,j,a-1}}(s_j, \dots, s_{a-2}, s_{a-1}-1) \oplus \O_{C_{i, a+1,j'}}(s_{a+1}-1, s_{a+2}, \dots, s_{j'}) \\
&\to \O_{C_{i,j,j'}}(s_j, \dots, s_{j'}) \to \O_{C_{i,a}}(s_a) \to 0.  
\end{align*}
The first term lies in $\Ftors$ by induction, and the second one lies in $\Ftors$ as $s_a \leq k_a-1$ (and by the base case). This proves the required result. 

For the second part, note that
\[ \ch_2^\beta(\O_{C_{i,j,j'}}(k_j, \dots, k_{j'})) = \sum_{a=j}^{j'}\left( k_a+\frac{n_{i,a}}{2}\right)-(j'-j)-\beta.(C_{i,j}+\dots + C_{i,j'}) >0 \]
by Condition \ref{cond:bounds_smallkij}. This shows that $\O_{C_{i,j,j'}}(k_j, \dots, k_{j'})$ cannot lie in $\Ftors$. Not only that, but this also says that no pure dimension one quotient lies in $\Ftors$. The only option is that $\O_{C_{i,j,j'}}(k_j, \dots, k_{j'}) \in \Ttors$, as $\Ttors, \Ftors$ is a torsion pair of $\Coh(S)$. 
\end{proof}

As a consequence we will prove our final bound, cf. \cite{Tod13}*{3.7}.

\begin{lemma} \label{lemma:bounds_ch1}
Let $E \in \overline{\Pslicing}_{\beta, f^\ast\eta}((0, 1))$ be an object without components of phase 1. Write $\ch_1(H^0(E)_{tors}) = \sum_{i,j} b_{i,j}C_{i,j} +\Gamma$, where $\Gamma$ is effective and without components in $\Exc(f)$. Given $1 \leq i \leq \ell$ and $1 \leq j \leq j' \leq r_i$, we have that
\begin{gather*}
- (C_{i,j}+\dots+C_{i,j'}).\ch_1(E) + \left( \sum_{a=j}^{j'}(k_{i,a}+n_{i,a})-(2j'-2j+1)\right)\ch_0(E) \\
\leq \ch_0(H^0(E)) + b_{i, j} + \sum_{a=j}^{j'}(n_{i, a}-2)(\ch_0(H^0(E)) + b_{i, a}),
\end{gather*} 
and similarly by replacing $b_{i,j}$ with $b_{i,j'}$. 
\end{lemma}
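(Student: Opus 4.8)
The plan is to realize the left-hand side as an Euler characteristic and then bound that Euler characteristic from above by a single $\Hom$-space, adapting \cite{Tod13}*{3.7} to the chain. Set $L := \O_{C_{i,j,j'}}(k_{i,j}, \dots, k_{i,j'})$, which lies in $\Ttors$ by Lemma \ref{lemma:bounds_objectsinTF}; being supported on $\Exc(f)$ it has $\Im Z_{\beta, f^\ast\eta}(L) = 0$ and $\Re Z_{\beta, f^\ast\eta}(L) < 0$, so it is an object of $\overline{\Pslicing}_{\beta, f^\ast\eta}(1)$, i.e.\ of phase one. A Grothendieck--Riemann--Roch computation, using $\ch_1(L) = C_{i,j} + \dots + C_{i,j'}$, the value $\ch_2(L) = \sum_a k_{i,a} + \tfrac12\sum_a n_{i,a} - (j'-j)$, and $(C_{i,j}+\dots+C_{i,j'}).K_S = \sum_a(n_{i,a}-2)$, shows that $\chi(L,E) = \dim\Hom^0(L,E) - \dim\Ext^1(L,E) + \dim\Ext^2(L,E)$ equals exactly the left-hand side of the asserted inequality. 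This identification is precisely what motivates the twist by the integers $k_{i,a}$ from \eqref{eq:bounds_kij}, so it suffices to bound $\chi(L,E)$ from above.

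First I would kill the degree-zero term. Since $E \in \overline{\Pslicing}_{\beta, f^\ast\eta}((0,1))$ has all Harder--Narasimhan factors of phase strictly less than one, while every subquotient of $L$ in the heart is again supported on $\Exc(f)$ and hence of phase one, the phase comparison forces $\Hom(L, E) = 0$. Therefore $\chi(L,E) \le \dim\Ext^2(L,E)$. Serre duality on the surface $S$ gives $\Ext^2(L,E) \cong \Hom(E, L\otimes\omega_S)^\vee$, and applying $\Hom(-, L\otimes\omega_S)$ to the triangle $H^{-1}(E)[1] \to E \to H^0(E)$, together with the vanishing $\Ext^{-1}(H^{-1}(E), L\otimes\omega_S) = \Ext^{-2}(H^{-1}(E), L\otimes\omega_S) = 0$, identifies $\Hom(E, L\otimes\omega_S) \cong \Hom(H^0(E), L\otimes\omega_S)$. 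By adjunction on each component $L\otimes\omega_S = \O_{C_{i,j,j'}}(k_{i,j}+n_{i,j}-2, \dots, k_{i,j'}+n_{i,j'}-2)$, so the whole problem reduces to bounding $\dim\Hom(H^0(E), L\otimes\omega_S)$.

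The remaining, and main, step is a counting bound for $\Hom(H^0(E), L\otimes\omega_S)$. Since the target is supported on the reduced chain, every such map factors through $\mathcal{G} := i^\ast H^0(E)$, and by Theorem \ref{teo:chain_tf} I would decompose $\mathcal{G}$ as a direct sum of torsion and of line bundles on connected subchains; the torsion summands contribute nothing to maps into a line bundle. The key input is that any line-bundle quotient $\O_{C_{i,a}}(d)$ of $\mathcal{G}$ is a quotient of $H^0(E) \in \Ttors$, hence itself lies in $\Ttors$, hence satisfies $d \ge k_{i,a}$ by Corollary \ref{cor:idheart_sheavesonP1} and \eqref{eq:bounds_kij}; this bounds below the multidegrees of every summand of $\mathcal{G}$, component by component. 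The restriction $\mathcal{G}$ also has generic rank $\ch_0(H^0(E)) + b_{i,a}$ along $C_{i,a}$, the extra $b_{i,a}$ coming exactly from the torsion of $H^0(E)$ supported on that component. On a single $\P^1$ this immediately gives the clean bound $(n_{i,j}-1)(\ch_0(H^0(E)) + b_{i,j})$, which is the $j=j'$ case of the right-hand side.

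The hard part will be carrying out this count on a reducible nodal curve rather than on $\P^1$: one must control how homomorphisms behave at the nodes joining consecutive components, where the naive $h^0$-offset $+1$ of each $\P^1$-factor gets absorbed for interior components. This is what produces the single boundary term $\ch_0(H^0(E)) + b_{i,j}$ attached to one end of the chain --- and the symmetric statement with $b_{i,j'}$, obtained by peeling the opposite end --- as opposed to the bulk contribution $\sum_a(n_{i,a}-2)(\ch_0(H^0(E))+b_{i,a})$ coming uniformly from every component. I would organize the estimate as an induction on $j'-j$, peeling off the end component $C_{i,j}$ (respectively $C_{i,j'}$) by means of the short exact sequence of sheaves on the chain used in the proof of Lemma \ref{lemma:bounds_objectsinTF}, and at each stage invoking the degree lower bound $d \ge k_{i,a}$ to cap each strand's contribution by $n_{i,a}-1$.
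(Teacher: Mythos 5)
Your first half is exactly the paper's argument: you take the same twisted line bundle $L=\O_{C_{i,j,j'}}(k_{i,j},\dots,k_{i,j'})$, place it in $\Ttors$ via Lemma \ref{lemma:bounds_objectsinTF} (this is where Condition \ref{cond:bounds_smallkij} enters), identify the left-hand side with $\chi(L,E)$ by Riemann--Roch, kill $\Hom(L,E)$ by the phase comparison, and reduce via Serre duality to bounding $\dim\Hom(H^0(E),L\otimes\omega_S)$. Up to this point there is nothing to object to.

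From there the two proofs diverge, and yours is the harder road. The paper never counts sections on the nodal chain: it uses the short exact sequence \eqref{eq:bounds_keyses}, $0 \to \O_{C_{i,j,j'}}(k_{i,j}-1,k_{i,j+1},\dots,k_{i,j'}) \to L\otimes\omega_S \to \O_Z \to 0$, whose sub lies in $\Ftors$ by the first part of Lemma \ref{lemma:bounds_objectsinTF} and whose quotient $\O_Z$ is reduced, with $n_{i,j}-1$ general points on $C_{i,j}$ and $n_{i,a}-2$ on the other components. Since $E$ has no phase-one factors, applying $\Hom(E,-)$ gives $\dim\Hom(E,L\otimes\omega_S)\leq\dim\Hom(H^0(E),\O_Z)$, and the latter is bounded point by point by the generic ranks $\ch_0(H^0(E))+b_{i,a}$; the asymmetric placement of the $n_{i,j}-1$ points is precisely what produces the single boundary term $\ch_0(H^0(E))+b_{i,j}$ (or $b_{i,j'}$, twisting at the other end). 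Your alternative --- restrict $H^0(E)$ to the chain, decompose by Theorem \ref{teo:chain_tf}, and bound the multidegrees below by $k_{i,a}$ using that $\Ttors$ is closed under quotients --- is sound as far as it goes (one small fix: the torsion of $i^\ast H^0(E)$ need not split off as a direct summand, but any map into the pure sheaf $L\otimes\omega_S$ kills it, so you may pass to the pure quotient). However, the count you defer as ``the hard part'' is the entire content of the lemma. It can be completed: per summand on a subchain one has the peeling bound $h^0\leq\max(e_{a_0}+1,0)+\sum_{a\neq a_0}\max(e_a,0)$, peeling from the end nearest $C_{i,j}$; a summand whose subchain does not have $C_{i,j}$ as its left end loses its ``$+1$'' to the twist at the external node; and the number of summands through $C_{i,a}$ is at most the generic rank $\ch_0(H^0(E))+b_{i,a}$. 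Assembling these three estimates reproduces the stated inequality, so your outline is completable --- but as written it is an outline, and the paper's skyscraper device makes all of this bookkeeping unnecessary. Either prove the three estimates above, or adopt the twisting sequence \eqref{eq:bounds_keyses}.
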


\begin{proof}
Let us compute $\chi(\O_{C_{i,j,j'}}(k_{i,j}, \dots, k_{i,j'}), E)$ in two different ways. First, by Hirzebruch--Riemann--Roch it equals
\[ - (C_{i,j}+\dots+C_{i,j'}).\ch_1(E) + \left( \sum_{a=j}^{j'}(k_{i,a}+n_{i,a})-(2j'-2j+1)\right)\ch_0(E). \]
Second, we can write
\begin{equation} \label{eq:bounds_eulerchar}
\begin{aligned}
&\chi(\O_{C_{i,j,j'}}(k_{i,j}, \dots, k_{i,j'}), E) \\ =& \sum_{s=1}^\infty (-1)^s \dim \Hom^s(\O_{C_{i,j,j'}}(k_{i,j}, \dots, k_{i,j'}), E),
\end{aligned}
\end{equation}
where we used that both object lie in the heart of a t-structure together with the assumption that $E$ has no factor in phase 1. To bound the remaining terms we use Serre duality:
\[ = \sum_{s=1}^\infty (-1)^s \dim \Hom^{2-s}(E, \O_{C_{i,j,j'}}(k_{i,j}+n_{i,j}-2, \dots, k_{i,j'}+n_{i,j'}-2)) \]
We have to be careful here, as the coherent sheaf $\O_{C_{i,j,j'}}(k_{i,j}+n_{i,j}-2, \dots, k_{i,j'}+n_{i,j'}-2)$ will not be in general in the heart of our t-structure. Instead, we have short exact sequences
\begin{equation} \label{eq:bounds_keyses}
\begin{gathered}
0 \to \O_{C_{i,j,j'}}(k_{i,j}-1, k_{i, j+1} \dots, k_{i,j'}) \to \O_{C_{i,j,j'}}(k_{i,j}+n_{i,j}-2, \dots, \\
k_{i,j'}+n_{i,j'}-2) \to \O_Z \to 0,
\end{gathered}
\end{equation}
where $Z$ is a reduced zero-dimensional subscheme with $n_{i,j}-1$ points in $C_{i,j}$, and $n_{i,a}-2$ points on the remaining curves. The first term now lies in $\Ftors$, while the last one lies in $\Ttors$. Applying $\Hom^t(E, -)$ to \eqref{eq:bounds_keyses} shows that $\Hom^t(E, \O_{C_{i,j,j'}}(k_{i,j}+n_{i,j}-2, \dots, k_{i,j'}+n_{i,j'}-2))$ is zero for $t \leq 0$ and 
\[ \dim \Hom(E, \O_{C_{i,j,j'}}(k_{i,j}+n_{i,j}-2, \dots, k_{i,j'}+n_{i,j'}-2)) \leq \dim \Hom(E, \O_Z). \]
Plugging these into \eqref{eq:bounds_eulerchar} (together with the trivial bound on the $s=1$ term) yields
\begin{equation} \label{eq:bounds_eulerchartwo}
\chi(\O_{C_{i,j,j'}}(k_{i,j}, \dots, k_{i,j'}), E) \leq \dim \Hom(E, \O_Z).
\end{equation}
It remains to bound $\Hom(E, \O_Z) = \Hom(H^0(E), \O_Z)$. To do so, we write the short exact sequence $0 \to H^0(E)_{tors} \to H^0(E) \to H^0(E)_{tf} \to 0$ of coherent sheaves. If $p \in C_{i,a}$ is a general point (avoiding all other curves of $\Supp(H^0(E)_{tors})$, and contained in the locus where $H^0(E)_{tf}$ is free), then the bound $\dim\Hom(H^0(E), \O_p) \leq \ch_0(H^0(E)) + b_{i, a}$ follows. Using these to bound $\dim \Hom(E, \O_Z)$ and plugging them in \eqref{eq:bounds_eulerchartwo} gives us the claimed inequality. 
\end{proof}

We will use this lemma to bound some of the objects coming from Lemma \ref{lemma:bridgie_main}. We write $N_1=\frac{1}{2}\sum_{i,j}\sum_{a=j}^{r_i} n_{i,a}$, $P=-\left( \sum_{i,j}\sum_{a=j}^{r_i}(C_{i,1}+\dots+C_{i,a}) \right)^2$, $L=\sum_{i=1}^\ell \sum_{j=1}^{r_i} j^2$, and $N_5=\sum_{i,j} \sum_{j'=j}^{r_i}\left( -j'+1 + \sum_{a=1}^{j'} \frac{n_{i, a}}{2} \right)$.

\begin{cor} \label{cor:bounds_F125}
\begin{enumerate}
\item[(F1)] Assume that $E \in \overline{\Pslicing}_{\beta, f^\ast\eta}((0, 1))$ has $H^{-1}(E)=0$, $r=\ch_0(E)>0$, and that the torsion subsheaf $E_{tf}$ of $E$ lies in $\Ftors$. Write $\ch_1(E_{tors})=\sum_{i,j}b_{i,j}C_{i,j}$ and $\ch_1^\beta(E_{tf})=f^\ast D + \sum_{i,j} a_{i,j} C_{i,j}$. Then $\norm{\sum_{i,j}b_{i,j}C_{i,j}} \leq PM\norm{\sum_{i,j} a_{i,j}C_{i,j}} + rMN_1$.

\item[(F2)] Assume that $E \in \overline{\Pslicing}_{\beta, f^\ast\eta}((0, 1))$ has $H^{-1}(E)=0$ and $\ch_0(E)=0$. Write $\ch_1(E)=\Gamma + \sum_{i,j}b_{i,j}C_{i,j}$, where $\Gamma$ is effective and without components in $\Exc(f)$. Then $\norm{\sum_{i,j}b_{i,j}C_{i,j}} \leq L M \sqrt{\frac{A}{\eta^2}} (f^\ast\eta.\ch_1(E))$. 

\item[(F5)] Assume that $E \in \overline{\Pslicing}_{\beta, f^\ast\eta}((0, 1))$ has $H^{-1}(E)$ torsion-free of rank $r>0$, and that $\Supp H^0(E) \subseteq \Exc(f)$. Write $\ch_1(H^0(E)) = \sum_{i,j} b_{i,j}C_{i,j}$ and $\ch_1^\beta(H^{-1}(E))=f^\ast D + \sum_{i,j}a_{i,j} C_{i,j}$. We have that $\norm{\sum_{i,j}b_{i,j}C_{i,j}} \leq PM\norm{\sum_{i,j} a_{i,j}C_{i,j}} + rMN_5$.
\end{enumerate}
\end{cor}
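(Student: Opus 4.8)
The plan is to deduce all three inequalities from Lemma~\ref{lemma:bounds_ch1}, the one extra input being the intersection theory of a chain, which collapses the unwieldy right-hand side of that lemma into a telescoping expression. In each of the three cases the class $\sum_{i,j}b_{i,j}C_{i,j}$ that we must bound is the exceptional part of $\ch_1$ of a genuine torsion sheaf supported (effectively) on $\Exc(f)$: for (F1) the torsion subsheaf lies in $\Ftors$, hence on $\Exc(f)$ by Lemma~\ref{lemma:idheart_pointsonT}, so there is no stray effective piece $\Gamma$; for (F5) the sheaf $H^0(E)$ is already supported on $\Exc(f)$ up to points; and for (F2) we retain the effective piece $\Gamma$. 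In particular all $b_{i,j}\ge 0$, which is what makes the final norm comparison legitimate. I would first write $\ch_1(E)$ as the sum of its pullback part $f^\ast D$, the rank contribution $\ch_0(E)\,\beta$, and the exceptional coefficients $\sum_b a_{i,b}C_{i,b}$, $\sum_b b_{i,b}C_{i,b}$; since $f^\ast D.C_{i,a}=0$, intersecting with $C_{i,j}+\dots+C_{i,j'}$ only sees the exceptional coefficients and $\ch_0(E)\,\beta$.

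The first key step is the chain computation. For a chain one has $(C_{i,j}+\dots+C_{i,j'}).C_{i,a}=2-n_{i,a}$ at an interior index, $1-n_{i,a}$ at each endpoint of the subchain, and $1$ at the neighbours $C_{i,j-1},C_{i,j'+1}$. Substituting this and \emph{adding} the combination $b_{i,j}+\sum_{a=j}^{j'}(n_{i,a}-2)b_{i,a}$ appearing on the right of Lemma~\ref{lemma:bounds_ch1}, every interior term cancels and one is left with $-b_{i,j'}+b_{i,j-1}+b_{i,j'+1}$ (the variant of the lemma using $b_{i,j'}$ instead of $b_{i,j}$ produces $-b_{i,j}+b_{i,j-1}+b_{i,j'+1}$, and one uses whichever is convenient for telescoping from the appropriate end). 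Feeding in the rank term and using the hypotheses on $\beta$ to eliminate $\sum_a k_{i,a}$, the lemma becomes, in case (F1),
\[ b_{i,j'}-b_{i,j-1}-b_{i,j'+1}\ \le\ \Big(C_{i,j}+\dots+C_{i,j'}\Big).\sum_b a_{i,b}C_{i,b}\ +\ r\Big(\tfrac12\sum_{a=j}^{j'}n_{i,a}-(j'-j)\Big), \]
and in (F5) the same bound with the opposite sign in front of the intersection term. This sign flip is forced by $\ch_0(E)=-r$ in (F5) versus $\ch_0(E)=+r$ in (F1); correspondingly, (F1) invokes the extra hypothesis Condition~\ref{cond:bounds_smallkij}, whereas (F5) only needs the automatic inequality \eqref{eq:bounds_presmallkij}. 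In (F2) both the rank and the $\beta$-term vanish, leaving $b_{i,j'}-b_{i,j-1}-b_{i,j'+1}\le (C_{i,j}+\dots+C_{i,j'}).\Gamma$.

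The second step is to convert these into a bound on each $b_{i,m}$ by telescoping. Specializing to $j=1$ (so $b_{i,0}=0$) and summing from $j'=r_i$, where $b_{i,r_i+1}=0$, downwards, one gets $b_{i,m}\le \sum_{j'=m}^{r_i}h_{j'}$ with $h_{j'}$ the right-hand side of the displayed inequality for the subchain $[1,j']$. Summing over $m$ and over the connected components, each index $j'$ is counted with multiplicity $j'$, so the rank contributions assemble exactly into $rN_1$ in (F1) and $rN_5$ in (F5), while the intersection contributions assemble into $W.\sum_{i,b}a_{i,b}C_{i,b}$, where $W=\sum_{i,a}a\,(C_{i,1}+\dots+C_{i,a})$ is the fixed exceptional class with $\norm{W}=P$; this is controlled by $P$ and $\norm{\sum_b a_{i,b}C_{i,b}}$ via Cauchy--Schwarz for the negative-definite intersection form. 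In (F2) the intersection terms are bounded instead by Lemma~\ref{lemma:bounds_effectivenotExc}(ii), giving $(C_{i,j}+\dots+C_{i,j'}).\Gamma\le (j'-j+1)\sqrt{A/\eta^2}\,(f^\ast\eta.\ch_1(E))$, and the combinatorics of the double sum produce the factor $L$. Finally I would use the comparison $\norm{\sum_{i,j}b_{i,j}C_{i,j}}\le M\sum_{i,j}b_{i,j}$ from \eqref{eq:bounds_norm}, legitimate because $b_{i,j}\ge0$, to pass from $\sum b_{i,j}$ to $\norm{\sum b_{i,j}C_{i,j}}$ and read off the three stated inequalities.

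The main obstacle is entirely bookkeeping rather than conceptual. One must keep the sign conventions straight---the rank sign is what separates (F1) from (F5) and decides which half of the $\beta$-hypotheses to use---verify that the telescoping is valid by confirming that the relevant sheaf is torsion and effectively supported (so that all $b_{i,j}\ge0$), and check that the triple sums over subchains $[j,j']$ reproduce exactly the constants $N_1$, $N_5$, $L$, and $P$ as defined just before the statement. No single step is deep, but assembling them into the closed-form bounds requires patience with the combinatorics of the chain.
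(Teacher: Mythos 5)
Your proposal is correct and follows essentially the same route as the paper's proof: apply Lemma \ref{lemma:bounds_ch1} to the subchains $C_{i,1}\cup\dots\cup C_{i,j'}$, let the chain intersection numbers cancel the right-hand side into telescoping inequalities $b_{i,j'}-b_{i,j'+1}\le h_{j'}$, sum from the free end of each chain, and conclude with Cauchy--Schwarz via $P$ and $M$ (resp.\ Lemma \ref{lemma:bounds_effectivenotExc}(ii) and $L$ in case (F2)), with the same sign analysis dictating that (F1) needs an upper bound on $\beta.(C_{i,j}+\dots+C_{i,j'})-\sum_{a=j}^{j'}k_{i,a}$ while (F5) only needs the automatic bound \eqref{eq:bounds_presmallkij}. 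The one quibble is your claim that the rank terms in (F1) assemble \emph{exactly} into $rN_1$: your own displayed inequality telescopes to the $N_5$-type constant $r\sum_{j'}j'\bigl(\tfrac12\sum_{a\le j'}n_{i,a}-(j'-1)\bigr)$ rather than $rN_1$ --- but the paper's proof of (F1) commits the identical slippage in its intermediate display, and the discrepancy is harmless since every such constant is absorbed into $N=\max\{3,N_1,N_5,\dots\}$ in Subsection \ref{subsec:wrapping}.
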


\begin{proof}
We will only prove (F1); the other two follow in a similar fashion. To do so, let us apply the bound of Lemma \ref{lemma:bounds_ch1} for $j=1$. We get
\begin{align*}
b_{i,r_i} + r(k_{i,1}+\dots+k_{i,r_1}) &\leq \ch_1(E_{tf}) (C_{i,1}+\dots+C_{i,r_i}), \\
b_{i,j'} + r(k_{i,1}+\dots+k_{i,j'}) &\leq b_{i,j'+1} + \ch_1(E_{tf}) . (C_{i,1}+\dots+C_{i,j'}), 
\end{align*}
for $j' =1, \dots, r_i-1$. Adding these equations together with \eqref{eq:bounds_presmallkij} yields
\[ b_{i, j} \leq \ch_1^\beta(E_{tf})\sum_{a=j}^{r_i}(C_{i,1}+\dots+C_{i,a}) + \frac{r}{2} \sum_{a=j}^{r_i} n_{i,a}. \]
This gives us the bound
\[ \sum_{i,j} \abs{b_{i,j}} \leq \left( \sum_{i,j}a_{i,j} C_{i,j} \right)\left( \sum_{i,j}\sum_{a=j}^{r_i}(C_{i,1}+\dots+C_{i,a}) \right) + rN_1. \]
At last, the first term is bounded by $PM\sum_{i,j}\abs{a_{i,j}}$ using Cauchy--Schwarz.
\end{proof}

\begin{remark}
Let us point out that our argument is recursive, bounding $b_{i,j}$ in terms of $b_{i, j+1}$. A symmetrical argument can be perform to bound $b_{i, j}$ with $b_{i, j-1}$, which can be used to slightly improve the bounds on Corollary \ref{cor:bounds_F125}. These improvements are inconsequential for proving the support property. 

One could ask if we can remove the assumption on $\beta$ imposed in Corollary \ref{cond:bounds_smallkij}. For instance, we could try to prove a version of Lemma \ref{lemma:bounds_ch1} that does not involve Lemma \ref{lemma:bounds_objectsinTF}, at least not the ``strong'' result that $\O_{C_{i,j,j'}}(k_j, \dots, k_{j'}) \in \Ttors$. It is true that, e.g., $\O_{C_{i,j,j'}}(k_j, k_{j+1}+1, \dots, k_{j'}+1) \in \Ttors$ independently of the bound in Condition \ref{cond:bounds_smallkij}. However, this is simply not enough to get the extra terms while proving Corollary \ref{cor:bounds_F125}. 

Small improvements can be done, e.g., when each $r_i$ is at most two, to prove a version of Lemma \ref{lemma:bounds_ch1} without invoking Condition \ref{cond:bounds_smallkij}. We have omitted them from our exposition for the sake of clarity.
\end{remark}

\subsection{Putting all together} \label{subsec:wrapping}

We are finally ready to prove the support property for the pre-stability conditions $\overline{\sigma}_{\beta, f^\ast\eta}$. We start by considering the quadratic form
\begin{equation}
\begin{aligned}
Q_0(E) &= (f_\ast\ch_1^\beta(E))^2 + \epsilon(\ch_1^\beta(E)-f^\ast f_\ast \ch_1^\beta(E))^2 - 2\ch_0(E)\ch_2^\beta(E) \\
&+ \frac{B}{\eta^2}(f^\ast\eta.\ch_1^\beta(E))^2
\end{aligned}
\end{equation}
for some $\epsilon>0$ and $B>0$. Note that this is negative definite on the kernel of $Z_{\beta, sf^\ast\eta}$ for all $s>0$, by an application of the Hodge index theorem.

We now pick $\epsilon$ and $B$ as follows. We recall the constants $M, A, N_1, N_5, P, L, \delta$ from Subsection \ref{subsec:bounds}. Set $N= \max \{3, N_1, N_5, (\delta P)^5, (M/2\delta)^5\}$, $\epsilon = 1/N^3$, and $B=A(1+\epsilon L^2M^2)$. Let us point out that $0<\epsilon<1/3$, by the choice of $N$.

\begin{claim} \label{claim:bounds_choiceofcts}
The quadratic form $\left(1- \epsilon \right)x^2-\left( \frac{2\delta P}{N} +2\epsilon \right) xy + \left( \frac{2\delta}{MN}-\epsilon \right)y^2$ on the variables $x, y$, is positive definite. 
\end{claim}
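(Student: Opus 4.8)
The plan is to apply the classical criterion that a real binary form $q(x,y)=a x^2 + b xy + c y^2$ is positive definite if and only if $a>0$ and $4ac-b^2>0$ (positive diagonal and positive determinant of the associated symmetric matrix). Here
\[ a = 1-\epsilon, \qquad b = -\left(\frac{2\delta P}{N}+2\epsilon\right), \qquad c = \frac{2\delta}{MN}-\epsilon, \qquad \epsilon=\frac{1}{N^3}, \]
so all three conditions become inequalities in $N$. The positivity $a>0$ is immediate from $0<\epsilon<1/3$. For $c>0$ I would rewrite the requirement as $2\delta/(MN)>1/N^3$, i.e. $N^2>M/(2\delta)$, and split into the cases $M/(2\delta)\ge 1$ (where $N\ge (M/2\delta)^5$ gives $N^2\ge (M/2\delta)^{10}>M/(2\delta)$) and $M/(2\delta)<1$ (where $N\ge 3$ gives $N^2\ge 9>M/(2\delta)$). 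Both use only the two ingredients $N\ge 3$ and $N\ge (M/2\delta)^5$ appearing in the definition of $N$.

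The substantial point is the determinant inequality $4ac>b^2$, which I would establish by a decay-rate estimate in $N$. The two defining lower bounds $N\ge(M/2\delta)^5$ and $N\ge(\delta P)^5$ translate into $2\delta/M\ge N^{-1/5}$ and $\delta P\le N^{1/5}$, whence
\[ \frac{2\delta}{MN}\ge N^{-6/5}, \qquad \frac{2\delta P}{N}\le 2N^{-4/5}. \]
Combining these with $\epsilon=N^{-3}$ gives a lower bound $4ac\gtrsim N^{-6/5}$ for the positive term and an upper bound $b^2\lesssim N^{-8/5}$ for the negative term, the $\epsilon$-corrections in each being of strictly smaller order. Since $N^{-6/5}$ decays strictly slower than $N^{-8/5}$, the positive term dominates and the inequality holds for every admissible $N$.

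To make this rigorous I would factor out $N^{-6/5}$ and reduce $4ac-b^2>0$ to showing that
\[ g(N)=4\bigl(1-N^{-3}\bigr)\bigl(1-N^{-9/5}\bigr)-N^{-2/5}\bigl(2+2N^{-11/5}\bigr)^2 \]
is positive on $[3,\infty)$. The first summand is increasing in $N$ (a product of two increasing positive factors) and the subtracted term is decreasing (a product of two decreasing positive factors), so $g$ is increasing and it suffices to check $g(3)>0$; a direct numerical evaluation gives $g(3)\approx 0.26>0$. The main obstacle is precisely this tightness: the two decay exponents differ by only $2/5$, so the margin at the boundary value $N=3$ is small (the honest difference $4ac-b^2$ at $N=3$ is on the order of $0.07$), and I would therefore keep every numerical constant explicit rather than absorbing it into $\gtrsim/\lesssim$, verifying the worst case in which both bounds $\delta P= N^{1/5}$ and $M/(2\delta)=N^{1/5}$ are simultaneously saturated. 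No other difficulty arises, since the negativity of the coefficients' interplay is entirely captured by this one scalar inequality.
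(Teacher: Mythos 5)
Your proof is correct, and it is worth noting that the paper itself offers \emph{no} proof of this Claim: it is asserted immediately after the constants $N = \max\{3, N_1, N_5, (\delta P)^5, (M/2\delta)^5\}$ and $\epsilon = 1/N^3$ are fixed, evidently regarded as a routine consequence of that choice. You supply the verification the paper omits, and your route is the natural one: the discriminant criterion $a>0$, $4ac-b^2>0$; the translation of the defining inequalities $N \geq (M/2\delta)^5$ and $N \geq (\delta P)^5$ into the power bounds $2\delta/(MN) \geq N^{-6/5}$ and $2\delta P/N \leq 2N^{-4/5}$ (both immediate by taking fifth roots, no case split needed); factoring out $N^{-6/5}$; and the monotonicity of $g(N)=4(1-N^{-3})(1-N^{-9/5})-N^{-2/5}(2+2N^{-11/5})^2$ (increasing minus decreasing), which reduces everything to the single evaluation $g(3) \approx 0.26 > 0$ — I confirm this value. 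Your observation that the margin is thin (the exponents $-6/5$ and $-8/5$ differ by only $2/5$, so the exponent $5$ in the definition of $N$ cannot be lowered much) is a genuinely useful remark about why the paper's constants are what they are. Two trivial points: your case analysis for $c>0$ contains the inequality $(M/2\delta)^{10} > M/(2\delta)$, which fails at the boundary value $M/(2\delta)=1$ (covered there by $N\geq 3$, so nothing breaks); and that separate argument is in any case redundant, since your later bound $c \geq N^{-6/5}(1-N^{-9/5}) > 0$ already gives positivity of $c$.
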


Let us point out that these constants are computed explicitly just to highlight the fact that they depend only on $f\colon S \to T$ and the constants previously chosen---not on the object $E$ we will pick below.

\begin{lemma} \label{lemma:wrapping_alwaysss}
Let $E \in \overline{\Pslicing}_{\beta, f^\ast\eta}((0, 1])$ be an object that is $\sigma_s$-semistable for all $s \gg 0$. Assume that $E$ has phase $\neq 1$ for some (hence for all) $s$. Then $Q_0(E)\geq 0$. 
\end{lemma}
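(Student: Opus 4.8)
The plan is to combine the classification of Lemma \ref{lemma:bridgie_main} with the bounds assembled in Subsection \ref{subsec:bounds}. Since $E$ has phase $\neq 1$, it cannot be of type (F3) or (F4), so it is of one of the types (F1), (F2), (F5). In every case I would use the orthogonal decomposition $\NS(S)_\R = f^\ast\NS(T)_\R \oplus \bigoplus_{i,j}\R C_{i,j}$ to split $\ch_1^\beta(E) = \gamma_+ + \gamma_-$ with $\gamma_+ = f^\ast f_\ast\ch_1^\beta(E)$ and $\gamma_-$ the exceptional part, so that $(f_\ast\ch_1^\beta(E))^2 = \gamma_+^2$, $(f^\ast\eta.\ch_1^\beta(E))^2 = (f^\ast\eta.\gamma_+)^2$ (as $\gamma_-$ is $f^\ast\eta$-orthogonal), and $\gamma_-^2 = -\norm{\gamma_-}^2$. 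Thus the only negative contribution to $Q_0(E)$ is the term $-\epsilon\norm{\gamma_-}^2$, which must be dominated by the remaining ones.

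The cases (F1) and (F5) are the heart of the matter and I would treat them in parallel. Writing the torsion-free $f^\ast\eta$-semistable summand with $\ch_1^\beta = f^\ast D + \sum_{i,j} a_{i,j}C_{i,j}$ and the $\Exc(f)$-supported torsion summand with first Chern class $\sum_{i,j} b_{i,j}C_{i,j}$, applying the Bogomolov--Gieseker inequality \cite{GKP16}*{5.1} to the torsion-free summand (after expanding $\ch_1^\beta = f^\ast D + \sum_{i,j}a_{i,j}C_{i,j}$) contributes a lower bound $-D^2 + \norm{\sum_{i,j}a_{i,j}C_{i,j}}^2$ to $Q_0(E)$, whose $-D^2$ cancels $\gamma_+^2 = D^2$. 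Applying Corollary \ref{cor:bounds_onExc} to the torsion summand (which lies in $\Ftors$ for (F1) and in $\Ttors$ for (F5)) contributes $2r\delta\norm{\sum_{i,j}b_{i,j}C_{i,j}}$, where $r = \abs{\ch_0(E)}>0$. Setting $x=\norm{\sum_{i,j} a_{i,j}C_{i,j}}$ and $y = \norm{\sum_{i,j} b_{i,j}C_{i,j}}$ and using $\norm{\gamma_-}\leq x+y$, these estimates combine to
\[ Q_0(E) \geq (1-\epsilon)x^2 - 2\epsilon xy - \epsilon y^2 + 2r\delta y + \frac{B}{\eta^2}(\eta.D)^2. \]

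The decisive step is to convert the rank-linear term $2r\delta y$ into a quadratic one. For this I would invoke Corollary \ref{cor:bounds_F125} in the weakened form $y \leq PM x + rMN$, valid because $N \geq \max\{N_1, N_5\}$; rearranged this reads $r \geq (y - PMx)/(MN)$, and multiplying by $2\delta y \geq 0$ yields
\[ 2r\delta y \geq \frac{2\delta}{MN}y^2 - \frac{2\delta P}{N}xy \]
with no case distinction needed. Substituting and discarding the nonnegative term $\frac{B}{\eta^2}(\eta.D)^2$ leaves exactly the quadratic form
\[ (1-\epsilon)x^2 - \left(\frac{2\delta P}{N} + 2\epsilon\right)xy + \left(\frac{2\delta}{MN} - \epsilon\right)y^2, \]
which is nonnegative by Claim \ref{claim:bounds_choiceofcts}. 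This disposes of (F1) and (F5).

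Finally, case (F2), with $\ch_0(E)=0$, is simpler. Here I would write $\ch_1(E) = \Gamma + \sum_{i,j} b_{i,j}C_{i,j}$ with $\Gamma$ effective and having no components in $\Exc(f)$, absorb $\gamma_+^2$ into $\frac{A}{\eta^2}(f^\ast\eta.\ch_1(E))^2$ via Lemma \ref{lemma:bounds_effectivenotExc}(iii), and then dominate $\epsilon\norm{\gamma_-}^2$ by the remaining surplus $\frac{B-A}{\eta^2}(f^\ast\eta.\ch_1(E))^2$ using Lemma \ref{lemma:bounds_effectivenotExc}(i) together with the (F2) bound of Corollary \ref{cor:bounds_F125}; this is precisely the reason for choosing $B = A(1+\epsilon L^2 M^2)$. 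I expect the main obstacle to be the bookkeeping in converting the rank-linear contribution into the exact quadratic form of Claim \ref{claim:bounds_choiceofcts}, including checking the orthogonality simplifications and the sign of $\ch_0(E)$ in each case; the remaining work is assembling inequalities already established in Subsection \ref{subsec:bounds}.
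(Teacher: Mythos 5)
Your treatment of (F1) and (F5) is correct and is essentially the paper's own argument (the paper writes out only (F1) and declares the remaining cases similar): the same Bogomolov--Gieseker step for the torsion-free part, the same use of Corollary \ref{cor:bounds_onExc} for the torsion part (in $\Ftors$ for (F1), in $\Ttors$ for (F5), with the sign of $\ch_0(E)$ making both contributions positive), the same conversion of the rank-linear term via Corollary \ref{cor:bounds_F125}, and the same conclusion via Claim \ref{claim:bounds_choiceofcts}.

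The (F2) case, however, has a genuine gap as you describe it. For a torsion sheaf $E$ with $\ch_1(E) = \Gamma + \sum_{i,j} b_{i,j}C_{i,j}$, the exceptional part $\gamma_- = \ch_1(E) - f^\ast f_\ast\ch_1(E)$ is \emph{not} $\sum_{i,j}b_{i,j}C_{i,j}$: it equals $\sum_{i,j}b_{i,j}C_{i,j} + \left(\Gamma - f^\ast f_\ast\Gamma\right)$, and the second summand is a nonzero (anti-effective) exceptional class whenever $\Gamma$ meets $\Exc(f)$. Corollary \ref{cor:bounds_F125}(F2) controls only $\norm{\sum_{i,j}b_{i,j}C_{i,j}}$, and Lemma \ref{lemma:bounds_effectivenotExc}(i) controls $\norm{\Gamma}$, not $\norm{\Gamma - f^\ast f_\ast\Gamma}$; since the surplus $\frac{B-A}{\eta^2}(f^\ast\eta.\ch_1(E))^2 = \epsilon L^2M^2\frac{A}{\eta^2}(f^\ast\eta.\ch_1(E))^2$ is calibrated to absorb exactly $\epsilon\norm{\sum_{i,j} b_{i,j}C_{i,j}}^2$, your proposed domination of $\epsilon\norm{\gamma_-}^2$ does not close with the given value of $B$.

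The fix stays inside the paper's toolkit, but requires reorganizing the algebra so that $\Gamma - f^\ast f_\ast\Gamma$ never needs to be estimated. Using orthogonality of $f^\ast f_\ast\ch_1(E)$ and $\gamma_-$, write
\[ (f_\ast\ch_1(E))^2 + \epsilon\,\gamma_-^2 \;=\; (1-\epsilon)\,(f_\ast\ch_1(E))^2 + \epsilon\,\ch_1(E)^2. \]
Then $(f_\ast\ch_1(E))^2 = (f^\ast f_\ast\Gamma)^2 = \Gamma^2 + \norm{\Gamma - f^\ast f_\ast\Gamma}^2 \geq \Gamma^2$, while effectivity ($b_{i,j}\geq 0$ and $\Gamma.C_{i,j}\geq 0$) gives
\[ \ch_1(E)^2 \;\geq\; \Gamma^2 - \norm{\textstyle\sum_{i,j} b_{i,j}C_{i,j}}^2. \]
Combining these with Lemma \ref{lemma:bounds_effectivenotExc}(iii) applied to $\Gamma$ and with Corollary \ref{cor:bounds_F125}(F2) yields
\[ Q_0(E) \;\geq\; \Gamma^2 - \epsilon\norm{\textstyle\sum_{i,j} b_{i,j}C_{i,j}}^2 + \frac{B}{\eta^2}(f^\ast\eta.\ch_1(E))^2 \;\geq\; \frac{B - A(1+\epsilon L^2 M^2)}{\eta^2}(f^\ast\eta.\ch_1(E))^2 \;=\; 0, \]
which is exactly why $B = A(1+\epsilon L^2M^2)$ suffices.
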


To prove this we will use the classification of Lemma \ref{lemma:bridgie_main}. Recall here that only the objects of (F1), (F2) and (F5) have slope smaller than one. We will prove this for the objects in (F1); the other two cases are handled similarly.

\begin{proof}[Proof of Lemma \ref*{lemma:wrapping_alwaysss}, (F1)]
Let us consider an object $E \in \overline{\Pslicing}_{\beta, f^\ast\eta}((0,1])$ as in Lemma \ref{lemma:bridgie_main} (F1). This way, we have that $H^{-1}(E)=0$, that $E=H^0(E)$ that is $f^\ast\eta$-semistable, whose torsion subsheaf lies in $\Ftors$. Let $E_{tors} \subseteq E$ be the torsion subsheaf of $E$, and let $E_{tf}=E/E_{tors}$, fitting into the short exact sequence $0 \to E_{tors} \to E \to E_{tf} \to 0$. Let us write $\ch_1^\beta(E_{tf}) = f^\ast D + \sum_{i,j} a_{i,j}C_{i,j}$ and $\ch_1(E_{tors}) = \sum_{i,j} b_{i,j}C_{i,j}$. Write also $r=\ch_0(E)=\ch_0(E_{tf})$. 

To start, the Bogomolov--Gieseker inequality applied to $E_{tf}$ implies
\begin{equation} \label{eq:wrapping_F1BG}
(f_\ast\ch_1^\beta(E_{tf}))^2 + (\ch_1^\beta(E_{tf})-f^\ast f_\ast \ch_1^\beta(E_{tf}))^2 - 2r \ch_2^\beta(E_{tf}) \geq 0.
\end{equation}
Here $f_\ast \ch_1^\beta(E_{tf})=f_\ast \ch_1^\beta(E)$, as $f_\ast \ch_1^\beta(E_{tors})=0$. Using this together with \eqref{eq:wrapping_F1BG} yields
\begin{align*}
& (f_\ast\ch_1^\beta(E))^2 + \epsilon (\ch_1^\beta(E)-f^\ast f_\ast \ch_1^\beta(E))^2 \\
\geq& 2r\ch_2^\beta(E) -2r\ch_2^\beta(E_{tors}) -(1-\epsilon)(\ch_1^\beta(E_{tf})-f^\ast f_\ast \ch_1^\beta(E_{tf}))^2 \\
&+ \epsilon\ch_1^\beta(E_{tors})^2 +2 \epsilon (\ch_1^\beta(E_{tf})-f^\ast f_\ast \ch_1^\beta(E_{tf})) \ch_1^\beta(E_{tors}).
\end{align*}
The second term can be bounded by Corollary \ref{cor:bounds_onExc} (as $E_{tors} \in \Ftors$), while the other three can be written using \eqref{eq:bounds_norm} (and Cauchy--Schwarz for the last one). This yields
\begin{equation} \label{eq:bounds_pfF1-1}
\begin{aligned}
\geq& 2r\ch_2^\beta(E) + 2r\delta \norm{\sum_{i,j}b_{i,j}C_{i,j}} +(1-\epsilon)\norm{\sum_{i,j}a_{i,j}C_{i,j}}^2 - \epsilon \norm{\sum_{i,j}b_{i,j}C_{i,j}}^2 \\
&-2\epsilon \norm{\sum_{i,j}a_{i,j}C_{i,j}} \norm{\sum_{i,j}b_{i,j}C_{i,j}}.
\end{aligned}
\end{equation}
Let us focus on the second term. By using Corollary \ref{cor:bounds_F125} (together with $N_1 \leq N$) we have
\[ r \geq \frac{1}{MN} \norm{\sum_{i,j} b_{i,j}C_{i,j}} - \frac{P}{N} \norm{\sum_{i,j}a_{i,j}C_{i,j}}. \]
Replacing this in \eqref{eq:bounds_pfF1-1}, together with Claim \ref{claim:bounds_choiceofcts}, gives $Q_0(E) \geq \frac{B}{\eta^2}(f^\ast\eta.\ch_1^\beta(E))^2$, which is clearly non-negative.
\end{proof}

\begin{cor} \label{cor:wrapping_supportnot1}
Let $E \in \overline{\Pslicing}_{\beta, f^\ast\eta}((0, 1])$ be an object that is $\overline{\sigma}_{\beta, f^\ast\eta}$-semistable of phase $\neq 1$. Then $Q_0(E)\geq 0$. 
\end{cor}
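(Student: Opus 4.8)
The plan is to propagate the bound $Q_0 \geq 0$ from the large-volume limit, where it is already known by Lemma \ref{lemma:wrapping_alwaysss}, down to $s = 1$ along the path $s \mapsto \sigma_s$ by a wall-crossing argument. The single algebraic mechanism I would isolate first is the convexity of the region $\{Q_0 \geq 0\}$ along a fixed phase: since $Q_0$ is negative definite on $\ker Z_{\beta, sf^\ast\eta}$ for every $s > 0$ (as noted just after the definition of $Q_0$), whenever an object is strictly $\sigma_s$-semistable with $\sigma_s$-stable Jordan--H\"older factors $A_1, \dots, A_m$, the classes $Z_{\beta, sf^\ast\eta}(A_1), \dots, Z_{\beta, sf^\ast\eta}(A_m)$ all lie on one ray $\R_{>0} \cdot e^{i\pi\phi}$, and the inequalities $Q_0(A_i) \geq 0$ for all $i$ force $Q_0\bigl(\sum_i v(A_i)\bigr) \geq 0$; cf. \cite{Bay19}. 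I would record this as a short preliminary, its proof being the usual Lorentzian computation on the plane spanned by two factors (where $Q_0$ has signature at most $(1,1)$ and both factors lie in the same component of $\{Q_0 \geq 0\}$), iterated over the filtration.

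Next I would observe that the condition ``phase $\neq 1$'' is independent of $s$. Indeed, for $E$ in the heart $\overline{\Pslicing}_{\beta, f^\ast\eta}((0,1])$ one has $\Im Z_{\beta, sf^\ast\eta}(E) = s\,(f^\ast\eta . \ch_1^\beta(E))$, whose sign does not depend on $s > 0$. Hence an object of phase $\neq 1$ has phase $\neq 1$ for all $s$, and so does every $\sigma_s$-semistable subquotient arising on a wall, since it shares the phase of $E$ there. In particular any factor that remains semistable into the large-volume chamber still has phase $\neq 1$, so that Lemma \ref{lemma:wrapping_alwaysss} applies to it.

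With these in hand, I would argue as follows. By the locally finite wall-and-chamber decomposition (the corollary following Lemma \ref{lemma:weaksupp_main}, cf. \cite{Tod13}*{3.19}), for a fixed numerical class there are only finitely many walls in $[1, \infty)$. Given a $\overline{\sigma}_{\beta, f^\ast\eta}$-semistable object $E$ of phase $\neq 1$, either $E$ is $\sigma_s$-semistable for all $s \gg 0$, in which case $Q_0(E) \geq 0$ directly by Lemma \ref{lemma:wrapping_alwaysss}, or there is a smallest $w \geq 1$ at which $E$ is strictly $\sigma_w$-semistable and destabilized just above. At such $w$ the Jordan--H\"older factors $A_i$ are $\sigma_w$-stable of the same (hence $\neq 1$) phase, remain semistable for $s$ slightly above $w$, and satisfy $\lvert Z_{\beta, wf^\ast\eta}(A_i)\rvert < \lvert Z_{\beta, wf^\ast\eta}(E)\rvert$; applying the statement inductively to each $A_i$ gives $Q_0(A_i) \geq 0$, and the convexity mechanism above yields $Q_0(E) \geq 0$. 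The cases (F2) and (F5) of Lemma \ref{lemma:bridgie_main} enter only through Lemma \ref{lemma:wrapping_alwaysss} in the large-volume chamber, so no further work beyond that lemma is needed here.

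The step I expect to be the main obstacle is making this induction genuinely well-founded: the factors $A_i$ have numerical classes different from that of $E$, so one cannot simply induct on the walls of a single class. I would resolve this by running the induction on the mass $\lvert Z_{\beta, sf^\ast\eta}(-)\rvert$ at the relevant wall and invoking the weak support property of Lemma \ref{lemma:weaksupp_main} together with the discreteness of the image lattice of $\ch$: these bound the mass of a semistable object from below by a positive multiple of the norm of its class, so that the strictly decreasing masses of successive factors cannot descend indefinitely and the recursion terminates. Verifying this termination carefully, and checking that the stable factors indeed persist into the adjacent chamber, are the only delicate points; the remainder is formal wall-crossing bookkeeping.
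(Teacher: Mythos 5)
Your overall skeleton is exactly the paper's proof: deform along the ray $s \mapsto \sigma_s$, use the convexity mechanism ($Q_0$ negative definite on $\ker Z_{\beta, sf^\ast\eta}$ plus Jordan--H\"older factors on a common ray, i.e. \cite{MS17}*{5.29}), observe that phase $\neq 1$ is an $s$-independent condition, and use Lemma \ref{lemma:wrapping_alwaysss} as the base case in the large-volume chamber. The paper's proof is precisely this, following \cite{Tod13}*{3.23}. The issue lies in the step you yourself flagged as the main obstacle: your termination argument does not work as stated. First, the masses $\abs{Z_{\beta, sf^\ast\eta}(-)}$ measured ``at the relevant wall'' do not form a decreasing sequence along a branch of the recursion: a factor $A_i$ has smaller mass than $E$ only when both are measured at the \emph{same} wall $w$, and when $A_i$ is later destabilized at a wall $w' > w$ its mass at $w'$ (where $\Im Z$ has been rescaled by $w'/w$ and $\Re Z$ contains the term $\tfrac{(w')^2\eta^2}{2}\ch_0$) can well exceed its mass at $w$, so there is no single scale in which the quantities decrease. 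Second, the lower bound you invoke --- mass of a semistable object bounded below by a positive multiple of the norm of its full class --- \emph{is} the support property being established; Lemma \ref{lemma:weaksupp_main} only gives such control with respect to the restricted lattice $(\ch_0, f^\ast\eta.\ch_1^\beta, \ch_2^\beta)$, not the full Chern character, so as written the termination argument is circular. A related mismatch: you apply the corollary inductively to the factors $A_i$, but these are only known to be semistable at $s = w$, not at $s = 1$, so the statement being inducted must be the $s$-uniform one (``for every $s \geq 1$, every $\sigma_s$-semistable object of phase $\neq 1$ with smaller invariant satisfies $Q_0 \geq 0$'').

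The repair is what the paper does: run the induction on the single quantity $\Im Z_{\beta, f^\ast\eta}(E) = f^\ast\eta.\ch_1^\beta(E)$. This quantity is additive, independent of $s$ up to the overall factor $s$, strictly positive on every object of the heart of phase $\neq 1$, and takes values in a discrete set $\tfrac{1}{D}\Z_{\geq 0}$ because $\beta$ and $\eta$ are rational. At any wall the Jordan--H\"older factors of $E$ share its phase $\neq 1$, hence each has strictly smaller $f^\ast\eta.\ch_1^\beta$, and the recursion terminates after at most $D\cdot f^\ast\eta.\ch_1^\beta(E)$ steps with no appeal to mass bounds at all. With this substitution (and the $s$-uniform induction hypothesis) your argument coincides with the paper's; everything else in your proposal --- the convexity lemma, the $s$-independence of the phase-$\neq 1$ condition, and the reduction of the base case to Lemma \ref{lemma:wrapping_alwaysss} --- is correct.
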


\begin{proof}
We proceed as in \cite{Tod13}*{3.23} (cf. \cite{MS17}*{6.13}). Note that $Q_0$ is negative definite on $\ker \sigma_s$ for all $s$. This way, we proceed by induction on $\Im Z_{\beta, f^\ast\eta}(E)>0$. (Note that $\Im Z_{\beta, f^\ast\eta}$ takes discrete values, as $\beta, \eta$ have rational coefficients.) The base case is Lemma \ref{lemma:wrapping_alwaysss}, and the induction step follows by \cite{MS17}*{5.29}.
\end{proof}

We are ready to prove the support property.

\begin{proof}[Proof of Theorem \ref{teo:intro_existence}, support property]
We claim that the pre-stability condition $\overline{\sigma}_{\beta, f^\ast\eta}$ satisfies the support property with respect to the quadratic form
\[ Q(E) = Q_0(E) + \frac{\epsilon}{\delta^2}\Re Z_{\beta, f^\ast\eta}(E)^2. \]
Note that $Q$ agrees with $Q_0$ on $\ker Z_{\beta, f^\ast\eta}$, hence $Q$ is negative definite on it. Moreover, we have shown in Corollary \ref{cor:wrapping_supportnot1} that $Q_0(E) \geq 0$ if $E$ is $\overline{\sigma}_{\beta, f^\ast\eta}$-semistable of phase not equal to 1. It remains to see what happens for $\overline{\sigma}_{\beta, f^\ast\eta}$-semistable objects of phase $1$. 

We can reduce to the case where $E$ is stable of phase 1. In this case either $E=H^0(E)$ or $E=H^{-1}(E)[1]$. In the first base we have $E$ is torsion by Lemma \ref{lemma:hslope_torsIm0}. By taking the zero-dimensional subsheaf, we reduce it to the case where $E=\O_x$ or $E \in \Ttors$ is pure of dimension one. The first case has $Q(E) = \frac{\epsilon}{\delta^2}>0$. The second one follows immediately by Corollary \ref{cor:bounds_onExc}.

At last, if $E = H^{-1}(E)[1]$, we can apply Corollary \ref{cor:hslope_HNfactors} to assume $E$ is either torsion or torsion-free. In the first case it follows by Corollary \ref{cor:bounds_onExc}, while the second one is direct from the Bogomolov--Gieseker inequality.
\end{proof}

\bibliography{Half1.bbl}
\end{document}